\newcommand{\ot}{\otimes}
\newcommand{\Z}{\mathbb{Z}}
\newcommand{\C}{\mathbb{C}}
\newcommand{\A}{\mathbb{A}}
\renewcommand{\P}{\mathbb{P}}
\newcommand{\inv}{^{-1}}
\newcommand{\Hom}{\text{\rm Hom}}
\newcommand{\Mat}{\text{\rm Mat}}
\newcommand{\GL}{\text{\rm GL}}
\newcommand{\PSL}{\text{\rm PSL}}
\newcommand{\SL}{\text{\rm SL}}
\renewcommand{\O}{\mathcal{O}}
\newcommand{\cF}{\mathcal{F}}
\newcommand{\cM}{\mathcal{M}}
\newcommand{\QCoh}{\mathsf{QCoh}}
\newcommand{\Spec}{\text{\rm Spec}}
\newcommand{\Proj}{\text{\rm Proj}}
\newcommand{\Sym}{\text{\rm Sym}}
\newcommand{\dmod}{\text{\sf -mod}}
\newcommand{\End}{\text{\rm End}}
\newcommand{\D}{\mathcal{D}}
\newcommand{\g}{\mathfrak{g}}
\newcommand{\Ug}{U\mathfrak{g}}
\newcommand{\OG}{\O(G)}
\newcommand{\frakt}{\mathfrak{t}}
\newcommand{\fraku}{\mathfrak{u}}
\newcommand{\frakl}{\mathfrak{l}}
\newcommand{\uProj}{\text{\sf Proj}}
\newcommand{\uproj}{\text{\sf proj}}
\newcommand{\GIT}{\text{ $\!$/$\! \!$/$\!$}}
\newcommand{\uqg}{U_q(\g)}
\newcommand{\OqG}{\O_q(G)}
\newcommand{\cV}{\mathcal{V}}
\newcommand{\Vinb}{\mathbb{V}}
\newcommand{\VinbG}{\mathbb{V}_G}
\newcommand{\VinbSL}{\mathbb{V}_{\SL_2}}
\newcommand{\barPSL}{\overline{\PSL_2}}
\newcommand{\Gad}{G^\text{\rm ad}}
\newcommand{\Gbarad}{\overline{G^\text{\rm ad}}}
\newcommand{\Gadbar}{\overline{G^\text{\rm ad}}}
\newcommand{\gr}{\text{\rm gr}}
\newcommand{\Rees}{\text{\rm Rees}}
\newcommand{\ReesG}{\text{\rm Rees}(G)}
\newcommand{\Orb}{\text{\rm Orb}} 
\newcommand{\Lad}{L_I^{\text{\rm ad}}}
\newcommand{\fraksl}{\mathfrak{sl}}
\newcommand{\Uqsl}{U_q(\fraksl_2)}
\newcommand{\Oqsl}{\O_q(\SL_2)}
\newcommand{\VinbqSL}{\O_q(\Vinb_{SL_2})}
\newcommand{\OqVinb}{\O_q(\Vinb_G)}
\newcommand{\uqulu}{ U_q(\mathfrak{u} \times \mathfrak{l} \times \mathfrak{u}^-) }
\newcommand{\Grmod}{\text{\sf Grmod}}
\newcommand{\grmod}{\text{\sf grmod}}
\newcommand{\Tors}{\text{\sf Tors}}
\newcommand{\tors}{\text{\sf tors}}
\theoremstyle{plain}
\newtheorem{theorem}{Theorem}[section]
\newtheorem{prop}[theorem]{Proposition}
\newtheorem{defprop}[theorem]{Definition-Proposition}
\newtheorem{lemma}[theorem]{Lemma}
\newtheorem{cor}[theorem]{Corollary}
\theoremstyle{definition}
\newtheorem{definition}[theorem]{Definition}
\newtheorem{example}[theorem]{Example}
\newtheorem{rmk}[theorem]{Remark}
\newtheorem{notation}[theorem]{Notation}
\numberwithin{equation}{section}
\begin{document} 
\title{The Wonderful Compactification for Quantum Groups}
\author{Iordan Ganev}
\address{The University of Texas at Austin, Department of Mathematics, Austin, Texas, USA 78712}
\maketitle

\parskip = 0pt 

\begin{abstract} In this paper, we introduce a quantum version of the wonderful compactification of a group as a certain noncommutative projective scheme. Our approach stems from the fact that the wonderful compactification encodes the asymptotics of matrix coefficients, and from its realization as a GIT quotient of the Vinberg semigroup. In order to define the wonderful compactification for a quantum group, we adopt a generalized formalism of $\mathsf{Proj}$ categories in the spirit of Artin and Zhang. Key to our construction is a quantum version of the Vinberg semigroup, which we define as a $q$-deformation of a certain Rees algebra, compatible with a standard Poisson structure. Furthermore, we discuss quantum analogues of the stratification of the wonderful compactification by orbits for a certain group action, and provide explicit computations in the case of $\mathrm{SL}_2$. \end{abstract}

\tableofcontents

\section{Introduction}\label{sec:intro}

New geometric perspectives on familiar representation-theoretic objects often illuminate deep unifying structures among diverse phenomena, and lead to the resolution of long-standing algebraic problems. This paper synthesizes two distinct sources of insight in representation theory: (1) the study of a group `at infinity,' that is, of compactifications of a group, and (2) the role of quantum groups as symmetries in noncommutative geometry. The interaction of these perspectives leads to results on the asymptotics of quantum groups, and to a deeper understanding of various central objects in representation theory.

To place our work in context, we recall that a given semisimple algebraic group has a distinguished compactification, known as the `wonderful' compactification. It is a projective variety, introduced by de Concini and Procesi, that  links the geometry of the group with the geometry of its partial flag varieties and Levi subgroups \cite{DCP}. In addition, it captures the equivariant degenerations of the group and encodes the asymptotics of matrix coefficients. Related to the wonderful compactification of a group is the associated Vinberg semigroup, introduced by Vinberg \cite{Vinberg}. The latter is an affine variety that (generically) forms a multi-cone over the wonderful compactification. As such, the Vinberg semigroup can be regarded as a linear version of the wonderful compactification where various structures simplify, and where the rational degenerations of the group become more apparent. 

The wonderful compactification has recently emerged as a powerful tool in geometric representation theory. For example, in the setting of $p$-adic groups, Bezrukavnikov and Kazhdan have illustrated how the wonderful compactification leads to a geometric understanding of the second adjointness theorem \cite{BezKazhdan} (see also \cite{SekVenkatesh}). Drinfeld and Gaitsgory use the Vinberg semigroup in order to establish adjunctions that relate categories of $\D$-modules on certain moduli stacks of bundles; their results contribute to the geometric Langlands program \cite{DrinfeldGaitsgory}. The Vinberg semigroup is also crucial in the proof of a dimension formula for a group version of affine Springer fibers, due to Bouthier \cite{Bouthier}. The wonderful compactification has been used in the theory of character sheaves by several authors \cite{BFO, GinzburgAMSS, He, Lusztig:2006, Springer}, while Lu, Yakimov, and others have studied the Poisson geometry of the wonderful compactification and related varieties \cite{LY1, LY2}.

Another, similarly fruitful, source of new perspectives in representation theory is the study of noncommutative geometry emanating from quantum groups. Since the inception of quantum groups by Drinfeld and Jimbo in the 1980s, much work has been devoted to the construction of $q$-deformations of classical varieties in order to understand categories of representations. These constructions often take the form of $q$-deformations of algebras (i.e.\ as global rather than formal quantizations) and pivot on the structure of the quantum group as a $q$-deformation of the universal enveloping algebra of a Lie algebra. Just like quantum groups themselves, the $q$-deformations that appear in representation theory have remarkable connections to various other areas of mathematics. For example, there are strong parallels between the behavior of an object's $q$-deformation when $q$ is a root of unity and the geometry of the classical object in positive characteristic. 

The spirit of quantum geometric representation theory is embodied in work of Backelin and Kremnitzer on quantum flag varieties and differential operators, as well as related results by Lunts and Rosenberg and by Tanisaki \cite{BaKr, LR, Tanisaki}. Their work demonstrates that the category of quasicoherent sheaves on the flag variety of a reductive group admits a $q$-deformation that can be regarded as the category of sheaves on the quantum flag variety, and is a noncommutative projective scheme in the sense of Artin and Zhang \cite{ArtinZhang}. Moreover, quantum versions of differential operators on flag varieties encode representations of quantum groups, giving rise to a quantum version of the Beilinson-Bernstein localization theorem. 

In this paper, we take inspiration from previous work on quantum flag varieties in order to establish the  wonderful compactification for quantum groups. Our approach is ultimately rooted in Peter-Weyl theorem and the asymptotics of matrix coefficients, and employs the formalism of noncommutative projective schemes. Moreover, we consider quantum versions of the wonderful compactification's stratification by $G \times G$ orbits and its associated Vinberg semigroup. Our perspectives relate directly to the aforementioned quantum flag varieties. 

In the next section, we give an overview of the key properties of the wonderful compactification and the Vinberg semigroup. In Section \ref{subsec:whatis}, we explain the context behind our construction of the wonderful compactification for quantum groups. We list the main results of this paper in Section \ref{subsec:mainresults}, before providing further motivation for our work in the form of on-going and future projects in Section \ref{subsec:motivation}.

\subsection{What is the wonderful compactification?}\label{subsec:whatis0} Let $G$ be a connected semisimple group over $\C$, and let $\Gad = G/Z(G)$ denote the adjoint group of $G$. The wonderful compactification is a certain projective variety $\Gadbar$ that contains $\Gad$ as an open subvariety. We recall the precise definition of $\Gadbar$ in Section \ref{sec:classical}. For the purposes of this overview, here we simply highlight the key properties of $\Gadbar$:

\begin{enumerate}
 \item The variety $\Gadbar$ is stratified by the orbits of a $G \times G$ action. These orbits are indexed by subsets $I$ of the set $\Delta$ of positive simple roots. 
 
  \item The orbit corresponding to $I = \Delta$ is a copy of the adjoint group $\Gad$ with $G \times G$ action by left and right multiplication. This is the unique open orbit.
  
  \item The orbit corresponding to $I = \emptyset$ is the square of the flag variety, $G/B \times B^-\backslash G$, with $G \times G$ action induced by outermost left and right multiplication. This is the unique closed orbit. 
  
  \item The other orbits are related to partial flag varieties, Levi subgroups, rational degenerations of $G$, and wonderful compactifications of groups of smaller rank. The rich structure of the orbits distinguishes the wonderful compactification from other compactifications\footnote{The adjective `wonderful' is a technical term. A `wonderful variety' refers to a smooth, connected, complete variety with a group action such that there is an open orbit and whose boundary divisors have normal crossings and satisfy additional properties. For more details, see \cite{Luna}.} of $\Gadbar$.
\end{enumerate}

\begin{example} The wonderful compactification of $\PSL_2$ is $ \C\P^3$. However, for $n \geq 3$, the wonderful compactification of $\PSL_n$ is not $\C\P^{n^2-1}$, and is somewhat more difficult to describe explicitly.  \end{example}

The wonderful compactification captures the asymptotics of matrix coefficients in the following way. The Peter-Weyl theorem asserts that the coordinate algebra $\O(G)$ is spanned by matrix coefficients of representations of $G$. The fact that the isomorphism classes of irreducible finite-dimensional representations of $G$ are labeled by dominant weights leads to the definition of a (multi-)filtration on $\O(G)$ by the weight lattice $\Lambda$ of $G$. We refer to this filtration as the Peter-Weyl filtration on $\O(G)$.

\begin{theorem}\label{thm:Brion} \cite[Theorems 2.2.3 and 3.2.3]{Brion} The following $\Lambda$-graded algebras are isomorphic:
\begin{enumerate}
 \item\label{item:rees} The (multi-)Rees algebra of $\O(G)$ for the Peter-Weyl filtration.
 \item\label{item:cox} The total coordinate ring, or Cox ring, of the wonderful compactification $\Gadbar$. In particular, the Picard group of $\Gadbar$ is identified with $\Lambda$.
 \item The coordinate ring $\O(\VinbG)$ of the Vinberg semigroup $\Vinb_G$ of $G$. 
\end{enumerate}
\end{theorem}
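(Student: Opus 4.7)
My plan is to realize all three algebras as incarnations of a common $\Lambda$-graded algebra
$$A := \bigoplus_{\lambda \in \Lambda^+} F_\lambda \O(G), \qquad F_\lambda \O(G) := \bigoplus_{\mu \in \Lambda^+,\; \mu \leq \lambda} V(\mu)^* \otimes V(\mu),$$
where the right-hand decomposition comes from the Peter-Weyl theorem and the order is dominance on $\Lambda^+$. The filtration property $F_\lambda \cdot F_{\lambda'} \subseteq F_{\lambda + \lambda'}$ is immediate from the fact that $V(\mu) \otimes V(\mu')$ has highest weight $\mu + \mu'$, so $A$ is by construction the multi-Rees algebra of item (1). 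For item (3), one takes $\Vinb_G = \Spec A$ as a working definition, or equivalently verifies that the classical Vinberg semigroup (the closure of the image of $G \times T$ in $\bigoplus_i \End(V(\omega_i))$ for the fundamental weights $\omega_i$) has coordinate ring $A$, by a direct Peter--Weyl computation that also recovers the $T$-grading.

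The substantive content is the Cox-ring identification in item (2). I would first establish $\mathrm{Pic}(\Gadbar) \cong \Lambda$ by exhibiting a basis: the boundary divisors $D_i$ indexed by simple roots $\alpha_i \in \Delta$ generate a rank-$|\Delta|$ sublattice, and together with pullbacks along the canonical projection to the closed orbit $G/B \times B^-\backslash G$ of the line bundles $\mathcal{L}_{\omega_i}$, they give a basis of $\Lambda$. Next, for each $\lambda \in \Lambda$ I would compute $\Gamma(\Gadbar, \mathcal{L}_\lambda)$ by restriction to the open orbit $\Gad$: by $(G \times G)$-equivariance and Peter--Weyl, the restriction embeds $\Gamma(\Gadbar, \mathcal{L}_\lambda)$ into $\O(G)$ as a direct sum of isotypic components $V(\mu)^* \otimes V(\mu)$, and the extension-across-the-boundary condition at each $D_i$ is a pole-order inequality that, after a one-parameter degeneration check, is equivalent to the dominance condition $\mu \leq \lambda$. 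This identifies $\Gamma(\Gadbar, \mathcal{L}_\lambda)$ with $F_\lambda \O(G)$, and hence the Cox ring with $A$.

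The hard step is the boundary-extension analysis: quantifying the pole order of a matrix coefficient of $V(\mu)$ along each $D_i$ in terms of the pairing $\langle \lambda - \mu, \alpha_i^\vee \rangle$. This relies crucially on the wonderfulness of $\Gadbar$ --- the $D_i$ meet transversely, so the vanishing conditions along different boundary divisors decouple --- together with an explicit local model of a transversal slice to $D_i$, which can be built from a one-parameter subgroup $\lambda^\vee(t)$ degenerating into the appropriate boundary stratum. Once this pole-order identity is in hand, the three algebras are manifestly identified and the $\Lambda$-gradings on each side match the weight lattice tautologically.
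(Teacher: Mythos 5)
The paper does not prove Theorem \ref{thm:Brion}; it is quoted from Brion's paper with a citation, and the Vinberg semigroup is in fact later \emph{defined} (Section \ref{subsec:vinberg}) as $\Spec$ of the Rees algebra, so items (1) and (3) coincide by fiat in this exposition. So there is no internal proof to compare against, and I will assess your sketch on its own terms.

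Your overall strategy for item (2) --- compute $\mathrm{Pic}(\Gadbar)$, then identify $\Gamma(\Gadbar,\mathcal L_\lambda)$ with $F_\lambda\O(G)$ by restricting to the open orbit and measuring pole orders along the boundary divisors --- is sound and is essentially the route Brion and De Concini--Procesi take. However, one step as written is wrong: there is no ``canonical projection to the closed orbit $G/B\times B^-\backslash G$.'' The closed orbit is a codimension-$r$ \emph{subvariety} of $\Gadbar$, not a quotient, and no $G\times G$-equivariant map $\Gadbar\to G/B\times B^-\backslash G$ exists (the stabilizer $\Delta(\Gad)$ of the base point of the open orbit is not contained in any $G\times G$-stabilizer on the flag-variety side). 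What you should use instead is the \emph{restriction} homomorphism $\mathrm{Pic}(\Gadbar)\to\mathrm{Pic}(G/B\times B^-\backslash G)$, which is surjective with kernel freely generated by the classes of the boundary divisors $D_1,\dots,D_r$; this is how De Concini--Procesi establish $\mathrm{Pic}(\Gadbar)\cong\Lambda$. The line bundles $\mathcal L_\lambda$ are then constructed directly (e.g., by pulling back $\O(1)$ along the embeddings $\Gadbar\hookrightarrow\P(\End V_\mu)$ for varying regular dominant $\mu$, or via descent from $\VinbG$), not by pullback from the closed orbit. With that correction, the rest of your plan --- transversality of the $D_i$ decoupling the vanishing conditions, and a one-parameter degeneration along a dominant cocharacter to pin down the order of vanishing of a $V_\mu$-isotypic matrix coefficient along $D_i$ as $\langle\lambda-\mu,\alpha_i^\vee\rangle$ --- is the right calculation and completes the identification $\Gamma(\Gadbar,\mathcal L_\lambda)=F_\lambda\O(G)$. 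One additional caveat worth flagging: the statement $\mathrm{Pic}(\Gadbar)\cong\Lambda_G$ requires $G$ simply connected; for general $G$ the Picard group is the full weight lattice $\Lambda_W$, and one should be careful about which lattice is meant.
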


We recall the definition of the Vinberg semigroup in Section \ref{sec:classical}. For now, one can think of $\VinbG$ as the affine variety associated to either of the isomorphic rings in (\ref{item:rees}) or (\ref{item:cox}) of Theorem \ref{thm:Brion}. As the name indicates, the variety $\VinbG$ carries a canonical semigroup structure, and the group of units is, up to a finite group, the direct product  of $G$ with a maximal torus $T$. The weight lattice $\Lambda$ of $G$ can be realized as the character lattice of $T$, and so the $\Lambda$-grading on $\O(\VinbG)$ corresponds to an action of $T$ on $\VinbG$.

\begin{theorem}\cite[Theorem 5.3]{MartensThaddeus}\label{thm:GIT} Let $\lambda$ be a regular dominant weight, regarded as a character of $T$. The wonderful compactification  is isomorphic to the geometric invariant theory (GIT) quotient of $\Vinb_G$ by $T$ along $\lambda$: $$\Gbarad = \Vinb_G \text{ $\!$/$\! \!$/$\!$}_\lambda  T.$$ \end{theorem}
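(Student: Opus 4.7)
The plan is to reduce the GIT quotient to a $\Proj$ construction, then to use Theorem \ref{thm:Brion} to identify the resulting graded ring with a section ring on $\Gbarad$, and finally to invoke ampleness of the line bundle associated to a regular dominant weight.

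First I would set up the GIT side. The $\Lambda$-grading on $\O(\VinbG)$ (from either item in Theorem \ref{thm:Brion}) is dual to an action of $T$ on $\VinbG$, with the $\mu$-weight space being the space of $T$-semi-invariants of weight $\mu$. The character $\lambda \in \Lambda$ defines a linearization of this action on the trivial line bundle, and by the standard construction of GIT quotients,
\[
\VinbG \GIT_\lambda T \;=\; \Proj\left( \bigoplus_{n \geq 0} \O(\VinbG)_{n\lambda} \right),
\]
where $\O(\VinbG)_{n\lambda}$ denotes the graded piece corresponding to $n\lambda \in \Lambda$.

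Next I would translate the right-hand side using Theorem \ref{thm:Brion}. Since $\O(\VinbG)$ is the Cox ring of $\Gbarad$ with $\Lambda$ identified with $\mathrm{Pic}(\Gbarad)$, each piece $\O(\VinbG)_{\mu}$ equals $\Gamma(\Gbarad, \cL_\mu)$ for the line bundle $\cL_\mu$ associated to $\mu$. Setting $\cL = \cL_\lambda$ and using $\cL_{n\lambda} = \cL^{\otimes n}$, the GIT quotient becomes
\[
\VinbG \GIT_\lambda T \;=\; \Proj\left( \bigoplus_{n \geq 0} \Gamma(\Gbarad, \cL^{\otimes n}) \right),
\]
i.e.\ the $\Proj$ of the section ring of the line bundle $\cL$ on $\Gbarad$.

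Finally, I would invoke the fact that the section ring of an ample line bundle recovers the underlying projective variety. Since $\Gbarad$ is projective and $\lambda$ is regular dominant, the line bundle $\cL_\lambda$ lies in the interior of the ample cone of $\Gbarad$, hence is ample. It follows that $\Proj$ of its section ring is $\Gbarad$, yielding the desired isomorphism.

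The main obstacle is the last step, namely verifying that $\cL_\lambda$ is ample precisely when $\lambda$ is regular dominant. This is a fact about the geometry of the wonderful compactification and requires knowing that, under the identification $\mathrm{Pic}(\Gbarad) \cong \Lambda$ from Theorem \ref{thm:Brion}, the ample cone corresponds exactly to the interior of the dominant Weyl chamber. One way to see this is to decompose $\cL_\lambda$ along the closed $G \times G$-orbit $G/B \times B^- \backslash G$ and use the Borel--Weil criterion on each flag factor, combined with an argument (e.g.\ via the contraction of $\Gbarad$ onto the closed orbit, or via a base-point-free pencil argument on each boundary divisor) showing that positivity on the closed orbit propagates to the whole compactification. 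Given ampleness, the rest of the proof is essentially formal from Theorem \ref{thm:Brion} and standard GIT.
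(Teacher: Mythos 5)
The paper does not prove this statement; it is cited verbatim from Martens and Thaddeus \cite[Theorem 5.3]{MartensThaddeus} (stated twice, once in the introduction and once as Theorem~\ref{thm:MT}), where it arises from their identification of $\Gbarad$ with a moduli stack of framed bundle chains. Your argument is therefore not a reconstruction of the paper's proof but an independent derivation, and it is a sound one: it follows the algebraic route through Brion's total coordinate ring result (Theorem~\ref{thm:Brion}), namely unwind the affine GIT quotient as $\Proj\bigl(\bigoplus_{n\geq 0}\O(\VinbG)_{n\lambda}\bigr)$, identify each graded piece with $\Gamma(\Gbarad,\cL_\lambda^{\otimes n})$ via the Cox ring isomorphism, and conclude by ampleness of $\cL_\lambda$. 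This is effectively the argument sketched for the more general Proposition~\ref{prop:qcohproj} in the paper, specialized to the present situation.

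You correctly identify the substantive input you would still need to supply, namely that under the identification $\mathrm{Pic}(\Gbarad)\cong\Lambda$ the ample cone is exactly the interior of $\Lambda^+$. That fact is established in Brion \cite{Brion} (and goes back to De Concini--Procesi \cite{DCP}); it is not proved in this paper, so relying on it is consistent with the paper's own level of citation. Your suggested route via restriction to the closed orbit $G/B\times B^-\backslash G$, Borel--Weil positivity on the two flag factors, and propagation across the boundary divisors is the standard way to verify it. In short: your approach differs from the source the paper cites (Martens--Thaddeus, geometric/moduli-theoretic) and is closer to Brion's more algebraic treatment; it buys a shorter, more self-contained path given the Cox ring theorem, at the cost of importing the ample cone computation, whereas Martens--Thaddeus obtain the GIT description as a corollary of a richer moduli interpretation of $\Gbarad$.
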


\begin{rmk} The wonderful compactification admits a realization as a moduli space of certain framed bundle chains, as demonstrated by Martens and Thaddeus \cite{MartensThaddeus}. This perspective precipitates the construction, for any given semisimple group, of a distinguished smooth stack that compactifies the group. If the group has trivial center, this stack coincides with the wonderful compactification. \end{rmk}

\subsection{What is the wonderful compactification for quantum groups?}\label{subsec:whatis} A key tenet of algebraic  geometry, due to Grothendieck, asserts that a space can be completely understood through its category of sheaves. Furthermore, $q$-deformations of the category of sheaves can be viewed as categories of sheaves on a (nonexistent) quantum version of the original space. Thus, replacing a space with a category provides more flexibility in producing deformations; this is starting point of much of noncommutative geometry. 

Applying this philosophy to the case of the wonderful compactification, we seek a category $\QCoh_q(\Gadbar)$ that forms a $q$-deformation of the category $\QCoh(\Gadbar)$ of quasicoherent sheaves on the wonderful compactification $\Gadbar$. Here $q$ is a nonzero complex number that is not a root of unity. 

To obtain the desired $q$-deformation, we take inspiration from a result of Serre, which describes quasicoherent sheaves on a projective variety in terms of graded modules for the homogeneous coordinate ring. Specifically, let $A = \bigoplus_{n \in \Z} A_n$ be commutative graded ring, and let $X$ be the associated projective scheme with twisting sheaf $\O(1)$. A graded $A$-module is called torsion if every element is annihilated by $A_{\geq N}$ for some $N$. The quotient of the category of graded modules for $A$  by the full subcategory of torsion modules produces an abelian category denoted $\uProj(A)$. There is a functor of graded global sections:
\begin{align*} \Gamma_* : \QCoh(X) &\rightarrow \uProj(A) \\
\cF &\mapsto  \bigoplus_{n \in \Z} \Gamma(X, \cF \otimes \O(1)^{\otimes n}).\end{align*} 

\begin{theorem}\cite{Serre} If $A$ is finitely generated by elements of degree one over a field, then the functor $\Gamma_*$ of graded global sections is an equivalence of categories.
\end{theorem}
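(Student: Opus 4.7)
The plan is to construct an explicit quasi-inverse to $\Gamma_*$, namely the sheafification functor $M \mapsto \tilde M$ from graded $A$-modules to $\QCoh(X)$, show that it descends to $\uProj(A)$, and verify that the unit and counit of the resulting pair are natural isomorphisms. The finite-generation-in-degree-one hypothesis enters only at the last two steps, but it is essential there.

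First, I would construct $\tilde M$ in the standard way: on a basic open $D_+(f) \subset X$ corresponding to a homogeneous $f \in A_+$, set $\tilde M(D_+(f)) = M_{(f)}$, the degree-zero part of the localization $M[f^{-1}]$. Compatibility under restriction to $D_+(fg) \subset D_+(f)$ makes $\tilde M$ a quasicoherent sheaf, and the construction is manifestly functorial in $M$. To descend to $\uProj(A)$, I would check that $\tilde M = 0$ whenever $M$ is torsion: if $A_{\geq N} \cdot m = 0$ and $f \in A_+$ is homogeneous, then $f^k m = 0$ for $k$ large, so $m$ vanishes in $M[f^{-1}]$, hence $\tilde M(D_+(f)) = 0$ for every such $f$. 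By the universal property of the Serre quotient, $(-)^{\sim}$ factors through $\uProj(A)$.

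Next, I would verify that the counit $\widetilde{\Gamma_*(\cF)} \to \cF$ is an isomorphism of quasicoherent sheaves. Here the degree-one hypothesis enters for the first time: a choice of finitely many degree-one generators $x_0, \ldots, x_n$ for $A$ yields a finite affine cover $X = \bigcup D_+(x_i)$ with $D_+(x_i) \cong \Spec(A_{(x_i)})$. On each such open, the claim reduces to the standard affine statement that a quasicoherent sheaf on an affine scheme is the sheafification of its module of global sections; the local isomorphisms patch together naturally.

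The main obstacle, and the substantive step, is showing that the unit $\eta_M : M \to \Gamma_*(\tilde M)$ is an isomorphism in $\uProj(A)$, i.e., that its kernel and cokernel are torsion. Using the Čech complex for the cover $\{D_+(x_i)\}$, one identifies $\Gamma_*(\tilde M)_n$ with the kernel of
$$ \bigoplus_i (M[x_i^{-1}])_n \longrightarrow \bigoplus_{i<j} (M[(x_i x_j)^{-1}])_n, $$
with $\eta_M$ in degree $n$ given by the diagonal $m \mapsto (m, \dots, m)$. The finite-generation-in-degree-one of $A$ means that degree-$N$ monomials $x_{i_1} \cdots x_{i_N}$ span $A_N$, so an element of $\ker(\eta_M)$ that is killed by some power of each $x_i$ is killed by all of $A_{\geq N}$ for $N \gg 0$; similarly, any element of $\mathrm{coker}(\eta_M)$ lifts after multiplication by $A_{\geq N}$ for $N \gg 0$, by clearing denominators in the Čech expression uniformly. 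Both statements are elementary once the cover is fixed, but they fail without the degree-one hypothesis, and this is precisely where the assumption is used. This is the key technical input, and it is also the step that the Artin--Zhang formalism abstracts in order to generalize to the noncommutative setting employed later in the paper.
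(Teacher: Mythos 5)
The paper does not prove this theorem; it is cited from Serre's \emph{Faisceaux Alg\'ebriques Coh\'erents} and invoked as a black box, so there is no in-paper argument to compare against. Your sketch follows the standard route (Serre's original, also Hartshorne~II.5 and EGA~II): sheafification $M \mapsto \tilde M$ as the candidate quasi-inverse, its vanishing on torsion modules, the counit isomorphism over the finite affine cover by the $D_+(x_i)$, and the unit being an isomorphism modulo torsion via a \v{C}ech description of $\Gamma_*(\tilde M)$. The overall structure is correct, and your remark that this is precisely the input Artin and Zhang abstract via their ampleness condition is apt.

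One step is glossed over more than it should be: the counit argument does not formally ``reduce to the standard affine statement.'' On $D_+(x_i)$ the comparison is between the $A_{(x_i)}$-modules $\Gamma_*(\cF)_{(x_i)}$ and $\cF(D_+(x_i))$, and showing that the natural map between them is an isomorphism is itself a lifting and clearing-denominators argument: a section of $\cF$ over $D_+(x_i)$ extends to a global section of $\cF \otimes \O(n)$ only after multiplying by a high power of $x_i$, and two such extensions agree only after a further twist, again using that the $x_i$ generate in degree one. This is the content of Hartshorne's Proposition~II.5.15, and it uses the finite degree-one generating set just as essentially as the unit step does. So I would not present the counit as following formally from the affine case once the cover is fixed; it is a second instance of the same clearing-denominators mechanism you invoke for the unit, and writing it that way makes the role of the hypothesis clearer and the argument complete.
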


The definition of $\uProj(A)$ works just as well when $A$ is a noncommutative graded ring. Moreover, rather than just considering rings graded by the integers, one can consider rings graded by the weight lattice $\Lambda$ of a semisimple group $G$. For such a ring $R$, one can make sense of a torsion graded $R$-module, and form the quotient $\uProj(R)$ of the category of graded $R$-modules by the subcategory of graded torsion modules.

We now outline the construction of the quantum wonderful compactification:

\begin{enumerate} 
 \item Use Theorem \ref{thm:Brion} to establish an equivalence between the category $\QCoh(\Gadbar)$ of quasicoherent sheaves on the wonderful compactification and the category $\uProj(\O(\VinbG))$, where $\O(\VinbG)$ is the $\Lambda$-graded algebra of functions on the Vinberg semigroup. 
 \item Produce a $q$-deformation $\O_q(\VinbG)$ of $\O(\VinbG)$, compatible with relevant structures. 
 \item Define the category of quasicoherent sheaves on the quantum wonderful compactification as the category $\QCoh_q(\Gadbar) := \uProj\left(\O_q(\VinbG)\right)$. 
\end{enumerate}

\subsection{Main results}\label{subsec:mainresults}

Fix $q$ to be a nonzero complex number that is not a root of unity. A version of the Peter-Weyl theorem holds for the quantum coordinate algebra $\O_q(G)$ and we obtain a filtration on $\OqG$ by $\Lambda$, which we refer to as the Peter-Weyl filtration.

\begin{prop}[Proposition \ref{prop:qvinberg}] The (multi-)Rees algebra of $\OqG$ for the Peter-Weyl filtration is a $q$-deformation of the coordinate ring of the Vinberg semigroup $\Vinb_G$ of $G$, and quantizes a certain Poisson structure on $\Vinb_G$. We denote this Rees algebra by $\O_q(\Vinb_G)$. \end{prop}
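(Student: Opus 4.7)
The plan is to construct $\OqVinb$ directly as a multi-Rees algebra from the Peter-Weyl decomposition of $\OqG$, and then verify the two claims, $q$-deformation and Poisson compatibility, in sequence. To start, I invoke the quantum Peter-Weyl theorem, which provides a bi-$\uqg$-module decomposition
$$\OqG \cong \bigoplus_{\mu \in \Lambda_+} V_\mu \otimes V_\mu^*,$$
where $V_\mu$ denotes the type-$1$ irreducible $\uqg$-module of highest weight $\mu$ and $\Lambda_+$ is the dominant cone. For $\lambda \in \Lambda_+$, I set $F_\lambda \OqG := \bigoplus_{\mu \leq \lambda} V_\mu \otimes V_\mu^*$, where the sum is over dominant $\mu$ below $\lambda$ in the dominance order. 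That $\{F_\lambda\}$ is a multiplicative filtration on $\OqG$ reduces to the fact that any irreducible summand of $V_\mu \otimes V_{\mu'}$ has highest weight $\leq \mu+\mu'$, a decomposition rule that holds at generic $q$ just as it does classically. I then define $\OqVinb := \bigoplus_{\lambda \in \Lambda_+} F_\lambda \OqG$ as the multi-Rees algebra, with product inherited from $\OqG$ via the inclusions $F_\lambda \otimes F_{\lambda'} \hookrightarrow F_{\lambda+\lambda'}$.

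For the $q$-deformation claim, I observe that the dimensions of each filtered piece $F_\lambda \OqG$ are independent of $q$, since $\dim V_\mu$ is the same at generic $q$ and at $q=1$. Hence the underlying $\Lambda_+$-graded vector space of $\OqVinb$ is canonically identified with that of the classical Rees algebra of $\O(G)$. Specializing at $q=1$ recovers the classical Peter-Weyl filtration, whose multi-Rees algebra is $\O(\Vinb_G)$ by Theorem~\ref{thm:Brion}(\ref{item:rees}). This simultaneously gives flatness of the family and identifies the classical limit as $\O(\Vinb_G)$.

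For the Poisson claim, I consider the semiclassical expansion around $q=1$: writing $q = e^\hbar$, the leading-order commutator term on $\OqG$ produces a Poisson bracket on $\O(G)$ that reproduces the standard Poisson-Lie structure. Because this bracket respects the Peter-Weyl filtration, it descends to a Poisson bracket on the associated Rees algebra, i.e., on $\O(\Vinb_G)$. This bracket restricts to the standard one on the Zariski-open subset of $\Vinb_G$ that contains a copy of $G$, and a direct comparison identifies it with a canonical multiplicative Poisson structure on the Vinberg semigroup.

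The main obstacle will be pinpointing exactly which Poisson structure on $\Vinb_G$ this construction produces and matching it with a canonical choice in the literature (for instance, structures studied by Lu and Yakimov). A secondary difficulty is choosing conventions for the multi-Rees construction so that the filtration indexed by the dominance order interacts correctly with the grading and multiplication; this is largely bookkeeping, but must be done carefully since the dominance order is not total and the filtered pieces do not form a single ascending chain.
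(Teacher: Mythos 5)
Your overall strategy is the paper's: construct the Rees algebra from the quantum Peter-Weyl filtration, observe that the filtered pieces have $q$-independent dimensions (so the family is flat and the $q=1$ fiber is $\O(\Vinb_G)$ by Theorem~\ref{thm:Brion}), and deduce the Poisson quantization from the fact that the semiclassical bracket preserves the Peter-Weyl filtration and descends to the Rees algebra, matching Definition~\ref{def:vinbergpoisson}. The paper's own proof is exactly this, stated in one sentence by citing Proposition~\ref{prop:oqgfiltration} and flatness of $\O_q(G)$.

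However, there is a genuine gap in your construction of the Rees algebra, and it is precisely the ``bookkeeping'' pitfall you flagged at the end. You define
\[
\OqVinb := \bigoplus_{\lambda \in \Lambda^+} F_\lambda\, \O_q(G),
\]
summing only over \emph{dominant} weights. The paper's Rees algebra (Definition~\ref{def:maindefs}) is graded by the \emph{entire} weight lattice,
\[
\OqVinb = \bigoplus_{\lambda \in \Lambda} \O_q(G)_{\leq \lambda}\, z^\lambda,
\]
and the support of this grading is strictly larger than $\Lambda^+$. The paper explicitly warns about this after Definition--Proposition~\ref{def:filtration}: there are non-dominant $\lambda$ (e.g.\ positive simple roots) with $\{\mu \in \Lambda^+ : \mu \leq \lambda\}$ nonempty, so $\O_q(G)_{\leq \lambda} \neq 0$. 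Concretely, for $G = \SL_3$ the simple root $\alpha_1 = 2\omega_1 - \omega_2$ is not dominant, yet $0 \leq \alpha_1$, so the Rees algebra contains $z^{\alpha_1}$. Your version omits it. This is not merely cosmetic: the elements $z^{\alpha_i}$ generate the central polynomial subalgebra $\C[z^{\alpha_i}]$ (Lemma~\ref{lem:qVinbbasics}) that realizes the abelianization $\pi : \Vinb_G \to \A$, and the classical limit of your truncated algebra is a proper $\Lambda^+$-graded subalgebra of $\O(\Vinb_G)$ rather than $\O(\Vinb_G)$ itself. So as written, your argument proves the flat-deformation claim for the wrong object. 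The fix is simply to index the Rees algebra by all of $\Lambda$ (with $\O_q(G)_{\leq\lambda}$ understood to vanish when no dominant $\mu$ satisfies $\mu \leq \lambda$); once you do, the rest of your argument goes through unchanged. Note also that in the $\SL_2$ case this distinction is invisible, since there $\alpha_1 = 2 = 2\omega_1$ is dominant, which may be why the error is easy to miss.
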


Observe that the algebra $\O_q(\Vinb_G)$ carries a grading by $\Lambda$. As it is a $q$-deformation of the total coordinate ring of the wonderful compactification, we make the following definition, which recovers the category of quasicoherent sheaves on $\Gbarad$ when $q =1$.

\begin{definition}[Definition \ref{def:maindefs}]  The category of quasicoherent sheaves on the quantum wonderful compactification is defined as $$\QCoh_q\left(\Gbarad\right) = \uProj\left( \O_q(\Vinb_G) \right).$$ \end{definition}

Recall that the variety $\Gadbar$ is stratified by the orbits of a $G \times G$ action, and these orbits are indexed by subsets of the set of positive simple roots. Given such a subset $I$, we write $\Orb_I$ for the corresponding orbit, and $\Lambda_I$ for the sublattice of $\Lambda$ spanned by the roots in $I$. There is a filtration on $\OqG$ by the quotient lattice $\Lambda/\Lambda_I$, which is a coarser filtration than the Peter-Weyl filtration. Let $\gr_I(\OqG)$ denote the associated graded algebra. 

\begin{definition}[Definition \ref{def:qorbits}] Fix a subset $I$ of positive simple roots. The category of quasicoherent sheaves on the quantum orbit corresponding to $I$ is defined as $$\QCoh_q\left(\Orb_I\right) = \uProj\left( \gr_I(\OqG) \ot \C[\Lambda_I] \right).$$ \end{definition}

A subset $I$ of positive simple roots determines a parabolic subgroup $P=P_I$ of $G$ and an opposite parabolic $P^-$. These have a common Levi group $L$, whose Lie algebra is denoted $\frakl$. Let $\fraku$ and $\fraku^-$ be the Lie algebras of the unipotent radicals of $P$ and $P^-$. 

\begin{theorem}[Theorem \ref{thm:gradedinvariants}] \label{thm:qorbitsintro}There is an isomorphism of $\Lambda/\Lambda_I$-graded algebras
$$\gr_I(\OqG)= \O_q(G \times G)^{\uqulu}$$
\end{theorem}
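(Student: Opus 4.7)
The plan is to describe both sides of the claimed identity via the quantum Peter--Weyl decomposition and then match them as $\Lambda/\Lambda_I$-graded algebras.

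For the left-hand side, recall $\O_q(G) = \bigoplus_{\lambda \in \Lambda^+} V_\lambda^* \otimes V_\lambda$ as a $U_q(\g)$-bimodule. The Peter--Weyl filtration places matrix coefficients of $V_\lambda$ in degree $\lambda$; coarsening along $\Lambda_I$ gives the $\Lambda/\Lambda_I$-filtration considered in the theorem. The associated graded $\gr_I\O_q(G)$ retains the decomposition $\bigoplus_\lambda V_\lambda^* \otimes V_\lambda$ as a vector space, graded by $\Lambda/\Lambda_I$ via $\lambda \mapsto \bar\lambda$, but with a \emph{truncated} multiplication: the product of matrix coefficients of $V_\lambda$ and $V_\mu$ is the projection of their product in $\O_q(G)$ onto the Clebsch--Gordan summands $V_\nu \subset V_\lambda \otimes V_\mu$ satisfying $\bar\nu = \overline{\lambda+\mu}$ in $\Lambda/\Lambda_I$ --- equivalently, onto the ``top Levi'' components whose highest weight is maximal modulo $\Lambda_I$.

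For the right-hand side, apply Peter--Weyl to each tensor factor of $\O_q(G \times G) = \O_q(G) \otimes \O_q(G)$ to obtain $\bigoplus_{\lambda,\mu} V_\lambda^* \otimes V_\lambda \otimes V_\mu^* \otimes V_\mu$. The action of $\uqulu$ is pulled back from $U_q(\g) \otimes U_q(\g)$ through the Hopf algebra map induced by the Levi decompositions of $P_I$ and $P_I^-$: with appropriate conventions, $U_q(\fraku)$ acts on one side of the first tensor factor, $U_q(\fraku^-)$ on one side of the second factor, and $U_q(\frakl)$ diagonally. Taking $\fraku$- and $\fraku^-$-invariants cuts $V_\lambda$ and $V_\mu$ down to their top Levi-isotypic pieces $V_\lambda^\fraku$ and $V_\mu^{\fraku^-}$; the remaining diagonal $U_q(\frakl)$-invariance then forces these to be mutually dual as $L$-irreducibles, which happens exactly when $\bar\lambda = \bar\mu$ in $\Lambda/\Lambda_I$. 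The surviving invariant subspace is thus $\bigoplus_\lambda V_\lambda^* \otimes V_\lambda$, graded by $\Lambda/\Lambda_I$, in bijection with the underlying space of $\gr_I \O_q(G)$.

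The principal obstacle is matching the two algebra structures. By construction, the multiplication on $\gr_I \O_q(G)$ annihilates all non-top-Levi summands of $V_\lambda \otimes V_\mu$; on the invariant side, the product inherited from $\O_q(G \times G)$ lands in the $\uqulu$-invariants of $(V_\lambda \otimes V_\mu)^* \otimes (V_\lambda \otimes V_\mu)$, and one must verify that precisely the same non-top summands are automatically killed by $\fraku$- and $\fraku^-$-invariance. This should reduce, after untangling the actions, to the Hopf-compatibility of invariants together with the standard observation that only top-Levi components contribute to the projection picking out the maximal $\Lambda/\Lambda_I$-weight in a Clebsch--Gordan decomposition. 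Once the conventions and bimodule structures are set up carefully, complete reducibility of $U_q(\g)$- and $U_q(\frakl)$-representations at generic $q$ should formally yield the required isomorphism of $\Lambda/\Lambda_I$-graded algebras.
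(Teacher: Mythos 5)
Your proposal follows the same overall strategy as the paper's proof: use the quantum Peter--Weyl decomposition on both sides, compute the $\uqulu$-invariants of $\O_q(G \times G)$ summand by summand, and match the resulting $\uqg$-bimodule descriptions. But there is one genuine error and one genuine gap.

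\textbf{The error.} After taking $U_q(\fraku)$- and $U_q(\fraku^-)$-invariants you arrive at a space of the form $\bigoplus_{\lambda,\mu}\cV_\lambda^*\otimes\bigl[(\cV_\lambda)_I\otimes(\cV_\mu)_I^*\bigr]^{U_q(\frakl)}\otimes\cV_\mu$ and assert that the diagonal $U_q(\frakl)$-invariance is nonzero exactly when $\bar\lambda=\bar\mu$ in $\Lambda/\Lambda_I$. This is false. The subalgebra $U_q(\frakl)\subset\uqg$ contains the \emph{full} quantum torus $K_1^{\pm1},\dots,K_r^{\pm1}$, so the $U_q(\frakl)$-irreducible $(\cV_\lambda)_I$ remembers the highest weight $\lambda\in\Lambda$ itself, not merely its class mod $\Lambda_I$. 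Hence $(\cV_\lambda)_I\cong(\cV_\mu)_I$ as $U_q(\frakl)$-modules if and only if $\lambda=\mu$ --- this is precisely Lemma~\ref{lem:vifacts}(\ref{lem:lrepns}) in the paper, and it is essential: under your weaker condition $\bar\lambda=\bar\mu$ the invariant space would contain nonzero cross terms $\cV_\lambda^*\otimes(\cdots)\otimes\cV_\mu$ with $\lambda\neq\mu$, contradicting the space $\bigoplus_\lambda\cV_\lambda^*\otimes\cV_\lambda$ you write down in the next sentence. Your final answer is right, but the stated reasoning does not produce it.

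\textbf{The gap.} The substantial content of the theorem is that the vector-space identification is an \emph{algebra} isomorphism, and your proposal only gestures at this (``one must verify that precisely the same non-top summands are automatically killed,'' ``should formally yield the required isomorphism''). Complete reducibility alone does not produce an algebra map. The paper writes down an explicit homomorphism $\Phi$ sending $c^{\cV_\lambda}_{f,v}$ to $\sum_i c^{\cV_\lambda}_{f,e_i^{(\lambda)}}\otimes c^{\cV_\lambda}_{e^i_{(\lambda)},v}$ for a basis $\{e_i^{(\lambda)}\}$ of $(\cV_\lambda)_I$ and its dual, and then checks multiplicativity by tracking Clebsch--Gordan data: the truncated product on $\gr_I(\OqG)$ keeps only the summands $\cV_\nu\subset\cV_\lambda\otimes\cV_\mu$ with $\nu\in\lambda+\mu+\Lambda_I$, and Lemma~\ref{lem:subrepn} identifies $(\cV_\lambda)_I\otimes(\cV_\mu)_I$ with $\bigoplus_{\nu\in\lambda+\mu+\Lambda_I}(\cV_\nu)_I^{\oplus N_{\lambda\mu}^\nu}$, which is exactly what makes the bases on the two sides match up. Some version of this basis bookkeeping is unavoidable if you want the product structures to agree; it cannot be replaced by an appeal to formal compatibility.
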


Consequently, the quantum orbits can also be described in terms of certain graded subalgebras of $\O_q(G\times G)$, which are given as invariants for a quantum group action. When $I$ consists of all positive simple roots, the subalgebra in question is the `diagonal' copy of $\O_q(G)$. At the other extreme, when $I = \emptyset$, the multihomogeneous coordinate ring of the classical orbit $G/B \times B^-\backslash G$ is the algebra of functions on the asymptotic cone $(G/N \times N^-\backslash G)/T$, which quantizes precisely to $\O_q(G \times G)^{U_q(\mathfrak{n} \times \frakt \times \mathfrak{n}^-)}$. By Theorem \ref{thm:qorbitsintro}, this algebra is isomorphic to the full associated graded algebra $\gr(\OqG)$ for the Peter-Weyl filtration.

\subsection{Motivation and future directions}\label{subsec:motivation}

This paper forms the first step in a program to understand the role of the wonderful compactification in quantum geometric representation theory. We remark on several goals in this program that provide motivation for the current work.

In joint on-going work with D.~Ben-Zvi and D.~Nadler, we aim to place the Beilinson-Bernstein localization theorem within the framework of the wonderful compactification and asymptotics of matrix coefficients. Precursors to our work appear in work of Ben-Zvi and Nadler, and of Emerton, Nadler, and Vilonen \cite{BZNHarishChandra, ENV}. We expect some of our techniques, together with the newly-introduced quantum wonderful compactification, to place the quantum Beilinson-Bernstein theorem within the same framework of asymptotics of matrix coefficients. This approach requires the development of the notion of quantum differential operators on the Vinberg semigroup, and an analogue  of the Verdier specialization functor for quantum $\D$-modules  

Another source of motivation is the development of the theory of quantum character sheaves as a contribution to the study of harmonic analysis on quantum groups. The theory of character sheaves for quantum groups will involve:
\begin{enumerate}
 \item The construction of the appropriate $q$-deformation of the Hecke category of Borel equivariant $\D$-modules on the flag variety. Related constructions appear in work of Backelin and Kremnitzer on quantum flag varieties \cite{BaKr}.
 \item The elevation of various links between character sheaves and the wonderful compactification (see, e.g.\ \cite{BFO}) to the quantum level.
 
 
 \item An alignment with the quantum geometric Langlands program. In particular, we expect direct relations with topological field theories arising from quantum groups and their categories of representations \cite{BBJ}. 
\end{enumerate}

Another future direction is the consideration of the case when the quantum parameter $q = \epsilon$ is a root of unity. In this case, the quantum coordinate algebra $\O_\epsilon(G)$ is finite dimensional over its center, and contains the unquantized coordinate algebra $\O(G)$ as a central sub-Hopf algebra. It is reasonable to expect that the category $\QCoh_\epsilon(\Gadbar)$ to be the category of modules for a certain sheaf of algebras $\mathcal A$ on $\Gadbar$, and that $\mathcal A$ has an Azumaya locus related to double Bruhat cells.

\subsection{Outline}\label{subsec:outline} We now describe the contents of this paper. Section \ref{sec:preliminaries} contains preliminary material, including background on matrix coefficients for Hopf algebras (Section \ref{subsec:matrixcoef}) and notation for algebraic groups and quantum groups (Section \ref{subsec:notation}). Section \ref{subsec:projbasic} introduces the formalism of $\uProj$ categories for noncommutative, multi-graded algebras, including the notion of torsion graded modules, compatible Poisson structures, and the relation to quasicoherent sheaves. 

Section \ref{sec:classical} gives an expository account of the construction of the Vinberg semigroup (Section \ref{subsec:vinberg}) and wonderful compactification (Section \ref{subsec:wonderful}). Although there are no new results in Section \ref{sec:classical}, our approach is somewhat more algebraic than that of other authors, and provides insight on the quantum case. We describe the stratification of the wonderful compactification by $G \times G$ orbits in Section \ref{subsec:orbits}, explain Poisson structures on the Vinberg semigroup and wonderful compactification in Section \ref{subsec:poissonvinb}, and examine the case of $\SL_2$ in Section \ref{subsec:sl2}. 

Section \ref{sec:quantumwc} forms the heart of this paper. We begin the section with the construction of the quantum Vinberg semigroup and the quantum wonderful compactification (Section \ref{subsec:qdefs}).  We introduce filtrations on the quantum coordinate algebra $\O_q(G)$ in Section \ref{subsec:qfiltrations} and explain the connection to quantum flag varieties. Finally, we use these filtrations to describe the quantum orbits in Section \ref{subsec:qorbits}, which is perhaps the most technical section of this paper. 

Section \ref{sec:qsl2} examines the general constructions of Section \ref{sec:quantumwc} in the case of $G = \SL_2$. We include basic background (Section \ref{subsec:uosl2}) followed by a description of the Peter-Weyl filtration for $\O_q(\SL_2)$ (Section \ref{subsec:pwsl2}). We state results on the quantum Vinberg semigroup and wonderful compactification for $\SL_2$ in Section \ref{subsec:vinbergsl2}.

\subsection{Acknowledgements} This paper is based on material from the author's PhD thesis. He would like to warmly thank his advisor, David Ben-Zvi, for suggesting the topic of this paper, and, more importantly, for his constant encouragement, patience, and guidance throughout graduate school. Special thanks goes to David Jordan, Kobi Kremnitzer, Travis Schedler, Pavel Safronov, and Sam Gunningham for numerous conversations. In addition, the author would like to thank  Iain Gordon, Anthony Henderson, Masoud Kamgarpour, Sean Keel, Johan Martens, and Ben Webster for insightful questions and discussions.  This work was supported by the National Science Foundation through a Graduate Research Fellowship, and through grant No.\ 0932078000 while the author was in residence at the Mathematical Sciences Research Institute during Fall 2014.

\section{Preliminaries}\label{sec:preliminaries}

This section collects background, notation, and other preliminary material that will be used in subsequent sections. The reader may skip this section on first reading and refer to it as necessary. Throughout the remainder of this paper, unless specified otherwise, the ground field is $\C$.

\subsection{Matrix coefficients}\label{subsec:matrixcoef}

The following discussion follows Section 1.9 of \cite{BrownGoodearl}.

\begin{definition} The Hopf dual of an algebra $A$ is defined as $$A^\circ = \{  f\in A^* \ | \ f(I)=0 \ \text{for some ideal $I$ of $A$ with $\dim(A/I) < \infty$}\}.$$  \end{definition}

\begin{lemma} Let $A$ be an algebra with multiplication $m$ and unit $\eta$. The Hopf dual $A^\circ$ is a coalgebra with $\Delta = m^*$ and $\epsilon = \eta^*$. Moreover, if $H = (H, m, \eta, \Delta, \epsilon)$ is a Hopf algebra, then $H^\circ = (H^\circ, \Delta^*, \epsilon^*, m^*, \eta^*)$ is a Hopf algebra. \end{lemma}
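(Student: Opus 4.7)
The plan is to verify that the claimed structure maps preserve $A^\circ$ (or its tensor powers); once closure is established, all the compatibility axioms follow formally by dualizing the corresponding axioms for $A$. Hence the real content lies in the closure statements, and the abstract ``diagram chase'' portions I would dispatch at the end without ceremony.

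For the coalgebra assertion, the subtle point is that $m^{\ast} : A^{\ast} \to (A \otimes A)^{\ast}$ a priori lands in $(A \otimes A)^{\ast}$, which properly contains the natural image of $A^{\ast} \otimes A^{\ast}$. Given $f \in A^\circ$ vanishing on a cofinite two-sided ideal $I$, I would factor $f = \bar f \circ \pi$, where $\pi : A \to A/I$ is the quotient map. Since $I$ is two-sided, $A/I$ inherits a finite-dimensional algebra structure with multiplication $\bar m$, and $m^{\ast}(f) = (\pi \otimes \pi)^{\ast} \bar m^{\ast}(\bar f)$. In the finite-dimensional case we have $\bar m^{\ast}(\bar f) = \sum_i \bar g_i \otimes \bar h_i$ inside $(A/I)^{\ast} \otimes (A/I)^{\ast}$, and pulling back yields $m^{\ast}(f) = \sum_i (\bar g_i \circ \pi) \otimes (\bar h_i \circ \pi)$, each factor manifestly in $A^\circ$. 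The counit $\epsilon = \eta^{\ast}$ is simply evaluation at $1 \in A$, which is automatic on all of $A^{\ast}$. Coassociativity and the counit identity on $A^\circ$ then follow by dualizing associativity and the unit axioms for $A$.

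For the Hopf algebra assertion, the multiplication on $H^\circ$ is the convolution $(f \ast g)(x) = \sum f(x_{(1)}) g(x_{(2)})$ obtained by restricting $\Delta^{\ast}$, and the unit $\epsilon^{\ast}(1)$ is the counit $\epsilon \in H^\circ$, which lies in $H^\circ$ because $\ker \epsilon$ has codimension one. Closure under convolution I would verify via the equivalent characterization that $f \in H^\circ$ iff the translation orbit $\{x \mapsto f(xh) : h \in H\}$ is a finite-dimensional subspace of $H^{\ast}$; this equivalence follows from the cofinite-ideal argument together with the fact that the annihilator of any finite-dimensional translation-stable subspace is a cofinite two-sided ideal. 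A Sweedler-notation calculation then gives $h \rightharpoonup (f \ast g) = \sum (h_{(1)} \rightharpoonup f) \ast (h_{(2)} \rightharpoonup g)$, which exhibits the translation orbit of $f \ast g$ as a subspace of convolutions drawn from the two finite-dimensional orbits of $f$ and of $g$, hence itself finite-dimensional. Although the tuple in the statement omits it, the antipode of $H^\circ$ is $S^{\ast}$, and closure reduces to showing that $S^{-1}(I)$ is a cofinite two-sided ideal of $H$ whenever $I$ is: two-sidedness uses that $S$ is an anti-algebra map, and cofiniteness follows because $S$ descends to a linear injection $H/S^{-1}(I) \hookrightarrow H/I$. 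The bialgebra compatibility and antipode identity on $H^\circ$ then follow by dualizing the corresponding identities on $H$.

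The main obstacle is precisely the closure verifications of the preceding two paragraphs; everything else is formal categorical duality between the axioms of an algebra and those of a coalgebra. Of these, I expect convolution closure to be the subtlest step, since the finite-dimensionality of the orbit of $f \ast g$ must be extracted indirectly via the Sweedler identity above rather than read off directly from the cofinite-ideal definition.
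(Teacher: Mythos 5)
The paper does not prove this lemma; it is stated as standard background, attributed to Brown--Goodearl \cite{BrownGoodearl} (Section 1.9). So there is no proof in the text to compare yours against, and the appropriate standard is whether your argument is correct and complete on its own terms. It is.

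Your decomposition of the work is the right one: the formal axioms of a (Hopf) coalgebra structure transfer by abstract duality once one knows that the structure maps actually land in $A^\circ$ (respectively $A^\circ \otimes A^\circ$), so the entire substance lies in the three closure checks, and you supply each. The coalgebra closure via factoring $f = \bar f \circ \pi$ through a finite-dimensional quotient algebra is exactly the standard argument, and the observation that $\bar m^*(\bar f)$ decomposes inside $(A/I)^* \otimes (A/I)^*$ precisely because $A/I$ is finite-dimensional is the key point. For convolution closure, your Sweedler computation
$h \rightharpoonup (f * g) = \sum (h_{(1)} \rightharpoonup f) * (h_{(2)} \rightharpoonup g)$
is correct, and the inference that the orbit of $f*g$ lies in the image of $V_f \otimes V_g$ under the convolution bilinear map (finite-dimensional) is sound. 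Your invocation of the equivalence ``$f \in H^\circ$ iff the right-translation orbit of $f$ is finite-dimensional'' is also correct, with the reverse implication coming from the fact that the annihilator of the finite-dimensional left $H$-module $H \rightharpoonup f$ is a cofinite two-sided ideal on which $f$ vanishes (since $f(a) = (a \rightharpoonup f)(1)$). The antipode closure via $S^{-1}(I)$ is correct as well: two-sidedness from $S$ being an algebra anti-homomorphism, cofiniteness from the injection $H/S^{-1}(I) \hookrightarrow H/I$. In short, your proof is the standard one and contains no gaps.
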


\begin{definition} Let $M$ be a (left) module over $H$. For $v \in M$ and $f \in M^*$ define the coordinate function $c_{f,v}^M \in H^*$ as  $c_{f,v}^M(h) = f(hv)$ for $h \in H$. Thus, we have a map $c^M : M^* \otimes M \rightarrow H^*$ taking $ f \otimes v$ to $c_{f,v}^M$. The image of $c^M$ is called the set of `matrix coefficients' for $M$. 
\end{definition}

\begin{lemma}\label{lem:matrixcoeff} We collect the following basic properties of matrix coefficients:
\begin{enumerate}
\item If $M$ is finite-dimensional, then its matrix coefficients lie in $H^\circ$. 

\item The map $c^M: M^* \otimes M \rightarrow H^*$ is $H \times H$-equivariant. Consequently, if $M$ is an irreducible $H$-module, then $c^M$ is injective.

\item \label{lem:matrixcoeffbasics} Let $M$ and $N$ be finite dimensional modules for $H$, let $v \in M$, $f \in M^*$, $w \in N$ and $g \in N^*$, and let  $v_i$ and $f_i$ be dual bases of $M$ and $M^*$. Then:
$$c_{f,v}^M + c_{g,w}^N = c_{(f,g), (v,w)}^{M \oplus N}  \qquad c_{f,v}^M \cdot c_{g,w}^N = c_{f\otimes g, v \otimes w}^{M \otimes N} $$ $$ \Delta (c^{M}_{f,v}) = \sum_i c^{M}_{f, v_i} \otimes c^{M}_{f_i,v} \qquad \epsilon(c_{f,v}^M) = f(v) \qquad S(c_{f,v}^M) = c_{v,f}^{M^*}.$$ In particular, the coproduct on $H^\circ$ sends the image of $c^M$ to that of $c^M \ot c^M$.

\item \label{lem:subbialgebra} As an $H\times H$-module, $H^\circ$ is isomorphic to the directed union of  the matrix coefficients for finite-dimensional irreducible $H$-modules $M$:
$$\bigoplus_\text{\rm $M$ fin. dim. irr.} M^* \otimes M \stackrel{\sim}{\longrightarrow} H^\circ.$$

\item \label{lem:matrixcoeffmorphism} Suppose $\phi: M \rightarrow N$ is an $H$-equivariant homomorphism, and let $\phi^* :N^* \rightarrow M^*$ be the dual homomorphism. Then $c_{f,\phi(m)}^N = c_{\phi^*(f), m}^M$ for any $m  \in M$ and $f \in N^*$. 
\end{enumerate} \end{lemma}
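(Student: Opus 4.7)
The plan is to verify each of the five items essentially by direct computation from the defining formula $c_{f,v}^M(h) = f(hv)$, together with the Jacobson density theorem for item (2) and a structure theorem for cosemisimple coalgebras for item (4).

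First, for item (1), I would let $\rho : H \to \End(M)$ denote the representation and set $I = \ker \rho$. Since $H/I$ embeds into $\End(M)$ it is finite-dimensional, and by construction $c_{f,v}^M$ vanishes on $I$, hence lies in $H^\circ$. For item (5), the computation is immediate from $H$-equivariance of $\phi$:
\[
c_{f,\phi(m)}^N(h) = f(h\phi(m)) = f(\phi(hm)) = (\phi^* f)(hm) = c_{\phi^* f, m}^M(h).
\]
For item (3), each identity is a short calculation. The sum and product rules follow from the definitions of the module structures on $M \oplus N$ and $M \otimes N$ together with the convolution product on $H^*$ coming from $\Delta$. For the coproduct, one checks $\Delta(c_{f,v}^M)(h \otimes h') = f(hh'v) = \sum_i f(h v_i)\, f_i(h'v) = \sum_i c_{f,v_i}^M(h)\,c_{f_i,v}^M(h')$, where the middle equality uses the dual basis identity $h'v = \sum_i f_i(h'v)\,v_i$. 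The counit identity is immediate, and the antipode identity uses that the left action of $H$ on $M^*$ is $(h \cdot f)(v) = f(S(h)v)$.

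For item (2), $H\times H$-equivariance is a direct check from the definitions of the actions of $H \times H$ on $M^*\otimes M$ and on $H^*$ (by left and right translation). For injectivity when $M$ is finite-dimensional and irreducible, I would invoke the Jacobson density theorem: the map $H \to \End(M)$ is surjective. Using the identification $M^*\otimes M \cong \End(M)^*$ given by $(f\otimes v)(\phi) = f(\phi(v))$, the map $c^M$ factors as the dual of this surjection, hence is injective.

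The main obstacle is item (4), the Peter--Weyl type decomposition of $H^\circ$. From item (3), each image $c^M(M^*\otimes M)$ is a subcoalgebra of $H^\circ$, and by item (2) this image is simple (as a coalgebra) and determines $M$ up to isomorphism. The nontrivial content is that the resulting map $\bigoplus_M M^*\otimes M \to H^\circ$ is an isomorphism, with the sum ranging over isomorphism classes of finite-dimensional irreducibles. This is a statement of cosemisimplicity of $H^\circ$, which holds in the settings relevant to this paper (for $H = \O(G)$ or $H = \OqG$ with $G$ semisimple and $q$ not a root of unity, all finite-dimensional representations are completely reducible). Given cosemisimplicity, the structure theorem for cosemisimple coalgebras decomposes $H^\circ$ as a direct sum of simple subcoalgebras; each such summand is $c^M(M^*\otimes M)$ for a unique finite-dimensional irreducible $M$, giving the desired isomorphism.
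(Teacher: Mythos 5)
The paper does not prove this lemma; it is recorded as background with a citation to Brown--Goodearl (Section~1.9), so there is no proof to compare against. Your proof is substantively correct: the verifications in items (1), (3), and (5) are exactly the right direct computations (in particular the coproduct identity via the dual-basis expansion of $h'v$ and the antipode identity via the contragredient action on $M^*$ are done correctly).

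Two remarks. For item (2), you obtain injectivity by identifying $c^M$ with the dual of the surjection $H \twoheadrightarrow \End(M)$ coming from Jacobson density/Burnside. The ``consequently'' in the statement hints at the closely related argument the lemma seems to have in mind: for $M$ finite-dimensional irreducible over $\C$, Schur gives $\End_H(M) = \C$, hence $M^*\otimes M$ is irreducible as an $H\times H$-module, and a nonzero equivariant map out of an irreducible must be injective. These are two faces of the same density statement, so the difference is one of emphasis rather than content.

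For item (4), you are right to flag that the displayed isomorphism requires $H^\circ$ to be cosemisimple, a hypothesis the lemma does not state. This is a genuine caveat, not a stylistic one: for $H = \C[x]$ the functional $p \mapsto p'(0)$ lies in $H^\circ$ (it annihilates the cofinite ideal $(x^2)$) yet is not a matrix coefficient of any finite-dimensional irreducible (equivalently semisimple) $H$-module, so the asserted isomorphism fails for general Hopf algebras. In the settings where the paper actually applies the lemma --- $H = U\g$, or $H = U_q(\g)$ with matrix coefficients taken from the semisimple category $\cqg$ of type $\mathbf 1$ representations at $q$ not a root of unity --- the required complete reducibility holds, and your appeal to the structure theorem for cosemisimple coalgebras then finishes the argument. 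It would be cleaner to make the cosemisimplicity hypothesis explicit in the statement rather than leave it implicit.
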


Let  $F$ be a family of finite-dimensional $H$-modules, and let $\hat F$ denote the closure of $F$ under finite direct sums and tensor products. 

\begin{lemma}\label{lem:family}  Let $A$ be the subalgebra of $H^\circ$ generated by all matrix coefficients of elements in $F$. Then $A$ is a sub-bialgebra of $H^\circ$, and, as an $H\times H$-module, is the directed union of  the spaces of matrix coefficients for $M \in \hat F$. Moreover, if $F$ is closed under duals, then $A$ is a sub-Hopf algebra of $H^\circ$. \end{lemma}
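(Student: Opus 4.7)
The plan is to prove the three assertions — closure under sum and product (subalgebra), closure under coproduct and counit (sub-bialgebra), and closure under antipode when $F$ is closed under duals (sub-Hopf algebra) — by direct application of the formulas in Lemma \ref{lem:matrixcoeff}(\ref{lem:matrixcoeffbasics}). The strategy is to identify $A$ with the subspace
$$B := \sum_{M \in \hat F} c^M(M^* \otimes M) \subseteq H^\circ,$$
and then read off each structural property from the corresponding formula, together with the $H \times H$-equivariance of each $c^M$ provided by Lemma \ref{lem:matrixcoeff}(2).

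First I would check $A = B$. The direct sum formula $c^M_{f,v} + c^N_{g,w} = c^{M\oplus N}_{(f,g),(v,w)}$ shows that for $M,N \in \hat F$ the sum $c^M(M^* \otimes M) + c^N(N^* \otimes N)$ is contained in $c^{M \oplus N}((M \oplus N)^* \otimes (M \oplus N))$; since $\hat F$ is closed under direct sums, this exhibits $B$ as a directed union (not merely a sum) of the subspaces $c^M(M^* \otimes M)$. The tensor product formula $c^M_{f,v} \cdot c^N_{g,w} = c^{M\otimes N}_{f \otimes g, v\otimes w}$ then shows that $B$ is closed under multiplication, and, dually, that every matrix coefficient of a module in $\hat F$ is expressible as a sum of products of matrix coefficients of members of $F$. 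Combined with the fact that $B$ contains all generators of $A$, this yields $A = B$, which also gives the asserted $H \times H$-module description.

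Next, the coproduct formula $\Delta(c^{M}_{f,v}) = \sum_i c^{M}_{f, v_i} \otimes c^{M}_{f_i, v}$ shows $\Delta(B) \subseteq B \otimes B$, and the counit formula $\epsilon(c^M_{f,v}) = f(v) \in \C$ lands in scalars, so $A = B$ is a sub-bialgebra of $H^\circ$. Finally, if $F$ is closed under duals, then so is $\hat F$, since $(M \oplus N)^* \cong M^* \oplus N^*$ and $(M \otimes N)^* \cong N^* \otimes M^*$. The antipode formula $S(c^M_{f,v}) = c^{M^*}_{v,f}$ therefore sends each $c^M(M^* \otimes M)$ into $c^{M^*}(M \otimes M^*) \subseteq B$, so $A$ is a sub-Hopf algebra.

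I do not expect a serious obstacle: the proof is essentially formal, extracted from the five identities in Lemma \ref{lem:matrixcoeff}(\ref{lem:matrixcoeffbasics}). The only bookkeeping point is to verify that $\hat F$ is genuinely directed under $M \leq M \oplus N$, so that $B$ is a filtered union rather than merely a sum of subspaces; this is immediate from the closure of $\hat F$ under direct sums.
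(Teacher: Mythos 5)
The paper states this lemma without proof; your argument is correct and fills the gap in the expected way, identifying $A$ with the directed union $B=\bigcup_{M\in\hat F}c^M(M^*\otimes M)$ and reading the sub-bialgebra and sub-Hopf structure directly off the identities of Lemma \ref{lem:matrixcoeff}. The one point worth making explicit is that $B$ contains the unit $\epsilon_H\in H^\circ$ only if the trivial module lies in $\hat F$; this is implicit in the convention that ``closure under finite tensor products'' includes the empty one, and is automatic in the paper's application where $\mathcal V_0\in F$.
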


\subsection{Algebraic groups and quantum groups}\label{subsec:notation}

Let $G$ be a connected semisimple algebraic group over $\C$ with Lie algebra $\g$. Fix a Borel subgroup $B \subseteq G$ and a maximal torus $T \subseteq B$. Write $\mathfrak b$ and $\frakt$ for the corresponding Lie subalgebras of $\g$. The Borel subgroup $B$ has unipotent radical $N$, with Lie algebra $\mathfrak{n}$, and it has an opposite Borel subgroup $B^-$ uniquely characterized by the property that $B \cap B^- = T$. Let $r$ be the rank of $G$. Write $Z = Z(G)$ for the center of $G$, and $\Gad = G/Z(G)$ for the adjoint group of $G$. 

The weight lattice $\Lambda_W$ of $\g$ is generated by the fundamental weights $\omega_1, \dots, \omega_r$. The weight lattice contains the cone $\Lambda_W^+$ of dominant weights. The interior of $\Lambda_W^+$ is the set of regular dominant weights. Thus, dominant weights comprise the nonnegative linear combinations of the fundamental weights, and regular dominant weights comprise the positive linear combinations of fundamental weights.  Fix a set of positive simple roots $\{ \alpha_1, \dots, \alpha_r\}$ of $T$ relative to $B$. These generate the root lattice $\Lambda_R$, and we use the set $\Delta = \{1, \dots, r\}$ to index the positive simple roots. 

\begin{definition}\label{def:gorder} Define a partial order on $\Lambda_W$ by setting $\mu\leq \lambda$ whenever $\lambda-\mu$ is a nonnegative multiple of positive simple roots. Similarly, we write $\lambda < \mu$ if $\lambda \leq \mu$ and $\lambda \neq \mu$. This partial order is referred to as the dominance ordering on the weight lattice $\Lambda_W$ \end{definition}

The weight lattice $\Lambda_G$ of $G$ is the character lattice $X^*(T)$ of the maximal torus. We have inclusions of lattices: $\Lambda_R \subseteq \Lambda_G \subseteq \Lambda_W$. The set of isomorphism classes of finite-dimensional irreducible representations of $G$ are in bijection with points in the cone $\Lambda_G^+ = \Lambda_W^+ \cap \Lambda_G$ of dominant weights for $G$. We denote by $V_\lambda$ the irreducible representation corresponding to $\lambda \in \Lambda_G^+$. Points in the interior of $\Lambda_G^+$ are called regular dominant weights for $G$. In a context where the group $G$ is fixed, we write $\Lambda$ instead of $\Lambda_G$.

\begin{lemma}\label{lem:tensorproducts} If $V_\nu$ appears as an irreducible subrepresentation of the tensor product $V_\lambda \otimes V_\mu$, then $\nu \leq \lambda+ \mu$ for the dominance ordering. \end{lemma}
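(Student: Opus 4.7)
The plan is to use the standard fact that every weight of an irreducible highest weight module $V_\lambda$ is bounded above, in the dominance order, by its highest weight $\lambda$, and then transport this bound through the tensor product.

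First I would recall (or take as known) that $V_\lambda$ is generated as a $U(\g)$-module by a highest weight vector $v_\lambda$ of weight $\lambda$, and that applying products of negative simple root vectors to $v_\lambda$ produces weight vectors of weights of the form $\lambda - \sum c_i \alpha_i$ with $c_i \in \Z_{\geq 0}$. In particular, every weight $\lambda'$ occurring in $V_\lambda$ satisfies $\lambda' \leq \lambda$ in the dominance ordering of Definition \ref{def:gorder}, and similarly every weight $\mu'$ of $V_\mu$ satisfies $\mu' \leq \mu$.

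Next I would pass to the tensor product. The weight space decomposition is compatible with tensor products: the weights of $V_\lambda \otimes V_\mu$ are exactly the sums $\lambda' + \mu'$ where $\lambda'$ runs over weights of $V_\lambda$ and $\mu'$ over weights of $V_\mu$. Since $\leq$ is compatible with addition (the set of nonnegative integer combinations of simple roots is closed under addition), any such $\lambda' + \mu'$ satisfies $\lambda' + \mu' \leq \lambda + \mu$.

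Finally, suppose $V_\nu \subseteq V_\lambda \otimes V_\mu$ is an irreducible subrepresentation. Then $V_\nu$ contains a highest weight vector of weight $\nu$, which is in particular a nonzero weight vector of $V_\lambda \otimes V_\mu$ of weight $\nu$. By the previous paragraph, $\nu \leq \lambda + \mu$, as claimed. No serious obstacle is expected; the only subtlety is to cite (or briefly justify) that weights of a tensor product are sums of weights of the factors, which is immediate from the coproduct on $U(\g)$ acting diagonally on the tensor product and the fact that each $T$-weight space decomposes.
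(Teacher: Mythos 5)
Your argument is correct, and it is the standard one. The paper states this lemma without proof, treating it as a known fact from highest weight theory; your chain of observations — every weight of $V_\lambda$ is $\leq \lambda$ in the dominance order, weights of $V_\lambda \otimes V_\mu$ are sums of weights of the factors because the torus acts diagonally, the dominance order is compatible with addition, and the highest weight $\nu$ of any irreducible summand is in particular a weight of $V_\lambda \otimes V_\mu$ — is precisely what one would cite to justify it.
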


Given a subset $I \subseteq \Delta$, denote by $P_I$ the parabolic subgroup of $G$ whose Lie algebra $\mathfrak{p}_I$ is generated by $\mathfrak b$ and the root vectors corresponding to the roots $-\alpha_i$ for $i \in I$. Let $U_I$ be the unipotent radical of $P_I$ (with Lie algebra $\mathfrak{u}_I$), and let $L_I$ denote the subgroup of $P_I$ whose Lie algebra $\mathfrak{l}_I$ is generated by $\mathfrak t$ and the root vectors corresponding to the roots $\pm \alpha_i$ for $i \in I$. Then $L_I$ is a maximal reductive subgroup of $P_I$, and is called a Levi subgroup of $G$. We have $P_I = U_I \rtimes L_I$. Similarly, we define the opposite parabolic $P_I^-$ and its unipotent radical $U_I^-$. Observe that $P_I \cap P_I^- = L_I$. Write $\mathfrak{p}_I^-$ and $\mathfrak{u}_I^-$ for the corresponding Lie algebras.



We will be interested in the so-called standard Lie bialgebra structure on $\g$, described in \cite[Example 1.3.8]{ChariPressley}. It is a coboundary Lie bialgebra structure determined by a certain skew-symmetric element $r \in \bigwedge^2 \g$ that satisfies the classical Yang-Baxter equation. Write $\O(G)$ for the coordinate algebra of $G$. For $f \in \O(G)$, we write  $X^L f$ and $X^R f$ for the action of $X \in \g$ on $f$ by left- and right-invariant vector fields, respectively. 

\begin{prop}{\cite[Theorem 1.3.2, Section 2.2]{ChariPressley}}\label{prop:poissonlie}  Write $r$ $=$ $\sum_{i} a_i \otimes b_i$ for the element that determines for the standard Lie bialgebra structure on $\g$. There is a Poisson-Lie  bracket on $\O(G)$  given by  
 $$ \{ f_1, f_2\} = \sum_{i} ((a_i^L f_1) ( b_i^L f_2) - (a_i^R f_1) (b_i^R f_2)).$$ \end{prop}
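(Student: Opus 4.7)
The plan is to verify in turn the three defining properties of a Poisson–Lie bracket: skew-symmetry, the Leibniz rule, the Jacobi identity, and finally the multiplicativity of the associated bivector. The key observation throughout is that the formula realizes the bivector field $\pi = r^L - r^R$ on $G$, where $r^L$ and $r^R$ denote the left- and right-invariant bivector fields on $G$ extending $r \in \bigwedge^2 \g$ from the identity. Since $\pi$ is a difference of a left-invariant and a right-invariant bivector, many computations split into independent $L$- and $R$-pieces.

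First I would dispatch the easy items. Skew-symmetry of the bracket follows from the hypothesis $r \in \bigwedge^2 \g$: swapping $f_1$ and $f_2$ and relabeling the dummy index exchanges $a_i \leftrightarrow b_i$, which produces an overall sign by the skew-symmetry of $r$. The Leibniz rule is immediate because $X^L$ and $X^R$ are derivations of $\O(G)$ for every $X \in \g$, so each of the four products in $\{f_1 f_2, f_3\}$ expands by the usual product rule, giving $f_1\{f_2,f_3\} + \{f_1,f_3\} f_2$.

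The Jacobi identity is the main technical step, and I expect it to be the principal obstacle. The strategy is to expand $\sum_{\mathrm{cyc}} \{f_1, \{f_2, f_3\}\}$ using the formula twice and then collect terms according to whether they involve $L$- or $R$-derivatives. The crucial simplification is that left- and right-invariant vector fields commute, i.e.\ $[X^L, Y^R] = 0$ for all $X, Y \in \g$, so the mixed $LR$ and $RL$ terms telescope and cancel in the cyclic sum. The pure $LLL$ terms assemble, after using $[X^L, Y^L] = [X,Y]^L$, into the evaluation on $f_1, f_2, f_3$ of the left-invariant bivector associated to the Schouten bracket
\[
[[r,r]] = [r_{12}, r_{13}] + [r_{12}, r_{23}] + [r_{13}, r_{23}] \in \bigwedge^3 \g,
\]
and similarly for the $RRR$ terms (with the opposite sign coming from $[X^R, Y^R] = -[X,Y]^R$). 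Both terms vanish because the standard $r$-matrix satisfies the classical Yang–Baxter equation $[[r,r]] = 0$; this is where the hypothesis on $r$ enters in an essential way.

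Finally, to see that this Poisson bracket makes $G$ into a Poisson–Lie group, I would verify that the multiplication $m\colon G \times G \to G$ is a Poisson map, equivalently that the bivector $\pi$ is multiplicative: $\pi_{xy} = L_{x*}\pi_y + R_{y*}\pi_x$. For $\pi = r^L - r^R$ this is automatic: by definition $r^L_{xy} = L_{x*} r^L_y$ and $r^R_{xy} = R_{y*} r^R_x$, while $r^L$ and $r^R$ agree at the identity, so $L_{x*} r^R_y = R_{y*} r^L_x$ after translating both sides back to $e$. Substituting these identities yields the multiplicativity relation, completing the verification. I would then simply cite \cite[Example 1.3.8]{ChariPressley} and Proposition~\ref{prop:poissonlie}'s source for the assertion that the element $r$ assembled from the standard data indeed lies in $\bigwedge^2 \g$ and solves the CYBE.
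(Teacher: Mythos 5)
The paper does not prove this proposition; it simply cites \cite[Theorem 1.3.2, Section 2.2]{ChariPressley}, so there is no internal argument to compare against. Your overall structure — identify the bivector $\pi = r^L - r^R$, check skew-symmetry and the Leibniz rule, reduce the Jacobi identity to the Schouten bracket $[\pi,\pi]$, observe that the $LR$ cross-terms die because left- and right-invariant vector fields commute, and finish with multiplicativity — is exactly the standard Drinfeld/Sklyanin argument and is the right strategy.

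There is, however, a genuine error in the Jacobi step. You claim that the $LLL$ and $RRR$ contributions each vanish because $[[r,r]] = 0$. For the \emph{skew-symmetric} standard $r$-matrix of a simple Lie algebra, $[[r,r]]$ is \emph{not} zero: it is a nonzero multiple of the canonical ad-invariant element of $\bigwedge^3 \g$ built from the Killing form. (Already for $\mathfrak{sl}_2$ with $r = E \wedge F$ one computes $[r_{12},r_{13}] + [r_{12},r_{23}] + [r_{13},r_{23}] \neq 0$.) What is true, and what the argument actually requires, is the modified classical Yang--Baxter equation: $[[r,r]]$ is ad-invariant. The correct bookkeeping gives
\begin{equation*}
[\pi,\pi] \;=\; [r^L,r^L] + [r^R,r^R] \;=\; \bigl([[r,r]]\bigr)^L - \bigl([[r,r]]\bigr)^R,
\end{equation*}
with the sign on the $R$-part coming from $[X^R,Y^R] = -[X,Y]^R$ as you noted. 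This difference vanishes not because each term vanishes, but because ad-invariance of $[[r,r]]$ forces its left- and right-invariant extensions to coincide as trivector fields on $G$; the cancellation is \emph{between} the $L$ and $R$ pieces, not within each. This is precisely the ``coboundary Lie bialgebra'' hypothesis that the paper's subsequent Remark invokes, and it is the condition you should cite rather than the unmodified CYBE. (The paper's phrase ``satisfies the classical Yang--Baxter equation'' for the skew-symmetric $r$ is itself a slight imprecision, but your proof should not inherit it, since the stronger condition is simply false.) With that one correction, the sketch is sound.
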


\begin{rmk} The same formula gives a Poisson-Lie structure on $G$ for any $r \in \bigwedge^2 \g$ that defines a coboundary Lie bialgebra structure on $\g$. We note that when $\g$ is simple, all Lie bialgebra structures are coboundary \cite[Example 2.1.7]{ChariPressley}.  \end{rmk}

The standard Lie bialgebra structure on $\g$ admits a quantization, which leads to the quantized enveloping algebra $\uqg$. For the definition of $\uqg$, we follow \cite[Chapter I.6]{BrownGoodearl} and \cite[Chapter 6]{KlimykSchmudgen}. Fix $q \in \C^\times$. We will assume throughout that $q$ is not a root of unity.  Let $C = (a_{ij})$ be the Cartan matrix of $\g$. For $i= 1, \dots, r$, set $d_i = (\alpha_i, \alpha_i)/2$, where $(,) : \g \times \g \rightarrow \C$ is the Killing form, and set $q_i = q^{d_i}$. 

\begin{definition} The Drinfeld-Jimbo quantized enveloping algebra $\uqg$ of $\g$ is defined as the algebra generated by elements $E_1, \dots, E_r, F_1, \dots, F_r, K_1^{\pm 1}, \dots, K_r^{\pm 1}$, with relations 
$$ K_i K_j = K_j K_i, \quad K_i E_j K_i\inv = q_i^{a_{ij}} E_j, \quad K_i F_j K_i\inv = q_i^{-a_{ij}} F_j, \quad E_i F_j - F_j E_i = \delta_{ij} \frac{K_i - K_i\inv}{q_i - q_i\inv},$$
and the quantum Serre relations, which we omit here. \end{definition}

\begin{prop}\label{prop:uqghopfstr} The algebra $\uqg$ has the following Hopf algebra structure:
\begin{align*}
\Delta(K_i) &= K_i \otimes K_i &\qquad \epsilon(K_i) &=1 & \qquad S(K_i) &= K_i\inv \\
\Delta(E_i) &= E_i \otimes 1 +  K_i \otimes E_i  &\qquad \epsilon(E_i) &=0 & \qquad S(E_i) &= -K_i\inv E_i \\
\Delta(F_i) &= F_i \otimes K_i\inv  + 1 \otimes F_i &\qquad \epsilon(F_i) &=0 & \qquad S(F_i) &= -F_i K_i 
\end{align*} \end{prop}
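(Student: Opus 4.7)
The plan is to promote $\Delta$, $\epsilon$ on generators to algebra homomorphisms $U_q(\g) \to U_q(\g) \otimes U_q(\g)$ and $U_q(\g) \to \C$ via the universal property of the free algebra, and similarly to promote $S$ to an algebra anti-homomorphism $U_q(\g) \to U_q(\g)^{\mathrm{op}}$. Once these are known to be well-defined, the Hopf algebra axioms (coassociativity, counit, antipode) only need to be verified on the generators $K_i^{\pm 1}, E_i, F_i$, because each axiom expresses the equality of two algebra (anti-)homomorphisms on $U_q(\g)$.

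The bulk of the work is therefore checking that the proposed formulas respect every defining relation. For the coproduct, the relation $K_iK_j = K_jK_i$ is immediate since the $K_i \otimes K_i$ all commute among themselves. The relations $K_iE_jK_i^{-1} = q_i^{a_{ij}}E_j$ and $K_iF_jK_i^{-1} = q_i^{-a_{ij}}F_j$ follow by expanding $\Delta(K_i)\Delta(E_j)\Delta(K_i^{-1})$ and using only the same relation in each tensor factor. The commutator relation $E_iF_j - F_jE_i = \delta_{ij}(K_i - K_i^{-1})/(q_i-q_i^{-1})$ is the most delicate of the non-Serre relations: one expands $\Delta(E_i)\Delta(F_j) - \Delta(F_j)\Delta(E_i)$, notes that the cross terms $E_i \otimes F_j$, $K_iF_j \otimes E_iK_j^{-1}$, etc., cancel thanks to the $K_i E_j K_i^{-1}$ and $K_iF_jK_i^{-1}$ relations, and checks that what remains equals $\delta_{ij}(\Delta(K_i) - \Delta(K_i^{-1}))/(q_i - q_i^{-1})$. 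Analogous bookkeeping, together with the fact that $\epsilon$ sends every generator except the $K_i$ to $0$, yields the verification for $\epsilon$. For $S$, one must remember to reverse the order of multiplication; the identities $S(K_i)S(E_i) = q_i^{-a_{ij}}S(E_i)S(K_i)$ and so on follow quickly from $K_i^{-1}(-K_i^{-1}E_i) = -q_i^{-a_{ii}}\cdots$.

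With the algebra-morphism properties of $\Delta,\epsilon$ and the anti-morphism property of $S$ in hand, the Hopf axioms reduce to finite checks. Coassociativity and the counit axiom on $K_i$ are trivial; on $E_i$ both sides of coassociativity give $E_i \otimes 1 \otimes 1 + K_i \otimes E_i \otimes 1 + K_i \otimes K_i \otimes E_i$, and the counit condition yields $E_i$ on either side; $F_i$ is analogous. The antipode axiom on $E_i$ reads $S(E_i) \cdot 1 + S(K_i) \cdot E_i = -K_i^{-1}E_i + K_i^{-1}E_i = 0 = \epsilon(E_i)$, and on $F_i$ it gives $-F_iK_i\cdot K_i^{-1} + 1 \cdot F_i = 0$; the other side of the axiom is symmetric, and the $K_i$ case is immediate.

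The main obstacle is verifying compatibility with the quantum Serre relations, which are long polynomial identities in $E_i, E_j$ (and similarly $F_i, F_j$) weighted by $q$-binomial coefficients. The standard way to handle this without a page of combinatorics is to use the quantum Leibniz rule $\Delta(E_i) = E_i \otimes 1 + K_i \otimes E_i$ iteratively and to observe that the twisted adjoint operators $\mathrm{ad}_q E_i : x \mapsto E_i x - K_i x K_i^{-1} E_i$ satisfy a $q$-analogue of the Leibniz rule compatible with $\Delta$; the Serre polynomial can then be written as $(\mathrm{ad}_q E_i)^{1-a_{ij}}(E_j)$, and its vanishing in one tensor factor forces vanishing of the full image of $\Delta$ applied to that polynomial. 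The verifications for $\epsilon$ and $S$ on the Serre relations are easier: $\epsilon$ kills each monomial in $E_i, E_j$, while $S$ maps the Serre polynomial, up to a global sign and a product of $K_i^{-1}$ factors, to the corresponding Serre polynomial in the $-K_i^{-1}E_i$, which vanishes in $U_q(\g)$.
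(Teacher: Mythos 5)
The paper states this proposition without proof, referring the reader to \cite[Chapter I.6]{BrownGoodearl} and \cite[Chapter 6]{KlimykSchmudgen} for the standard Hopf structure on $\uqg$, so there is no in-paper argument to compare against. Your proposal is the standard textbook approach, and the individual verifications you carry out are correct: the universal-property reduction, the observation that the cross terms in $\Delta(E_i)\Delta(F_j) - \Delta(F_j)\Delta(E_i)$ cancel because $q_i^{a_{ij}} = q_j^{a_{ji}}$, the identity $(K_i - K_i^{-1})\otimes K_i^{-1} + K_i \otimes (K_i - K_i^{-1}) = K_i\otimes K_i - K_i^{-1}\otimes K_i^{-1}$ for the $i=j$ case, and the antipode checks on each generator all go through.

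The one place where the argument is noticeably weaker than the rest is the compatibility with the quantum Serre relations. The sentence ``its vanishing in one tensor factor forces vanishing of the full image of $\Delta$ applied to that polynomial'' does not, as stated, constitute a proof: $\Delta(S_{ij})$ is not literally of the form (something) $\otimes\, S_{ij}$ or $S_{ij}\,\otimes$ (something) in the free algebra. The way to make your idea precise is to first verify the Hopf axioms on the intermediate algebra $\widetilde U_q(\g)$ defined by the generators and the four non-Serre relations only (your checks already do this), and then show that each Serre element $S_{ij} = (\mathrm{ad}_q E_i)^{1-a_{ij}}(E_j)$ is \emph{skew-primitive} in $\widetilde U_q(\g)$, i.e.\ $\Delta(S_{ij}) = S_{ij}\otimes 1 + K_i^{1-a_{ij}}K_j \otimes S_{ij}$, together with $\epsilon(S_{ij}) = 0$ and $S(S_{ij})$ proportional to $S_{ij}$ up to grouplikes. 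This skew-primitivity follows from the twisted Leibniz rule for $\mathrm{ad}_q E_i$ that you invoke, and it is precisely what shows the Serre elements generate a Hopf ideal, so that $\Delta$, $\epsilon$, $S$ descend to $\uqg$. One small typo: in your check that $S$ respects the $K_iE_jK_i^{-1}$ relation, the exponent should be $-a_{ij}$, not $-a_{ii}$.
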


\begin{definition} We define the following subalgebras of $U_q(\g)$:
 \begin{itemize}
  \item $U_q(\mathfrak{p}_I)$ is generated by $K_i^{\pm 1}$ and $E_i$ for $i = 1, \dots, r$, as well as $F_i$ for $i \in I$. 
  \item  $U_q(\mathfrak{p}_I^-)$ is generated by $K_i^{\pm 1}$ and $F_i$ for $i = 1, \dots, r$, as well as $E_i$ for $i \in I$.  
  \item  $U_q(\mathfrak{u}_I)$ is generated by $E_i$ for $i \notin I$.
  \item  $U_q(\mathfrak{u}_I^-)$ is generated by $F_i$ for $i \notin I$. 
  \item  $U_q(\mathfrak{l}_I)$ is generated by $K_i^{\pm 1}$ for $i = 1, \dots, r$, as well as $E_i$ and  $F_i$ for $i \in I$.  
 \end{itemize}  \end{definition}

We will be interested exclusively in type $\mathbf 1$ representations of $\uqg$. Let  $\mathcal C_q(\g)$ denote the category of finite-dimensional $\uqg$-modules of type $\mathbf 1$. 

\begin{prop} The category $\mathcal C_q(\g)$ is a semisimple rigid tensor subcategory of $\uqg\dmod$ whose irreducible objects are in bijection with the set $\Lambda_W^+$ of dominant weights of $\g$. \end{prop}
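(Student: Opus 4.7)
The plan is to reduce each of the four claims (tensor subcategory, rigidity, classification, semisimplicity) to standard facts from the highest-weight representation theory of $\uqg$ at generic $q$, as developed in \cite[Chapters 10 and 11]{ChariPressley} and \cite[Chapter 7]{KlimykSchmudgen}. The underlying philosophy is that when $q$ is not a root of unity, the representation theory of $\uqg$ closely parallels that of $\g$, with the $K_i$-eigenvalues replacing integer weights.

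First, I would verify that $\cqg$ is closed under tensor products and contains the unit. For finite-dimensional modules $M$ and $N$ of type $\mathbf 1$, write $M = \bigoplus_\mu M_\mu$ and $N = \bigoplus_\nu N_\nu$ for their weight decompositions. Using the formula $\Delta(K_i) = K_i \otimes K_i$ from Proposition \ref{prop:uqghopfstr}, one computes $K_i \cdot (v \otimes w) = q_i^{\mu(h_i) + \nu(h_i)} (v \otimes w)$ for $v \in M_\mu$, $w \in N_\nu$, so $M \otimes N$ is again type $\mathbf 1$ with weights in $\Lambda_W$. The trivial module is manifestly type $\mathbf 1$, giving a monoidal subcategory.

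Next, rigidity: the antipode from Proposition \ref{prop:uqghopfstr} equips the linear dual of a finite-dimensional module with a $\uqg$-action, and $S(K_i) = K_i^{-1}$ shows the weight decomposition of $M^*$ is $\bigoplus_\mu (M_\mu)^*$, placed in weight $-\mu$. Hence $M^*$ is again type $\mathbf 1$. The usual evaluation and coevaluation maps, suitably twisted by a grouplike element associated to $2\rho$ for the left dual, promote $\cqg$ to a rigid tensor category.

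For the classification of irreducibles, I would use the triangular decomposition of $\uqg$ to show that any simple object in $\cqg$ admits a highest-weight vector killed by all $E_i$. Finite-dimensionality forces this weight to lie in $\Lambda_W^+$, since otherwise the orbit under the $F_i$'s would be infinite. Conversely, for each $\lambda \in \Lambda_W^+$, one constructs a Verma module $M_q(\lambda)$ and exhibits its unique simple quotient $V_\lambda$ by imposing relations of the form $F_i^{\langle \lambda, \alpha_i^\vee \rangle + 1} \cdot v_\lambda = 0$; a dimension count (or a Lusztig-type specialization argument) shows $\dim V_\lambda$ matches the classical Weyl dimension and that $V_\lambda$ is of type $\mathbf 1$.

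The hard part is semisimplicity, which crucially uses that $q$ is not a root of unity. I would establish it by invoking the quantum Casimir element $C_q$ in the center of $\uqg$, which acts by distinct scalars $c_\lambda, c_\mu$ on non-isomorphic simples $V_\lambda, V_\mu$ precisely because $q^k \neq 1$ for all nonzero integers $k$. The eigenspace decomposition under $C_q$ then splits any extension, giving $\Ext^1_{\cqg}(V_\lambda, V_\mu) = 0$ for $\lambda \neq \mu$; the self-extension case reduces to the standard argument that an extension of $V_\lambda$ by itself would carry a strictly larger weight space than $V_\lambda \oplus V_\lambda$, contradicting the dimension count. Together, these results imply that $\cqg$ is semisimple with simples parametrized by $\Lambda_W^+$.
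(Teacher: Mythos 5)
The paper gives no proof of this proposition: it appears in the preliminaries (Section~\ref{subsec:notation}) as a recalled standard fact, with the relevant background attributed to \cite{ChariPressley}, \cite{BrownGoodearl}, and \cite{KlimykSchmudgen}. So there is no ``paper's proof'' to compare against, only the literature you yourself cite. Your sketch assembles the right standard ingredients in the right order: the monoidal structure from $\Delta(K_i)=K_i\otimes K_i$, rigidity from the antipode (with the grouplike twist by $K_{2\rho}$ for one of the two duals), the highest-weight classification, and the quantum Casimir for separating non-isomorphic simples. All of that is sound at the level of detail attempted.

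There is, however, a genuine error in the self-extension step. You claim that a nontrivial extension of $V_\lambda$ by itself ``would carry a strictly larger weight space than $V_\lambda\oplus V_\lambda$, contradicting the dimension count.'' This is false: for any short exact sequence $0\to V_\lambda\to M\to V_\lambda\to 0$ of type $\mathbf 1$ modules, $M$ has \emph{exactly} the same weight-space dimensions as $V_\lambda\oplus V_\lambda$, since weight spaces are additive in short exact sequences. There is nothing to contradict. The standard argument runs in the opposite direction and exploits the \emph{absence} of higher weights: choose a weight-$\lambda$ vector $m\in M$ lifting a highest-weight vector of the quotient. Since $\lambda+\alpha_i$ is not a weight of $M$ (it is not a weight of $V_\lambda\oplus V_\lambda$), one has $E_i\, m=0$ for all $i$. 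Then the submodule $\uqg\cdot m$ is a finite-dimensional highest-weight module of highest weight $\lambda$, hence isomorphic to $V_\lambda$, and it maps isomorphically onto the quotient, splitting the sequence. This gives $\Ext^1_{\cqg}(V_\lambda,V_\lambda)=0$; together with the Casimir separation of $V_\lambda$ and $V_\mu$ for $\lambda\neq\mu$ (where $(C_q-c_\lambda)(C_q-c_\mu)$ annihilates any extension and $c_\lambda\neq c_\mu$ forces an eigenspace splitting into submodules), one obtains semisimplicity. With that repair your sketch is correct.
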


For $\lambda \in \Lambda_W^+$, we write ${\mathcal V}_\lambda$ for the corresponding irreducible representation of $\uqg$. Let $\mathcal C_q(G)$ denote the subcategory of $\mathcal C_q(\g)$ generated under finite direct sums and tensor products by the irreducible representations whose highest weights lie in $\Lambda_G^+$. Using Lemma \ref{lem:family}, we make the following definition.

\begin{definition}\label{def:oqg} The quantized coordinate algebra $\OqG$ of $G$ is defined as sub-Hopf algebra of $\uqg^\circ$ generated by the matrix coefficients of all modules in $\mathcal C_q(G)$. \end{definition}

The algebra $\OqG$ is a flat deformation of the algebra $\OG$, and quantizes the Poisson-Lie structure on $\OG$ arising from the standard Lie bialgebra structure on $\g$.

\begin{lemma}\label{lem:qtensorproducts} If $\mathcal V_\nu$ appears as an irreducible subrepresentation of the tensor product $\mathcal V_\lambda \otimes \mathcal V_\mu$, then $\nu \leq \lambda+ \mu$ in the dominance ordering on $\Lambda_W$. \end{lemma}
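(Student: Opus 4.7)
The plan is to mimic the classical proof of Lemma \ref{lem:tensorproducts}, using the fact that type $\mathbf 1$ representations of $\uqg$ admit a weight space decomposition that closely parallels the one for $\g$, since $q$ is not a root of unity.

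First I would recall the structural fact that every $V \in \mathcal C_q(\g)$ decomposes as a direct sum of weight spaces $V = \bigoplus_{\eta \in \Lambda_W} V_\eta$, where $V_\eta = \{ v \in V \ | \ K_i v = q_i^{\langle \eta, \alpha_i^\vee\rangle} v \text{ for all } i\}$; for type $\mathbf 1$ representations the exponents come from $\Lambda_W$. This is standard and can be taken from \cite[Chapter 6]{KlimykSchmudgen}. Moreover, the irreducible module $\mathcal V_\lambda$ is generated by a highest weight vector $v_\lambda \in (\mathcal V_\lambda)_\lambda$ on which all $E_i$ act by zero, and by applying the lowering operators $F_i$ one sees that every nonzero weight space of $\mathcal V_\lambda$ is of the form $(\mathcal V_\lambda)_\eta$ with $\eta = \lambda - \sum_i n_i \alpha_i$, $n_i \in \Z_{\geq 0}$, i.e.\ $\eta \leq \lambda$ in the dominance order of Definition \ref{def:gorder}.

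Next I would observe that, because the coproduct in Proposition \ref{prop:uqghopfstr} satisfies $\Delta(K_i) = K_i \otimes K_i$, the tensor product of weight modules is again a weight module with
\[
(\mathcal V_\lambda \otimes \mathcal V_\mu)_\nu = \bigoplus_{\eta + \eta' = \nu} (\mathcal V_\lambda)_\eta \otimes (\mathcal V_\mu)_{\eta'}.
\]
Combining this with the previous step, every weight $\nu$ occurring in $\mathcal V_\lambda \otimes \mathcal V_\mu$ is the sum $\eta + \eta'$ of weights with $\eta \leq \lambda$ and $\eta' \leq \mu$, hence $\nu \leq \lambda + \mu$.

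Finally, if $\mathcal V_\nu$ appears as an irreducible subrepresentation of $\mathcal V_\lambda \otimes \mathcal V_\mu$, then its highest weight vector is in particular a weight vector of the tensor product of weight $\nu$, so $(\mathcal V_\lambda \otimes \mathcal V_\mu)_\nu \neq 0$, and the previous paragraph gives $\nu \leq \lambda + \mu$. There is no real obstacle here beyond invoking the standard weight theory of $\uqg$ (valid for $q$ not a root of unity); the proof is a verbatim transcription of the classical argument, with the coproduct $\Delta(K_i) = K_i \otimes K_i$ playing the role of the grouplike structure on $T$.
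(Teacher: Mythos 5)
The paper states Lemma \ref{lem:qtensorproducts} without proof, treating it as a standard consequence of the weight theory of $\uqg$ for $q$ not a root of unity (parallel to the unproved classical Lemma \ref{lem:tensorproducts}). Your argument is precisely the standard one being implicitly invoked: weight space decomposition of type $\mathbf 1$ modules, weights of $\mathcal V_\lambda$ all lie below $\lambda$ in the dominance order, and $\Delta(K_i) = K_i \otimes K_i$ makes weights additive under tensor product, so the highest weight $\nu$ of any irreducible summand satisfies $\nu \leq \lambda + \mu$. The reasoning is correct and complete; there is nothing to compare against since the paper omits the proof.
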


\subsection{$\Lambda$-graded algebras}\label{subsec:projbasic}

This section collects definitions and results concerning $\uProj$ categories for noncommutative rings graded by a lattice $\Lambda$. We present reformulations and refinements of constructions that appear in work of Artin and Zhang, and of Ginzburg, and are ultimately inspired by results of Serre \cite{ArtinZhang, Ginzburg, Serre}.

Let $\Lambda$ be a lattice, that is, a finitely generated torsion-free abelian group. Let $\Lambda^+$ be a subsemigroup of $\Lambda$ (i.e.\ a cone). We assume that $\Lambda^+$ has the following properties:

\begin{enumerate}
 \item For any $\lambda_1$ and $\lambda_2$ in $\Lambda$, the intersection $(\lambda_1 + \Lambda^+) \cap (\lambda_2 + \Lambda^+)$ is nonempty. 
 \item $\Lambda^+ \cap ( - \Lambda^+) = \{ 0\}$.
\end{enumerate}

\begin{definition}\label{def:lambdagraded} A $\Lambda$-graded algebra is a $\Lambda$-graded vector space $R = \bigoplus_{\lambda \in \Lambda} R_\lambda$ over $\C$ equipped with an associative $\C$-linear multiplication map $R \ot R \rightarrow R$ that restricts to a map $R_\lambda \otimes R_\mu \rightarrow R_{\lambda + \mu}$ for any $\lambda, \mu \in \Lambda$. 
\end{definition}

When $\Lambda = \Z$ and $\Lambda^+ = \Z_{\geq 0}$, we recover the usual notion of a graded algebra. The setting we will be most interested in is when $\Lambda = X^*(T)$ is the weight lattice of $G$ with respect to a maximal torus $T$, and $\Lambda^+$ is the cone of dominant weights.

\begin{definition} Let $R$ be a $\Lambda$-graded algebra. 
\begin{enumerate}
\item A graded left $R$-module is a $\Lambda$-graded vector space $M = \bigoplus_{\lambda \in \Lambda} M_\lambda$ equipped with an action of $R$ such the action map restricts to a map $R_\lambda \otimes M_\mu \rightarrow M_{\lambda + \mu}.$ The category of graded left $R$-modules is denoted $\Grmod(R)$ and its objects will henceforth be referred to simply as $R$-modules. 

\item An $R$-module $M$ is finitely generated if there exist elements $m_1 \in M_{\nu_1}, \dots, m_p \in M_{\nu_p}$, where $\nu_i \in \Lambda$, such that the map $$\bigoplus_{i=1}^p R_{\lambda - \nu_i} \rightarrow M_\lambda \ ; \qquad (r_i) \mapsto \sum_{i=1}^p r_i m_i$$ is surjective for all $\lambda \in \Lambda$.  The category of finitely generated graded left $R$-modules is denoted $\grmod(R)$, and it is a full subcategory of $\Grmod(R)$. 
\end{enumerate} \end{definition}

\begin{rmk} We will mostly be interested in the case where $R$ is noetherian and locally finite, i.e.\ $R_\lambda$ is finite-dimensional over $\C$ for every $\lambda \in \Lambda$. When $R$ is noetherian, the category $\grmod(R)$ is abelian. \end{rmk}

\begin{definition}
An $R$-module $M$ is called torsion if, for all $m$ in $M$, there exists  $\lambda \in \Lambda^+$ such that $R_\mu$ acts by zero on $m$ for any $\mu \in  \lambda + \Lambda^+$. The full subcategory of torsion modules (resp. finitely generated torsion modules) is denoted $\Tors(R)$ (resp. $\tors(R)$). \end{definition}

\begin{lemma}\label{lem:fgzero} If $M$ is finitely generated, then $M$ is torsion if and only if there exists $\lambda \in \Lambda$ such that  $M_\mu = 0$ for $\mu \in \lambda + \Lambda^+$.  \end{lemma}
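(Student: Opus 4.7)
The statement is a biconditional, and both directions rely on the cone-intersection property (1) of $\Lambda^+$ to pass from finitely many elements to a single dominant cone.

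For the easier direction ($\Leftarrow$), suppose $M_\mu = 0$ for all $\mu \in \lambda + \Lambda^+$ and take an arbitrary element; by splitting into homogeneous components it suffices to consider $m \in M_\nu$. For such $m$, the condition $R_\mu \cdot m \subseteq M_{\mu+\nu}$ shows that $R_\mu$ annihilates $m$ whenever $\mu + \nu \in \lambda + \Lambda^+$. I would then apply property (1) to the pair $0, \lambda - \nu \in \Lambda$ to produce some $\lambda' \in \Lambda^+ \cap (\lambda - \nu + \Lambda^+)$, which gives the inclusion $\lambda' + \Lambda^+ \subseteq \lambda - \nu + \Lambda^+$ and hence verifies the torsion condition for $m$.

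For the harder direction ($\Rightarrow$), start with homogeneous generators $m_1, \dots, m_p$ of degrees $\nu_1, \dots, \nu_p$. By hypothesis, for each $i$ there is $\lambda_i \in \Lambda^+$ such that $R_\mu$ annihilates $m_i$ for every $\mu \in \lambda_i + \Lambda^+$. First I would upgrade property (1) from pairs to finite families by iteration: given $\lambda_1, \lambda_2$, any element $\lambda_{12} \in (\lambda_1 + \Lambda^+) \cap (\lambda_2 + \Lambda^+)$ satisfies $\lambda_{12} + \Lambda^+ \subseteq (\lambda_1 + \Lambda^+) \cap (\lambda_2 + \Lambda^+)$, and inducting produces $\lambda^* \in \Lambda^+$ with $\lambda^* + \Lambda^+ \subseteq \bigcap_i (\lambda_i + \Lambda^+)$. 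Then $R_\mu \cdot m_i = 0$ for every $i$ and every $\mu \in \lambda^* + \Lambda^+$.

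Finally, since $M_\mu = \sum_i R_{\mu - \nu_i} \cdot m_i$ by finite generation, $M_\mu$ vanishes as soon as $\mu - \nu_i \in \lambda^* + \Lambda^+$ for every $i$, i.e.\ $\mu \in \bigcap_i (\lambda^* + \nu_i + \Lambda^+)$. Applying the same iterated cone intersection argument once more to $\lambda^* + \nu_1, \dots, \lambda^* + \nu_p \in \Lambda$ yields a single $\lambda \in \Lambda$ with $\lambda + \Lambda^+$ contained in this intersection, completing the proof. The main (very mild) obstacle is purely bookkeeping: property (1) is stated only pairwise and for pairs of cones based at arbitrary points of $\Lambda$ rather than of $\Lambda^+$, so one must be careful to invoke it in the form that produces a common cone based either in $\Lambda^+$ or in $\Lambda$ depending on the step. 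Property (2) plays no direct role here but is implicit in the consistency of the partial order that underlies the notion of torsion.
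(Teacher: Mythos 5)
Your proof is correct and follows essentially the same strategy as the paper's. The only (cosmetic) difference is in the forward direction: you apply the iterated cone-intersection argument twice — once to produce $\lambda^* \in \Lambda^+$ with $\lambda^* + \Lambda^+ \subseteq \bigcap_i(\lambda_i + \Lambda^+)$, and once more to get $\lambda$ with $\lambda + \Lambda^+ \subseteq \bigcap_i(\lambda^* + \nu_i + \Lambda^+)$ — whereas the paper collapses these into a single step by choosing $\lambda \in \bigcap_i(\nu_i + \lambda_i + \Lambda^+)$ directly and then reading off $\mu - \nu_i \in \lambda_i + \Lambda^+$ for $\mu \in \lambda + \Lambda^+$; the two arrive at the same place.
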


\begin{proof} Let  $m_1 \in M_{\nu_1}, \dots, m_p \in M_{\nu_p}$ be generators of $M$. If $M$ is torsion, then for each $m_i$, there exists $\lambda_i \in \Lambda^+$ such that $R_\mu$ acts by zero on $m_i$ for $\mu \in \lambda_i + \Lambda^+$. Choose $\lambda \in \bigcap_i (\nu_i + \lambda_i + \Lambda^+)$. For every $\mu \in \lambda + \Lambda^+$, we have $\mu \in \nu_i + \Lambda^+$ for all $i$. Therefore, the map  $ \bigoplus_{i=1 }^p R_{\mu - \nu_i} \rightarrow M_\mu$ taking $(r_i)$ to $\sum_{i=1}^p r_i m_i $ is surjective. On the other hand, $\mu - \nu _i \in \lambda - \nu_i + \Lambda^+ \subseteq \lambda_i + \Lambda^+$ for all $i$, so $r_i m_i = 0$ for all $i$. We conclude that $M_\mu = 0$ for all $\mu \in \lambda + \Lambda^+ .$ Conversely, suppose that there exists $\mu$ such that $M_\mu = 0$ for $\mu \in \lambda + \Lambda^+$. Let $m\in M_\nu$. Let $\lambda' \in (\nu + \Lambda^+) \cap  (\lambda + \Lambda^+)\subseteq \Lambda^+$. Then $R_{\mu} m = 0$ for all $\mu \in \lambda' - \nu + \Lambda^+$. \end{proof}

\begin{definition} A full subcategory $\mathcal T$ of an abelian category $\mathcal A$ is called dense\footnote{Dense subcategories are also referred to as Serre subcategories.}  if it is closed under extensions. In other words, for any short exact sequence $$0 \rightarrow M' \rightarrow M \rightarrow M'' \rightarrow 0$$ of objects in $\mathcal A$, the object $M$ belongs to $\mathcal T$ if and only if $M'$ and $M''$ both belong to $\mathcal T$. \end{definition}

\begin{lemma} Suppose $R$ is noetherian. Then the full subcategory of torsion objects in either of $\grmod(R)$ or $\Grmod(R)$ is dense. \end{lemma}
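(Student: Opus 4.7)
The plan is to verify the density condition at a short exact sequence $0 \to M' \to M \to M'' \to 0$ in the ambient category. The direction ``$M$ torsion implies $M'$ and $M''$ torsion'' holds without any hypothesis on $R$: for $m \in M' \subseteq M$, any $\lambda$ such that $R_\mu$ annihilates $m$ in $M$ works also in $M'$; for $\bar m \in M''$, lift to $m \in M$ and apply the torsion condition on $m$ directly.

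For the nontrivial converse in $\grmod(R)$, one can apply Lemma \ref{lem:fgzero} directly. Since $M'$ and $M''$ are finitely generated and torsion, there exist $\lambda', \lambda'' \in \Lambda$ with $M'_\mu = 0$ for $\mu \in \lambda' + \Lambda^+$ and $M''_\mu = 0$ for $\mu \in \lambda'' + \Lambda^+$. Using property (1) of the cone, pick $\lambda \in (\lambda' + \Lambda^+) \cap (\lambda'' + \Lambda^+)$; the short exact sequence then forces $M_\mu = 0$ on $\lambda + \Lambda^+$, and Lemma \ref{lem:fgzero} applied in the reverse direction to the f.g.\ module $M$ shows that $M$ is torsion.

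For $\Grmod(R)$, the modules $M'$ and $M''$ need not be finitely generated, so Lemma \ref{lem:fgzero} does not apply directly to them. The plan is to reduce element-by-element: given $m \in M$, decompose into its finitely many homogeneous components and reduce to the case $m \in M_\nu$, then consider the cyclic submodule $N := Rm \subseteq M$. Noetherianness of $R$ is precisely what guarantees that the submodule $N' := N \cap M'$ of the finitely generated module $N$ is itself finitely generated; the quotient $N'' := N/N'$ is f.g.\ as a quotient of $N$ and embeds into $M''$. Both $N'$ and $N''$ are torsion by the easy direction applied to $M'$ and $M''$, so the $\grmod(R)$ argument above applies to the sequence $0 \to N' \to N \to N'' \to 0$ to show that $N$ is torsion, and in particular $m \in N$ is eventually annihilated.

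The main obstacle — and the place where the noetherian hypothesis is essential — is this reduction to the finitely generated setting in the $\Grmod(R)$ case: without noetherianness, the intersection $N \cap M'$ could fail to be finitely generated and Lemma \ref{lem:fgzero} would be unavailable. All other ingredients are immediate from the definitions and from the cofinality property (1) of the positive cone $\Lambda^+$, which is what allows us to combine the two vanishing bounds coming from $M'$ and $M''$ into a single one for $M$.
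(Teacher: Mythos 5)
Your proof is correct and follows essentially the same strategy as the paper's: the easy direction is immediate, the $\grmod(R)$ case is handled by combining the vanishing bounds from Lemma~\ref{lem:fgzero} via the cofinality of $\Lambda^+$, and the general $\Grmod(R)$ case reduces to the cyclic submodule $Rm$, with noetherianness of $R$ invoked to conclude that $Rm \cap M'$ is finitely generated (the paper phrases this via the presentation $M = R/I$, $M' = J/I$, $M'' = R/J$, with noetherianness guaranteeing $J$ is finitely generated, which is the same point). The only organizational difference is that you first establish the $\grmod$ case in full and then reduce to it, whereas the paper performs the reduction and the degree computation in one pass; the mathematical content is identical.
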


\begin{proof} Fix a short exact sequence $0 \rightarrow M' \rightarrow M \rightarrow M'' \rightarrow 0$ in $\Grmod(R)$. If $M$ is torsion, then it is clear that $M'$ and $M''$ are both torsion. For the other direction, suppose $M'$ and $M''$ are torsion and let $m$ be a homogeneous element of $M$. It is enough to assume that $m$ is of degree zero and $M$ is generated by $m$, and thus we reduce to the setting of $\grmod(R)$. In this case, there are graded right ideals $I \subseteq J \subseteq R$ such that $M = R/I$, $M'' = R/J$, and $M' = J/I$. Since the image of $m$ in $M''$ is torsion, there exists $\lambda_1 \in \Lambda^+$ such that $R_{\lambda_1} m \subseteq M'$. Since $M'$ is a finitely generated torsion module, there exists $\lambda_2$ such that $M'_{\mu} = 0$ for $\mu \in \lambda_2 + \Lambda^+$. Let $\lambda \in (\lambda _1 + \Lambda^+) \cap ( \lambda_2 + \Lambda^+)$. Then, for any $\mu \in \lambda + \Lambda^+$, $R_\mu m \subset M'_\mu = 0$. 
\end{proof}

General results on the localization of abelian categories (see \cite[Section 4.3 and 4.4]{Popescu}) allow us to make the following definition, and justify the subsequent proposition. 

\begin{definition} Suppose $R$ is noetherian. We form  the quotient categories $\uProj(R) =$ $\Grmod(R)/ \Tors(R)$ and $\uproj(R)$ $=$ $\grmod(R)/ \tors(R)$. \end{definition}

\begin{prop} The category  $\uProj(R)$ is abelian. If $R$ is noetherian, then the category $\uproj(R)$ is abelian and noetherian. \end{prop}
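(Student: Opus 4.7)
The plan is to obtain both statements as direct applications of the general theory of Gabriel--Serre quotients of abelian categories by dense subcategories, using the density lemma established immediately above. Since no new categorical input is required, the role of the proof is essentially to verify the hypotheses of the relevant standard results (as found in \cite[Sections 4.3--4.4]{Popescu}) and to assemble them correctly.

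First I would check that $\Grmod(R)$ is abelian, with no noetherian hypothesis on $R$. This is immediate: kernels, cokernels, and images of $\Lambda$-graded $R$-module maps inherit a canonical $\Lambda$-grading, and the usual first-isomorphism-style verifications go through degree by degree. Then, once $R$ is assumed noetherian, the density lemma just proved shows that $\Tors(R)$ is a dense (Serre) subcategory of $\Grmod(R)$. The general Gabriel quotient construction yields that the quotient category $\uProj(R) = \Grmod(R)/\Tors(R)$ is abelian, with kernels and cokernels computed by taking kernels and cokernels in $\Grmod(R)$ and passing to the quotient. This handles the first assertion.

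Next I would address $\uproj(R)$ under the noetherian hypothesis. As already noted in the remark preceding the definition of torsion modules, noetherianness of $R$ implies that $\grmod(R)$ is an abelian subcategory of $\Grmod(R)$: a submodule of a finitely generated graded $R$-module is again finitely generated, so kernels and cokernels of maps between finitely generated modules remain finitely generated. Using again the density lemma, $\tors(R) = \Tors(R) \cap \grmod(R)$ is a dense subcategory of $\grmod(R)$. The Gabriel quotient $\uproj(R) = \grmod(R)/\tors(R)$ is therefore abelian. To see that $\uproj(R)$ is noetherian, I would invoke the standard fact that the quotient of a noetherian abelian category by a dense subcategory is noetherian: any ascending chain of subobjects of an object $M$ in $\uproj(R)$ lifts, via the localization functor, to an ascending chain of subobjects of a representative in $\grmod(R)$, which stabilizes because $\grmod(R)$ is noetherian, and the stabilization is preserved by the exact quotient functor.

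The step that requires the most care is the verification that $\tors(R)$ is truly dense in $\grmod(R)$, since the density argument was stated for $\grmod(R)$ and $\Grmod(R)$ simultaneously but it is essential here that being torsion and being finitely generated interact well; this is precisely what Lemma \ref{lem:fgzero} guarantees, so once that lemma is invoked the rest is formal. Everything else amounts to packaging standard Gabriel--Serre localization results, so no genuine obstacle arises beyond bookkeeping.
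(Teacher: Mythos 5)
Your proposal is correct and follows the same route as the paper, which gives no proof of its own but instead defers wholesale to the general Gabriel--Serre localization results of \cite[Sections 4.3 and 4.4]{Popescu} cited in the sentence immediately preceding the proposition. You are simply unpacking that citation: confirming (via the preceding density lemma) that $\Tors(R)$ and $\tors(R)$ are Serre subcategories, and then invoking the standard facts that quotients of abelian categories by Serre subcategories are abelian and that noetherianness of the ambient category passes to the quotient by lifting ascending chains through the saturation of subobjects.
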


We conclude this section with a discussion of the connection between $\uProj$ categories and quasicoherent sheaves on projective varieties. For the remainder of this section, suppose $R$ is a commutative $\Lambda$-graded algebra, and suppose $\Lambda = X^*(T)$ is the character lattice of a torus $T$. Thus, $\Spec(R)$ is a scheme with an action of the torus $T$. 

\begin{definition} The GIT quotient of $\Spec(R)$ by $T$ at the character $\lambda \in \Lambda$, denoted $\Spec(R) \text{ $\!$/$\! \!$/$\!$}_\lambda  T$, is the projective scheme associated with the $\Z_{\geq 0}$-graded algebra $\bigoplus_{n \geq 0} R_{n\lambda}$: 
$$\Spec(R) \text{ $\!$/$\! \!$/$\!$}_\lambda  T = \Proj\left( \bigoplus_{n \geq 0} R_{n\lambda} \right).$$
\end{definition}

\begin{notation} Let $A$ be a $\Z_{\geq 0}$-graded ring. We write $\Proj(A)$ for the projective scheme associated to $A$, and $\uProj(A)$ for the category of graded $A$-modules modulo torsion modules. In other words, roman font indicates a space, while sans serif font indicates a category.  \end{notation}

\begin{prop}\label{prop:qcohproj} Suppose the GIT quotients $ \Spec(R) \GIT_\lambda T$ and $ \Spec(R) \GIT_\mu T$ coincide for $\lambda, \mu$ in the interior of $\Lambda^+$. Then, for $\lambda $ in the interior of $\Lambda^+$, there is an equivalence of categories $$\QCoh( \Spec(R) \GIT_\lambda T) \simeq \uProj( R).$$ \end{prop}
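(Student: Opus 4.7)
The plan is to reduce to Serre's classical theorem via the Veronese construction, and then show that the multi-graded $\uProj$ category agrees with the $\Z_{\geq 0}$-graded one. Set $A := \bigoplus_{n \geq 0} R_{n\lambda}$; by the very definition of the GIT quotient, $\Proj(A) = \Spec(R)\GIT_\lambda T$, and after replacing $\lambda$ by a sufficient positive multiple if necessary (which leaves $\Proj$ unchanged), Serre's theorem yields $\QCoh(\Spec(R)\GIT_\lambda T) \simeq \uProj(A)$. It therefore suffices to construct an equivalence $\uProj(R) \simeq \uProj(A)$.

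The natural forward functor is
\[
\Phi \colon \uProj(R) \longrightarrow \uProj(A), \qquad \Phi(M) = \bigoplus_{n \geq 0} M_{n\lambda}.
\]
That $\Phi$ descends to the quotients by torsion is immediate from Lemma \ref{lem:fgzero}: if $M$ is finitely generated and torsion, so that $M_\mu = 0$ on some translate of $\Lambda^+$, then in particular $M_{n\lambda} = 0$ for all $n \gg 0$. To construct an inverse $\Psi$, I plan to argue geometrically, using that $\Lambda$-graded $R$-modules are the same data as $T$-equivariant quasicoherent sheaves on $\Spec R$ (via the identification $\Lambda = X^*(T)$). Modding out by torsion should correspond to restriction to the $\lambda$-semistable locus, and equivariant descent along the quotient map $\Spec(R)^{\mathrm{ss}} \to \Spec(R)\GIT_\lambda T$ then identifies $\uProj(R)$ with $\QCoh(\Spec(R)\GIT_\lambda T)$; composing with Serre's equivalence gives the desired $\Psi$, which one checks is inverse to $\Phi$.

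The hard part, and the reason the hypothesis on independence of the linearization is needed, is matching the abstract $\Lambda$-graded notion of torsion (asymptotic vanishing along the entire positive cone $\Lambda^+$) with the geometric notion of set-theoretic support on the $\lambda$-unstable locus. For $\lambda$ in the strict interior of $\Lambda^+$ the unstable locus is as small as possible, and the coincidence hypothesis ensures that this locus is the same for every interior linearization; this is exactly what allows asymptotic vanishing along any one ray into the interior to detect the same set-theoretic support as vanishing along all of $\Lambda^+$. Given this compatibility, fully faithfulness of $\Phi$ (recover $M_\mu$ as $\bigoplus_n M_{n\lambda+\mu}$ modulo torsion, using that multiplication by elements of $R_{n\lambda}$ is invertible on the semistable locus) and essential surjectivity (via the descent argument) follow without difficulty.
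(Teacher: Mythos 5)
Your first step matches the paper's exactly: both reduce via Serre's theorem to showing $\uProj(R) \simeq \uProj(A)$ where $A = \bigoplus_{n \geq 0} R_{n\lambda}$. For the second step, however, you take a genuinely different route. The paper's sketch simply invokes Theorem~4.5 of Artin and Zhang, observing that the shift $M \mapsto M[\lambda]$ is an ample autoequivalence of $\uProj(R)$; this is a purely categorical criterion. You instead try to build an inverse to $\Phi$ geometrically, by identifying $\Lambda$-graded $R$-modules with $T$-equivariant quasicoherent sheaves on $\Spec R$, restricting to the semistable locus, and descending along $\Spec(R)^{\mathrm{ss}} \to \Spec(R)\GIT_\lambda T$.

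The descent step is where the gap lies. The identification $\QCoh^T(\Spec(R)^{\mathrm{ss}}) \simeq \QCoh(\Spec(R)\GIT_\lambda T)$ requires the $T$-action on the semistable locus to be free -- i.e., the quotient map to be a $T$-torsor -- and this is not a consequence of the hypothesis that all interior linearizations give the same GIT quotient. A GIT quotient is a priori only a good quotient; when semistable points have nontrivial finite stabilizers, the equivariant category sees the quotient stack $[\Spec(R)^{\mathrm{ss}}/T]$, which is a $\mu$-gerbe over the coarse GIT space, and the two categories of sheaves genuinely differ. (Already for $\Lambda = \Z$ and $R$ generated in degree $2$ this distinction shows up: $\uProj(R)$ carries a residual $\mu_2$-grading that $\QCoh$ of the coarse Proj does not.) You also correctly flag, but do not resolve, the need to match $\Lambda^+$-torsion with set-theoretic support on the unstable locus -- this is the precise content the Artin--Zhang ampleness condition is packaging. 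In short, your geometric intuition is the right picture in the free-action case, but the argument as written does not cover the general situation the proposition addresses, whereas the paper's citation of Artin--Zhang keeps everything at the level of graded module categories and does not presuppose a geometric quotient (though the paper's own proof is explicitly labelled a sketch and likewise leaves the ampleness verification to the reader).
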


\begin{proof}[Sketch of proof.] Serre's theorem (Section \ref{subsec:whatis0}) implies that the categories $\QCoh( \Spec(R) \GIT_\lambda T)$ and $\uProj( \bigoplus_{n \geq 0} R_{n \lambda})$ are equivalent. The category $\uProj( R)$ has a shift functor $M \mapsto M[\lambda]$, which is ample in the sense of Artin and Zhang \cite[Section 4]{ArtinZhang}. Theorem 4.5 of Artin and Zhang implies that there is an equivalence of categories $\uProj( \bigoplus_{n \geq 0} R_{n \lambda}) \simeq \uProj(R).$ \end{proof}

\begin{example} Note that the quotient $G/N$ carries a residual action of $T$, so its algebra of global functions $\O(G/N)$ is graded by the weight lattice $\Lambda_G = X^*(T)$. Moreover, the GIT quotient of the affine closure $\Spec(\O(G/N))$ of $G/N$ by $T$ along a regular character $\lambda$ coincides with the flag variety $G/B$. By the preceding proposition, we have an equivalence of categories: $\QCoh(G/B) = \uProj(\O(G/N)).$ \end{example}

\begin{definition} We say that a Poisson bracket $\{, \}: R \otimes R \rightarrow R$ on $R$ is compatible with the $\Lambda$-grading if it restricts to a map $\{,\} : R_\lambda \otimes R_\mu \rightarrow R_{\lambda + \mu}.$
\end{definition}

\begin{lemma}\label{lem:poissonGIT} A Poisson bracket on $R$ that is compatible with the $\Lambda$-grading descends to a Poisson structure on the GIT quotient $\Spec(R)\GIT_\lambda  T$ for any $\lambda \in \Lambda$. \end{lemma}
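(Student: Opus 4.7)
The plan is to first reduce to checking the statement on the $\Z_{\geq 0}$-graded subalgebra $A = \bigoplus_{n\geq 0} R_{n\lambda}$, then verify that a grading-preserving Poisson bracket on a positively graded commutative algebra descends to a Poisson bracket on its $\Proj$.

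First I would observe that $A$ is closed under the Poisson bracket: if $a \in R_{n\lambda}$ and $b \in R_{m\lambda}$, then by compatibility $\{a,b\} \in R_{(n+m)\lambda} \subseteq A$. Since the Poisson bracket on $R$ is a biderivation, its restriction to $A$ is again a Poisson bracket, and this restricted bracket is compatible with the $\Z_{\geq 0}$-grading on $A$ in the sense that $\{A_n, A_m\} \subseteq A_{n+m}$. Thus the problem reduces to producing a Poisson structure on $\Proj(A)$ from such a bracket.

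Next I would work locally on the standard affine open cover. For each homogeneous $f \in A_d$ with $d > 0$, the affine open $D_+(f) \subseteq \Proj(A)$ has coordinate ring $A_{(f)} = (A[f^{-1}])_0$. I would extend the bracket uniquely from $A$ to the localization $A[f^{-1}]$ via the Leibniz rule, using the formula $\{f^{-1}, x\} = -f^{-2}\{f,x\}$; this is the standard extension of Poisson brackets to multiplicative localizations of commutative algebras, and the fact that it is well defined and satisfies the Jacobi identity is routine (see, e.g., the standard references on Poisson algebra). The key point is that if one declares $f^{-1}$ to have degree $-d$, then the extended bracket on $A[f^{-1}]$ still satisfies $\{A[f^{-1}]_m, A[f^{-1}]_n\} \subseteq A[f^{-1}]_{m+n}$, since the grading and the Leibniz-extended bracket are compatible by construction. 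Consequently the bracket restricts to a Poisson bracket on the degree-zero component $A_{(f)}$.

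Finally, I would check that these local Poisson structures glue. On a pairwise intersection $D_+(f) \cap D_+(g) = D_+(fg)$ the coordinate ring is $A_{(fg)}$, and all three localizations $A[f^{-1}]$, $A[g^{-1}]$, $A[(fg)^{-1}]$ inherit their brackets from a single bracket on $A$ via the same Leibniz formula. Hence the restrictions agree on overlaps, and the local Poisson structures assemble into a Poisson structure on $\Proj(A) = \Spec(R) \GIT_\lambda T$. The only mildly delicate step is the verification of the Jacobi identity for the Leibniz-extended bracket on $A[f^{-1}]$, but this is a standard and purely formal manipulation; I expect it to be the main bookkeeping obstacle, though not a conceptual one.
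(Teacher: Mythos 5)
Your proposal is correct and is essentially the same argument as the paper's: cover the GIT quotient by affine opens given by degree-zero parts of localizations at homogeneous elements, extend the bracket to each localization via the Leibniz rule $\{f^{-1},x\} = -f^{-2}\{f,x\}$, restrict to the degree-zero subalgebra, and glue. The only cosmetic difference is that you first pass to the $\Z_{\geq 0}$-graded subalgebra $A=\bigoplus_n R_{n\lambda}$ before localizing, whereas the paper localizes the full $\Lambda$-graded ring $R$ at $r\in R_{n\lambda}$ and then takes the degree-zero part of the $\Lambda$-grading; these yield the same coordinate rings on the affine cover.
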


\begin{proof} Let $X = \Spec(R)\GIT_\lambda T$ denote the GIT quotient. Then $X$ admits an open cover by affine schemes of the form $\Spec(R[r\inv]_0)$ where $r \in R_{n\lambda}$ for some $n\geq 1$, and $R[r\inv]_0$ denotes the degree 0 part of the $\Lambda$-graded algebra $R[r\inv]$. Fix such an $r$ and define a Poisson bracket on $R[r\inv]$ by setting $\{a, r\inv\} = -r^{-2} \{a, r\}$ for $a \in R$, and extending by linearity and skew-symmetry. This bracket restricts to a Poisson bracket on $R[r\inv]_0$, and thus we obtain a Poisson structure on each member of an open cover of $X$. These structures are compatible on intersections, and  glue to give a Poisson structure on all of $X$. \end{proof}


\section{The wonderful compactification}\label{sec:classical}

In this section, we give an exposition of the construction of the Vinberg semigroup and the wonderful compactification. We present the main constructions in Sections \ref{subsec:vinberg} to \ref{subsec:poissonvinb}, and devote Section \ref{subsec:sl2} to a detailed discussion of the example of $\SL_2$. 

\subsection{The Vinberg semigroup}\label{subsec:vinberg} Fix a connected semisimple algebraic group $G$ over $\C$ with Lie algebra $\g$. We denote by $\Ug$ the universal enveloping algebra of $\g$, abbreviate the weight lattice $\Lambda_G$ by $\Lambda$, and adapt notation from Section \ref{subsec:notation}.

\begin{theorem}[Peter-Weyl Theorem] \label{thm:peterweyl} The map of matrix coefficients 
 $$\phi: \bigoplus_{\lambda \in \Lambda^+} V_\lambda^* \otimes V_\lambda \stackrel{\sim}{\longrightarrow} \O(G); \qquad f \otimes v \mapsto [g \mapsto f(g\cdot v)]$$ defines an isomorphism of $\Ug$-bimodules. \end{theorem}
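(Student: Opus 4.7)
The plan is to verify, in order, that $\phi$ is a well-defined $\Ug$-bimodule map, that it is injective, and that it is surjective.

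Well-definedness and equivariance are nearly formal. Each summand $c^{V_\lambda} : V_\lambda^* \otimes V_\lambda \to \Ug^\circ$ actually lands in $\O(G) \subset \Ug^\circ$, because the $\Ug$-action on $V_\lambda$ integrates to a rational $G$-action, and hence $c^{V_\lambda}_{f,v}$ equals the regular function $g \mapsto f(gv)$. The $\Ug$-bimodule structure on $\O(G)$ is the one induced by left- and right-invariant vector fields, which corresponds to right and left translation by $G$; Lemma \ref{lem:matrixcoeff}(2) then ensures that each $c^{V_\lambda}$ intertwines the obvious $\Ug \otimes \Ug$-action on $V_\lambda^* \otimes V_\lambda$ with this bimodule structure.

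For injectivity, each summand $c^{V_\lambda}$ is injective by Lemma \ref{lem:matrixcoeff}(2), since $V_\lambda$ is irreducible. To show the sum is direct, I would identify the image of $c^{V_\lambda}$ with the $V_\lambda^* \boxtimes V_\lambda$-isotypic component of $\O(G)$ as a $G \times G$-representation, so images corresponding to distinct $\lambda$ intersect trivially. This is the classical Schur-orthogonality for non-isomorphic irreducibles.

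The main work is surjectivity. My plan is to fix a closed embedding $G \hookrightarrow \GL(W)$, possible since $G$ is affine algebraic. The coordinate ring $\O(\GL(W))$ is generated as an algebra by the matrix coefficients of $W$ together with $\det^{-1}$, itself a matrix coefficient of $\det W^*$; restricting to $G$ gives that $\O(G)$ is generated as an algebra by matrix coefficients of $W$ and $W^*$. Lemma \ref{lem:family} then upgrades this to the statement that $\O(G)$ is the linear span of matrix coefficients of all finite-dimensional rational $G$-representations built from $W$ and $W^*$ by sums, tensor products, and duals. Because $G$ is semisimple, each such representation decomposes as a finite direct sum of the $V_\lambda$, and Lemma \ref{lem:matrixcoeff}(3) writes a matrix coefficient of a direct sum as a sum of matrix coefficients of the summands, placing every element of $\O(G)$ in the image of $\phi$. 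I expect the crux of the argument to be this surjectivity step: establishing generation by matrix coefficients of a faithful representation uses the linear algebraic nature of $G$ in an essential way, and the subsequent reduction to the $V_\lambda$ crucially needs complete reducibility, which is where semisimplicity of $G$ enters.
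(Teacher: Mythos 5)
The paper does not actually prove Theorem~\ref{thm:peterweyl}; it records the algebraic Peter--Weyl theorem as a standard classical fact, to be used as the input to Definition--Proposition~\ref{def:filtration}. So there is no in-text proof to compare against, and the right thing is to judge your argument on its own.

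Your argument is correct, and it is a natural completion of the framework the paper sets up in Section~\ref{subsec:matrixcoef}: equivariance and the landing of $\phi$ in $\O(G)\subset \Ug^\circ$ are exactly Lemma~\ref{lem:matrixcoeff}(1)--(2) applied to the integrable modules $V_\lambda$ (integrability is built into the indexing set $\Lambda^+=\Lambda_G^+$); injectivity of each $c^{V_\lambda}$ together with directness of the sum is the Schur argument you describe, using that $V_\lambda^*\boxtimes V_\lambda$ is an irreducible $G\times G$-module and that pairwise non-isomorphic irreducible submodules always sit in direct sum; and surjectivity via a closed embedding $G\hookrightarrow \GL(W)$, Lemma~\ref{lem:family} with $F=\{W,W^*\}$, and complete reducibility is the standard route. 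Two small remarks on phrasing: Lemma~\ref{lem:family} closes $F$ only under sums and tensor products, not duals, so it is worth noting that with $F=\{W,W^*\}$ the closure $\hat F$ is already dual-stable, making your ``sums, tensor products, and duals'' claim correct but slightly overstated; and since $G$ is semisimple it has no nontrivial characters, so $\det|_G\equiv 1$ and one could in fact drop $W^*$ entirely from the generating family --- you keep it to have a self-dual $F$, which is fine. You correctly identify complete reducibility as the place where semisimplicity (or more generally linear reductivity) is essential, and the linear-algebraic nature of $G$ as the source of the faithful finite-dimensional representation; these are indeed the two load-bearing facts.
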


While the Peter-Weyl theorem endows $\O(G)$ with a grading by $\Lambda$, the algebra structure on $\O(G)$ does not respect this grading. Instead, invoking the dominance order on $\Lambda$ from Definition \ref{def:gorder}, we obtain a $\Lambda$-filtered algebra:

\begin{defprop} \label{def:filtration} The subspaces $$ \O(G)_{\leq \lambda} =  \phi \left( \sum_{\mu\leq \lambda} V_\mu^* \otimes V_\mu \right),$$ for $\lambda \in \Lambda$, endow $\O(G)$ with the structure of a $\Lambda$-filtered algebra. That is, if $c_\lambda \in \O(G)_{\leq \lambda}$ and $c_\mu \in \O(G)_{\leq \mu}$, then the product $c_\lambda \cdot c_\mu$ lies in $\O(G)_{\leq \lambda + \mu}$. \end{defprop}

The fact that we obtain a filtered algebra is a consequence of Lemma \ref{lem:tensorproducts} and basic properties of matrix coefficients. We refer to this filtration on $\O(G)$ as the Peter-Weyl filtration, and note that it is not a filtration just by the cone $\Lambda^+$ of dominant weights. Indeed, there are non-dominant weights $\lambda$, such that the set  $\{ \mu \in \Lambda^+ \ | \ \mu \leq \lambda\}$ is nonempty, and hence the subspace $\O(G)_{\leq \lambda}$ is nonzero. This is true, for example, for any positive simple root. 

Let $\C[\Lambda]$ denote the group algebra of $\Lambda$ as an abelian group; it is generated by formal variables $z^\lambda$ with relations $z^\lambda z^\mu = z^{\lambda + \mu}$, for $\lambda, \mu \in \Lambda$.

\begin{definition} The Rees algebra for $\OG$ with the Peter-Weyl filtration is defined as  the following  $\Lambda$-graded subalgebra of $\O(G) \otimes \C[\Lambda]$: $$\ReesG =\bigoplus_{\lambda \in \Lambda} \O(G)_{\leq \lambda} z^\lambda.$$
\end{definition}

\begin{lemma}\label{lem:Rees} The $\Ug$-bimodule structure on $\OG$ extends to a $\Ug$-bimodule structure on $\Rees(G)$. The Hopf algebra structure on $\OG$ induces a bialgebra structure on $\Rees(G)$.  \end{lemma}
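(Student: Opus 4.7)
The plan is to realize both claims as straightforward consequences of the Peter-Weyl decomposition together with the formulas in Lemma~\ref{lem:matrixcoeffbasics}. The key input throughout is that each filtered piece $\O(G)_{\leq\lambda}$ is, by construction, a sum of the $\Ug$-subbimodules $\phi(V_\mu^*\otimes V_\mu)$ of $\O(G)$, and that these subbimodules are preserved both by the multiplication (up to a jump in filtration degree governed by Lemma~\ref{lem:tensorproducts}) and by the coproduct of $\O(G)$.

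First I would handle the bimodule structure. Observe that the action of $\Ug$ by left-invariant vector fields acts on $V_\mu^*\otimes V_\mu$ through the left factor, and the action by right-invariant vector fields acts through the right factor; in either case each $V_\mu^*\otimes V_\mu$ is stable. Consequently the filtered pieces $\O(G)_{\leq\lambda}$ are $\Ug$-subbimodules of $\O(G)$, and the $\Ug$-bimodule action extends in the obvious way to $\Rees(G)\subset\O(G)\otimes\C[\Lambda]$ by the formula $X\cdot(cz^\lambda)\cdot Y=(X\cdot c\cdot Y)z^\lambda$ for $X,Y\in\Ug$, $c\in\O(G)_{\leq\lambda}$, which manifestly lies in $\Rees(G)$.

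For the bialgebra structure I would equip $\O(G)\otimes\C[\Lambda]$ with its tensor product bialgebra structure, where $\C[\Lambda]$ carries the group algebra Hopf algebra structure $\Delta(z^\lambda)=z^\lambda\otimes z^\lambda$, $\epsilon(z^\lambda)=1$, and then verify that $\Rees(G)$ is a sub-bialgebra. Closure under multiplication is exactly the filtration property of Definition-Proposition~\ref{def:filtration}: $(c_\lambda z^\lambda)(c_\mu z^\mu)=c_\lambda c_\mu z^{\lambda+\mu}$ with $c_\lambda c_\mu\in\O(G)_{\leq\lambda+\mu}$. The unit $1\in\O(G)_{\leq 0}\cdot z^0$ lies in $\Rees(G)$, and the counit is just the restriction of $\epsilon_{\O(G)}$, so these cause no trouble. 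The only step that requires a genuine argument is checking that the tensor-product coproduct,
\[
\Delta(cz^\lambda)=\sum c_{(1)}z^\lambda\otimes c_{(2)}z^\lambda,
\]
lands in $\Rees(G)\otimes\Rees(G)$, i.e.\ that $\Delta\bigl(\O(G)_{\leq\lambda}\bigr)\subseteq \O(G)_{\leq\lambda}\otimes\O(G)_{\leq\lambda}$.

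This last containment is the main (and only) technical point, but it is immediate from part~(\ref{lem:matrixcoeffbasics}) of Lemma~\ref{lem:matrixcoeff}: the coproduct formula $\Delta(c^{V_\mu}_{f,v})=\sum_i c^{V_\mu}_{f,v_i}\otimes c^{V_\mu}_{f_i,v}$ shows that the comultiplication sends the summand $\phi(V_\mu^*\otimes V_\mu)$ into $\phi(V_\mu^*\otimes V_\mu)\otimes\phi(V_\mu^*\otimes V_\mu)$, and summing over $\mu\leq\lambda$ gives the required containment. Coassociativity and compatibility with multiplication are then inherited from $\O(G)\otimes\C[\Lambda]$. I would end with a brief remark that $\Rees(G)$ is not a Hopf algebra on the nose because the antipode $S(c_{f,v}^{V_\lambda})=c_{v,f}^{V_\lambda^*}$ does not preserve the $\Lambda$-grading (its image lies in $\O(G)_{\leq -w_0\lambda}z^\lambda$), which explains why only the bialgebra statement is claimed.
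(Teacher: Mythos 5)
Your proof is correct and takes essentially the same approach as the paper: you establish $\Ug\otimes\Ug$-stability of each $\O(G)_{\leq\lambda}$ via the Peter-Weyl decomposition, and you obtain the bialgebra structure by checking that the coproduct of $\O(G)$ restricts to $\O(G)_{\leq\lambda}\to\O(G)_{\leq\lambda}\otimes\O(G)_{\leq\lambda}$ and then tensoring with the group-like element $z^\lambda$. The only additions beyond the paper's proof are your explicit appeal to Lemma~\ref{lem:matrixcoeff}.(\ref{lem:matrixcoeffbasics}) for the coproduct containment and the closing remark on the failure of the antipode, both of which are welcome elaborations rather than departures.
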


\begin{proof} To obtain the $\Ug$-bimodule structure, let $\Ug$ act trivially on $\C[\Lambda]$ and note that, by the Peter-Weyl theorem, each subset $\OG_{\leq \lambda}$ of $\OG$ is stable under the action of $\Ug \otimes \Ug$. The coproduct on $\O(G)$ restricts to a map $\O(G)_{\leq \lambda}$ $\rightarrow$ $\O(G)_{\leq \lambda} \otimes \O(G)_{\leq \lambda}.$ The coproduct and counit on $\Rees(G)$ are given in terms of the coproduct and counit on $\OG$, namely, for $f \in \OG_{\leq \lambda}$, we have $\Delta(fz^\lambda) = \Delta_{\OG}(f)\cdot z^\lambda \otimes z^\lambda$ and $\epsilon(fz^\lambda) =\epsilon_{\OG}(f)$.  \end{proof}

\begin{definition} The Vinberg semigroup $\Vinb_G$ for $G$ is defined as the spectrum of the Rees algebra for $\O(G)$ with the Peter-Weyl filtration: $$\Vinb_G = \Spec\left( \bigoplus_{\lambda \in \Lambda} \O(G)_{\leq \lambda} z^\lambda \right).$$ \end{definition}

Henceforth, we use $\ReesG$ and $\O(\Vinb_G)$ interchangeably. Lemma \ref{lem:Rees} implies that $\Vinb_G$ is a semigroup with an action of $G \times G$. One can show that the group of units of $\Vinb_G$ is the quotient of $G \times T$ by the antidiagonal multiplication action of the center $Z(G)$ of $G$. 

\begin{definition}\label{def:alphas} Let $\C[z^{\alpha_i}] =  \C[z^{\alpha_i}\ | \ i = 1, \dots, r]$ denote the polynomial subalgebra of $\C[\Lambda]$ generated by the elements $z^{\alpha_i}$ for $i \in \Delta$. Let $\A = \Spec \left(\C[z^{\alpha_i}]\right)$, so $\A$ is an $r$-dimensional affine space, and the choice of positive simple roots endows $\A$ with a coordinate system. 
\end{definition}

Observe that $z^{\alpha_i} \in \O(\Vinb_G)$ for any positive root $\alpha_i$, so there is an inclusion $\C[z^{\alpha_i}] \hookrightarrow \O(\Vinb_G)$. The induced surjective map \begin{equation} \pi: \Vinb_G \rightarrow \A \end{equation} is the abelianization map of \cite{Vinberg}. We record the following basic observations:
\begin{itemize}
\item Since $\Lambda = X^*(T)$ is the character lattice of the maximal torus $T$ of $G$, the algebra of functions $\O(T)$ on $T$ is the precisely the group algebra $\C[\Lambda]$.
 \item  The group algebra $\C[\Lambda_R]$ of the root lattice is a subalgebra of $\C[\Lambda]$, and can be identified with the algebra of functions $\O(T/Z)$ on the maximal torus $T/Z$ of the adjoint group $\Gad = G/Z$.
  \item The polynomial algebra $\C[z^\alpha_i \ | \ i = 1, \dots, r]$ of Definition \ref{def:alphas} is a subalgebra of $\C[\Lambda_R] = \O(T/Z)$. Therefore, the affine space $\A$ is a toric variety of $T/Z$. In particular, $\A$ has an action of $T$. 
\item The map $\pi : \Vinb_G \rightarrow \A$ is $T$-equivariant. 
\end{itemize}

\begin{example} The Vinberg semigroup for $\SL_2$ is the semigroup of two by two matrices, and the map $\pi$ is the determinant map, as explained in Section \ref{subsec:sl2} below. \end{example}

\begin{rmk} See \cite[Example 3.2.4]{Brion} for the relation between the definition of the Vinberg semigroup presented in this section and Vinberg's original definition; the latter appears in \cite{Vinberg}. See \cite[Section D.2.3]{DrinfeldGaitsgory} for a Tannakian approach to defining the Vinberg semigroup through its category of representations. \end{rmk}

\subsection{The wonderful compactification}\label{subsec:wonderful}

In this section, we describe two realizations of the wonderful compactification. The first is based on work of De Concini and Springer \cite{DCS}, as well as an exposition due to Evens and Jones \cite{EvensJones}, and proceeds as follows. Fix an irreducible representation $V = V_\lambda$ of $G$ of regular highest weight $\lambda \in \Lambda^+$. Consider the commutative diagram:\[ \xymatrix{ G \ar[r] \ar[d] &  \GL(V) \ar@{^{(}->}[r] & \End(V) \setminus \{0\} \ar[d] \\
\Gad \ar@{-->}[rr]^\psi && \P(\End(V)) },\] 
where the left horizontal arrow is the action map, the right vertical arrow is the quotient by the free action of $\C^\times$, and the remaining solid arrows represent obvious maps. The existence of the map $\psi$ relies on the fact that the center $Z(G)$ acts on $V$ by scalars. A consideration of the weightspace decomposition of $V$ is used to prove the following lemma:

\begin{lemma} The map $\psi$ is injective and equivariant for the action of $G \times G$. \end{lemma}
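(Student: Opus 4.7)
The plan is to handle equivariance directly from the definitions, and to reduce injectivity to the claim that if $h \in G$ satisfies $\rho(h) = c \cdot \id_V$ for some $c \in \C^\times$, then $h \in Z(G)$. Granting this, injectivity of $\psi$ follows since $\psi(g_1) = \psi(g_2)$ forces $\rho(g_1 g_2\inv) \in \C^\times \cdot \id_V$, hence $g_1 g_2\inv \in Z(G)$ and thus $[g_1] = [g_2]$ in $\Gad$.

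Equivariance is a one-line check: the $G \times G$-action on $\Gad$ is $(g_1, g_2) \cdot [g] = [g_1 g g_2\inv]$, while on $\P(\End(V))$ it is induced from $(g_1, g_2) \cdot A = \rho(g_1) A \rho(g_2)\inv$, which preserves the $\C^\times$-scaling. Since $\rho$ is a group homomorphism, $\psi(g_1 g g_2\inv) = [\rho(g_1)\rho(g)\rho(g_2)\inv] = (g_1,g_2) \cdot \psi(g)$.

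For the core claim, I would first use the Jordan decomposition $h = h_s h_u$. Uniqueness of Jordan decomposition in $\GL(V)$, applied to $\rho(h) = \rho(h_s)\rho(h_u) = c \cdot \id_V$, forces $\rho(h_s) = c \cdot \id_V$ and $\rho(h_u) = \id_V$. Regularity of $\lambda$ ensures $\lambda$ restricts nontrivially to every simple factor of $\g$, so no simple factor lies in $\ker \rho$; thus $\ker \rho$ is a finite normal subgroup of the connected group $G$, hence central. In particular $h_u \in \ker \rho$ is a unipotent element of a finite group, so $h_u = e$ and $h = h_s$.

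The main remaining step, and the one that directly uses the weight decomposition of $V_\lambda$, is to show that a semisimple $h_s$ with $\rho(h_s) = c \cdot \id_V$ is central. Since $Z(G)$ is normal and the hypothesis on $h_s$ is conjugation-invariant, I may conjugate and assume $h_s \in T$. Then $\rho(h_s)$ acts on the weight space $V^\mu$ by $\mu(h_s)$, and the scalar condition forces $\mu(h_s) = \nu(h_s)$ for all weights $\mu, \nu$ of $V_\lambda$. The key input of regularity is that $\langle \lambda, \alpha_i^\vee \rangle \geq 1$ for every simple root, so each $\lambda - \alpha_i$ occurs as a weight of $V_\lambda$, whence $\alpha_i(h_s) = 1$ for all simple roots. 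The simple roots generate the root lattice, so $\alpha(h_s) = 1$ for every root, giving $h_s \in Z(G)$.
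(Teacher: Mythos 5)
Your proof is correct, and it follows the same route the paper alludes to (the paper omits the proof, noting only that the weight-space decomposition of $V$ is the key input and deferring to \cite{DCS, EvensJones}): the heart of the matter is that regularity of $\lambda$ forces $\lambda - \alpha_i$ to be a weight for every simple root, so a torus element acting by a scalar must kill every root and hence be central. Your Jordan-decomposition preamble handling the unipotent part is a careful supplement that standard treatments sometimes leave implicit, but it does not change the essential argument.
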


\begin{definition} The wonderful compactification of $\Gad$ is defined as the closure $\overline{\psi(\Gad)}$ of the image of $\psi$ in $\P(\End(V))$. \end{definition}

\begin{prop}\cite[Propositions 2.14, 3.1]{EvensJones}\label{prop:smoothprojindependent} The wonderful compactification $\Gbarad$ is a smooth projective $G \times G$ variety. Up to isomorphism, it does not depend on the choice of regular dominant weight.  \end{prop}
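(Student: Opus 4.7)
The plan is to treat the three assertions on a single $\Gbarad$ in turn, and then handle independence of $\lambda$ separately. Projectivity and the $G\times G$-action are essentially formal: $\Gbarad$ is by definition a closed subvariety of the projective variety $\P(\End(V))$, and the $G\times G$-equivariance of $\psi$, combined with the continuity of the $G\times G$-action on $\P(\End(V))$, ensures that the closure of $\psi(\Gad)$ is $G\times G$-stable.

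For smoothness, I will use the classical ``big cell'' construction. Fix a highest weight vector $v_+ \in V=V_\lambda$ and the corresponding dual element $v_+^* \in V^*$; let $E_0 = v_+ \otimes v_+^* \in \End(V)$ be the projection onto the highest weight line. A cocharacter $\tau \colon \G_m \to T$ pairing positively with every simple root satisfies $\lim_{t\to 0}\psi(\tau(t)) = [E_0]$, placing $[E_0]$ in $\Gbarad$. Because $\lambda$ is regular, the stabilizer of the highest weight line of $V_\lambda$ is exactly $B$ and the stabilizer of the lowest weight vector of $V_\lambda^*$ is exactly $B^-$, so $\Stab_{G\times G}([E_0]) = B\times B^-$ and the orbit of $[E_0]$ is identified with the closed orbit $G/B \times B^-\backslash G$. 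Next, let $\U \subset \P(\End(V))$ be the affine chart where the linear form $E \mapsto v_+^*(Ev_+)$ is nonvanishing. Parametrizing elements $g$ of the open Bruhat cell of $G$ via the Gauss decomposition $g=u^- t u$ and rescaling representatives in $\P(\End(V))$ by the leading matrix coefficient $\lambda(t)$, one checks that the resulting class depends on $t$ only through the simple-root characters $\alpha_i(t) \in \G_m$. Extending to allow these to degenerate to zero yields an identification
\[
\U \cap \Gbarad \;\cong\; U^- \times \A^r \times U,
\]
an affine space of dimension $\dim G$, giving smoothness along the big cell; smoothness is then global because $\Gbarad$ is covered by $G\times G$-translates of $\U \cap \Gbarad$ (Weyl group translates of the base point already suffice). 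The principal obstacle is exactly this identification: while the Gauss decomposition parametrizes the image of $\Gad$ inside $\U$, one must verify that the closure of this image contributes \emph{precisely} the $\A^r$-degeneration of the $T$-coordinate and no additional points. Regularity of $\lambda$ is critical here, ensuring that the $T$-coordinates appearing in the normalized matrix entries are exactly the simple root characters rather than a quotient of them.

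For independence of $\lambda$, let $\lambda, \mu$ be two regular dominant weights and consider the combined embedding $\psi_\lambda \times \psi_\mu \colon \Gad \hookrightarrow \P(\End V_\lambda) \times \P(\End V_\mu)$; let $Z$ be the closure of its image. The projections $p_\lambda\colon Z \to \overline{\psi_\lambda(\Gad)}$ and $p_\mu \colon Z \to \overline{\psi_\mu(\Gad)}$ are $G\times G$-equivariant, proper, and birational, and the targets are smooth, hence normal, by the previous step. Zariski's main theorem reduces the claim that $p_\lambda$ and $p_\mu$ are isomorphisms to verifying that their fibers are singletons. Equivariance forces the fiber dimension to be constant on each $G\times G$-orbit of the target; the fibers over the dense open $\Gad$ are singletons by birationality; and an orbit-by-orbit analysis, beginning with the closed orbit (where source and target are homogeneous with the same stabilizer $B \times B^-$, so the equivariant surjection is automatically an isomorphism) and working outward by orbit-closure order, extends this conclusion to all fibers, yielding the desired canonical isomorphisms $\overline{\psi_\lambda(\Gad)} \cong Z \cong \overline{\psi_\mu(\Gad)}$.
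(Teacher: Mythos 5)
The paper does not prove this proposition; it simply cites \cite{EvensJones}, and the argument there is the classical de~Concini--Procesi one. Your proposal follows that same route (big-cell chart for smoothness, Zariski's main theorem for independence), so the approach is the right one. Two points deserve attention.

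First, the big-cell identification is where essentially all the work lives, and your write-up defers it. To make $\U\cap\Gadbar\cong U^-\times\A^r\times U$ rigorous one must show (i) that the normalized entries of $u^-tu$ in a weight basis extend to a morphism from $U^-\times\A^r\times U$ into $\U$, which uses that every weight $\mu$ of $V_\lambda$ has $\lambda-\mu=\sum n_i\alpha_i$ with $n_i\geq 0$, so after dividing by $\lambda(t)$ the $t$-dependence is polynomial in the coordinates $\alpha_i(t)^{-1}$; (ii) that this morphism is a \emph{closed immersion}, for which regularity of $\lambda$ is used to recover each $\alpha_i(t)^{-1}$ on the nose from the entry on the weight space $V_{\lambda-\alpha_i}$; and (iii) that the image is all of $\U\cap\Gadbar$ and not a proper subvariety. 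You correctly flag this as ``the principal obstacle,'' but the proposal states the gap rather than closing it. There is also a small sign error nearby: with $\tau$ pairing positively with the simple roots, the normalized $\psi(\tau(t))$ acts on $V_\mu$ by $t^{\langle\mu-\lambda,\tau\rangle}$ with exponent $\leq 0$, so the limit to $[E_0]$ is taken as $t\to\infty$, not $t\to 0$ (or replace $\tau$ by $\tau^{-1}$).

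Second, the ZMT step is correct in outline, but ``an orbit-by-orbit analysis, working outward by orbit-closure order'' is vaguer than needed and hides a key observation. The clean version: properness of $p_\lambda$ makes $S=\{y:\dim p_\lambda^{-1}(y)>0\}$ closed and $G\times G$-stable; if nonempty it contains a closed orbit, hence the unique closed orbit $G/B\times B^-\backslash G$; but the fiber over its base point is a single point, because the stabilizer in $G\times G$ of the closed orbit of $Z$ is a parabolic subgroup (the closed orbit is complete) contained in $B\times B^-$, and $B\times B^-$ is a \emph{Borel} of $G\times G$, forcing equality. So $S=\emptyset$, all fibers are finite, and ZMT applied to a finite birational morphism onto a smooth (hence normal) target gives the isomorphism. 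Your phrase ``automatically an isomorphism'' glosses over exactly the parabolic-inside-Borel step.
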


\begin{example} The wonderful compactification of $\PSL_2$ is $\P^3$ (see Section \ref{subsec:sl2} below for more details). However, for $n \geq 3$, the wonderful compactification of $\PSL_n$ is not $\P^{n^2-1}$.  \end{example}

To explain a second realization of the wonderful compactification of $\Gad$, we first observe that, since $\Lambda = X^*(T)$ is the character lattice of the maximal torus $T$ of $G$, the $\Lambda$-grading on the Rees algebra $\O(\Vinb_G)$ endows $\Vinb_G$ with a $T$-action which commutes with the $G \times G$-action. We have the following result:

\begin{theorem}{\cite[Theorem 5.3]{MartensThaddeus}} \label{thm:MT} Fix a regular dominant weight $\lambda$ of $G$, thought of as a character of $T$. The GIT quotient of $\Vinb_G$ by $T$ along $\lambda$ recovers the wonderful compactification: $$\Gbarad = \Vinb_G \text{ $\!$/$\! \!$/$\!$}_\lambda  T.$$ \end{theorem}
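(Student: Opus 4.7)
The plan is to unpack both sides in terms of their defining graded algebras and then invoke Theorem \ref{thm:Brion} to bridge them. On the right-hand side, since the $T$-action on $\Vinb_G = \Spec(\Rees(G))$ is dual to the $\Lambda = X^*(T)$ grading on $\Rees(G) = \bigoplus_\mu \O(G)_{\leq \mu}$, the GIT quotient along the character $\lambda$ is, by construction,
\[
\Vinb_G \GIT_\lambda T = \Proj\left( \bigoplus_{n\geq 0} \O(G)_{\leq n\lambda}\right).
\]
The task is therefore to identify this $\Proj$ with $\Gbarad$.

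Theorem \ref{thm:Brion} provides the decisive input: $\Rees(G)$ is isomorphic, as a $\Lambda$-graded algebra, to the total coordinate (Cox) ring of $\Gbarad$, and this identification exhibits $\Lambda$ as the Picard group of $\Gbarad$. Writing $L_\mu$ for the line bundle on $\Gbarad$ corresponding to $\mu \in \Lambda$ under this isomorphism, we obtain $\O(G)_{\leq \mu} \cong H^0(\Gbarad, L_\mu)$, and hence an isomorphism of $\Z_{\geq 0}$-graded rings
\[
\bigoplus_{n\geq 0} \O(G)_{\leq n\lambda} \cong \bigoplus_{n\geq 0} H^0\bigl(\Gbarad,\, L_\lambda^{\otimes n}\bigr).
\]

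It then remains to show that $L_\lambda$ is ample whenever $\lambda$ is regular dominant, because in that case the standard fact that a projective variety is recovered as the $\Proj$ of the graded section ring of an ample line bundle (up to Veronese) concludes the argument. To verify ampleness, I would use the projective embedding $\psi: \Gbarad \hookrightarrow \P(\End(V_\lambda))$ constructed in Section \ref{subsec:wonderful}: since $\psi$ is a closed embedding of projective varieties, the pullback $\psi^*\O_{\P(\End(V_\lambda))}(1)$ is very ample on $\Gbarad$. One then matches this pullback with a positive power of $L_\lambda$ by comparing the $G \times G$-equivariant structure on global sections: the image of $V_\lambda^* \otimes V_\lambda \hookrightarrow \O(G)_{\leq \lambda}$ realizes $\psi^*\O(1)$ as the line bundle whose sections contain precisely the top Peter--Weyl component, which aligns with $L_\lambda$ via Theorem \ref{thm:Brion}.

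The hardest step in this plan is the explicit matching of $\psi^*\O(1)$ with (a power of) $L_\lambda$: this requires carefully tracking both the Picard class and the $T$-equivariance, tying together the intrinsic Cox-ring description of $L_\lambda$ from Theorem \ref{thm:Brion} with the extrinsic description coming from the projective embedding via the representation $V_\lambda$. Once that correspondence is pinned down, ampleness of $L_\lambda$ follows, the $\Proj$ of the $\lambda$-diagonal subring equals $\Gbarad$, and the theorem is established. Independence from the choice of regular dominant $\lambda$ on the left-hand side is compatible with Proposition \ref{prop:smoothprojindependent} on the right.
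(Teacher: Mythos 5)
The paper does not actually prove Theorem~\ref{thm:MT}; it is stated as a citation of \cite[Theorem 5.3]{MartensThaddeus}, so there is no internal proof for your argument to be compared against. What you have done instead is derive the Martens--Thaddeus GIT statement from the other cited input, Theorem~\ref{thm:Brion} (Brion's Cox-ring identification), which is a legitimate and instructive alternative route: your chain --- unpack $\Vinb_G \GIT_\lambda T$ as $\Proj$ of the $\lambda$-diagonal Veronese subring, identify that subring with $\bigoplus_n H^0(\Gadbar, L_\lambda^{\otimes n})$ via the Cox-ring isomorphism (using $L_\mu \otimes L_\nu \cong L_{\mu+\nu}$ from the group structure on $\Pic$), and conclude once $L_\lambda$ is ample --- is exactly the standard ``universal torsor recovery'' argument, and it is sound given Theorem~\ref{thm:Brion}. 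You correctly isolate the one substantive step you leave as a gesture, namely matching $L_\lambda$ with $\psi^*\O_{\P(\End V_\lambda)}(1)$; this can be nailed down by restricting both bundles to the dense orbit $\Gad$, where each trivializes and where sections of $\psi^*\O(1)$ become precisely the matrix coefficients in $V_\lambda^* \otimes V_\lambda \subseteq \O(G)_{\leq \lambda} = H^0(\Gadbar, L_\lambda)$, and then invoking that a line bundle on $\Gadbar$ is determined by its restriction to $\Gad$ together with its multiplicities along the boundary divisors. The trade-off between the two routes: the paper's (cite Martens--Thaddeus) is shorter and pulls in the moduli-stack perspective, while yours makes the role of the Cox ring and the ampleness of regular dominant $\lambda$ explicit, which is conceptually closer to how the quantum analogue is actually used later in the paper (Corollary~\ref{cor:qcohgadbar} via Proposition~\ref{prop:qcohproj}).

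One caution worth spelling out if you write this up in full: Theorem~\ref{thm:Brion} as stated identifies $\Pic(\Gadbar)$ with $\Lambda = \Lambda_G$, and the grading group of $\Rees(G)$ is also $\Lambda_G$, which depends on the choice of $G$ covering $\Gad$; meanwhile $\Pic(\Gadbar)$ is intrinsic to $\Gad$. Your argument implicitly uses the compatibility of these identifications across different $G$'s (or, more simply, works for $G$ simply connected, where $\Lambda_G = \Lambda_W$), and that compatibility should be checked rather than assumed.
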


Proposition \ref{prop:qcohproj} implies the following corollary of the preceding theorem:

\begin{cor}\label{cor:qcohgadbar} There is an equivalence of categories: $$\QCoh\left(\Gbarad\right) = \uProj\left( \O(\Vinb_G) \right).$$ \end{cor}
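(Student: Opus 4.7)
The plan is to deduce this corollary by invoking Proposition \ref{prop:qcohproj} with $R = \O(\Vinb_G)$ and verifying its hypotheses using Theorem \ref{thm:MT} together with Proposition \ref{prop:smoothprojindependent}. The setup is in place: $\O(\Vinb_G)$ is a commutative algebra graded by $\Lambda = X^*(T)$, so $\Spec(\O(\Vinb_G)) = \Vinb_G$ carries the corresponding $T$-action, and the cone $\Lambda^+$ of dominant weights has nonempty interior (the regular dominant weights), which is what Proposition \ref{prop:qcohproj} requires.

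First, I would check the compatibility hypothesis in Proposition \ref{prop:qcohproj}: that the GIT quotients $\Vinb_G \GIT_\lambda T$ and $\Vinb_G \GIT_\mu T$ agree for any two regular dominant weights $\lambda, \mu$. This is immediate from combining Theorem \ref{thm:MT}, which identifies each such GIT quotient with the wonderful compactification $\Gbarad$, and Proposition \ref{prop:smoothprojindependent}, which states that this identification is canonical and independent of the choice of regular dominant weight. Together, these give $\Vinb_G \GIT_\lambda T \cong \Gbarad \cong \Vinb_G \GIT_\mu T$ for any pair in the interior of $\Lambda^+$.

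With the hypotheses verified, Proposition \ref{prop:qcohproj} applies directly and yields, for any regular dominant $\lambda$, an equivalence
\[ \QCoh\left( \Vinb_G \GIT_\lambda T \right) \simeq \uProj\left( \O(\Vinb_G) \right).\]
Substituting $\Vinb_G \GIT_\lambda T = \Gbarad$ from Theorem \ref{thm:MT} gives the claimed equivalence $\QCoh(\Gbarad) \simeq \uProj(\O(\Vinb_G))$. The proof is essentially a bookkeeping exercise stringing together previously established results; there is no real obstacle, since the substantive content already lives in Proposition \ref{prop:qcohproj} (which in turn rests on Serre's theorem and the Artin--Zhang ample shift machinery) and in Theorem \ref{thm:MT}. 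The only thing worth remarking on is that Proposition \ref{prop:qcohproj} produces the equivalence through a choice of $\lambda$, but the statement of the corollary is independent of this choice precisely because Proposition \ref{prop:smoothprojindependent} ensures the geometric side is too.
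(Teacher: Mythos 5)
Your proposal is correct and follows the same route the paper takes: cite Proposition \ref{prop:qcohproj} with $R = \O(\Vinb_G)$, using Theorem \ref{thm:MT} to check that the GIT quotients along different regular dominant characters all coincide with $\Gbarad$ (with Proposition \ref{prop:smoothprojindependent} guaranteeing that $\Gbarad$ is itself well-defined independently of the choice). The paper leaves this verification implicit in the phrase ``corollary of the preceding theorem,'' whereas you spell it out, but the argument is the same.
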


\begin{rmk} The total coordinate ring, or Cox ring, of $\Gbarad$ is precisely $\O(\VinbG)$ (see \cite{Brion}). \end{rmk}

\subsection{Orbits in the wonderful compactification} \label{subsec:orbits}

In this section, we describe the $G \times G$ orbits on the wonderful compactification. Fix a subset $I \subseteq \Delta$ and consider the corresponding parabolic subgroup $P_I$, its opposite $P_I^-$, and its Levi $L_I$. There are projection maps $\text{\rm pr}^L : P_I \rightarrow L_I$ and $\text{\rm pr}^{L^-} : P_I^- \rightarrow L_I$, and each of these compose to a map valued in $L_I^{\text{ad}} = L_I/ Z(L_I)$. 

\begin{prop}\cite{EvensJones}\label{prop:ejorbits} We have:
\begin{enumerate}
\item The $G \times G$ orbits on $\Gbarad$ are in bijection with subsets of $\Delta$. Write $\Orb_I$ for the orbit corresponding to $I \subseteq \Delta$. For subsets $I$ and $J$ of $\Delta$, the containment $\Orb_I \subseteq \overline{\Orb_J}$ holds if and only if $I \subseteq J$. 

\item There is a point in $\Orb_I$ whose stabilizer is the subgroup
$$H_I = P_I \times_{L_I^{\text{ad}}} P_I^- = \{ (p,p^-) \in P_I \times P_I^- \ | \ \text{\rm pr}^L(p) \text{\rm pr}^{L^-}(p^-)\inv \in Z(L_I)\}.$$

\item Let $\overline{\Lad}$ denote the wonderful compactification of the adjoint group $\Lad$ of $L_I$, and let  $\overline{\Orb_I}$ denote the closure of $\Orb_I$ in $\Gbarad$. There are fibrations:

\[ \xymatrix{ \Lad \ar[r]  & \Orb_I \ar[d] \\ & G/P_I \times P_I^- \backslash G  }  \qquad \xymatrix{ \overline{\Lad} \ar[r]  & \overline{\Orb_I} \ar[d] \\ & G/P_I\times P_I^- \backslash G  }. \]
\end{enumerate} \end{prop}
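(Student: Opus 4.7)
The plan is to work inside the embedding $\psi: \Gad \hookrightarrow \mathbb{P}(\End(V))$ for a regular dominant $\lambda$, where $V = V_\lambda$. First I would set up, for each $I \subseteq \Delta$, a canonical base point $x_I \in \overline{\Gad}$ and then bootstrap the entire proposition from the computation of its stabilizer.

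The construction of $x_I$ proceeds as follows. Let $\gamma_I : \G_m \to T$ be a one-parameter subgroup of $T$ with the property that $\langle \alpha_i, \gamma_I \rangle = 0$ for $i \in I$ and $\langle \alpha_i, \gamma_I \rangle > 0$ for $i \notin I$. Acting on $V$, this yields a grading of $V$ by integers, and the zero-weight piece is precisely the $L_I$-submodule of $V$ generated by the highest weight vector. Consider the element $\gamma_I(t) \in \GL(V)$ rescaled so that it acts as the identity on the highest weight line, and take the limit $e_I := \lim_{t \to 0} t^{-\lambda(\gamma_I(t))} \gamma_I(t)$ in $\End(V)$. The result is a nonzero idempotent projecting onto the $L_I$-isotypic block of $V$ containing $v_\lambda$, and its class $x_I \in \mathbb{P}(\End(V))$ lies in the closure of $\psi(\Gad)$ by construction.

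The core computation is then the stabilizer of $x_I$ under the $G \times G$ action by $(g_1, g_2) \cdot A = g_1 A g_2^{-1}$. A pair $(g_1, g_2)$ stabilizes $x_I$ precisely when $g_1 e_I g_2^{-1} = c \, e_I$ for some scalar $c$. Writing $V^I$ for the image of $e_I$ and $V^I_\perp$ for the sum of the remaining weight spaces, this forces $g_1$ to preserve $V^I$ (and act on it up to scalar by a Levi element), which identifies $g_1 \in P_I$, and dually $g_2 \in P_I^-$; the scalar condition translates into the compatibility $\text{pr}^L(g_1) \text{pr}^{L^-}(g_2)^{-1} \in Z(L_I)$. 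This is exactly $H_I$, giving (2), and hence an isomorphism $\Orb_I \cong (G \times G)/H_I$. The canonical projection $H_I \to P_I \times P_I^-$ is an inclusion with quotient $G/P_I \times P_I^- \backslash G$, and the fiber of $\Orb_I \to G/P_I \times P_I^- \backslash G$ at the basepoint is $(P_I \times P_I^-)/H_I \cong L_I^{\text{ad}}$, giving the first fibration in (3).

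For (1), the bijection follows from the observation that any $G \times G$ orbit meets the closure of the torus slice $\overline{T \cdot \text{id}} \subseteq \overline{\Gad}$, and a toric analysis identifies the orbits of this torus with the faces of the Weyl chamber, hence with subsets of $\Delta$; concretely, every $x_I$ is such a torus-limit. The closure order $\Orb_I \subseteq \overline{\Orb_J}$ iff $I \subseteq J$ can be seen from the limit construction: for $I \subseteq J$, composing one-parameter subgroups shows $x_I \in \overline{(G \times G) \cdot x_J}$, while the reverse implication follows from a dimension count using $\dim \Orb_I = \dim G + \dim L_I^{\text{ad}}$. Finally, the second fibration $\overline{\Orb_I} \to G/P_I \times P_I^- \backslash G$ follows because $\overline{\Orb_I}$ is $(G \times G)$-equivariantly induced from the $L_I \times L_I$-variety $\overline{L_I^{\text{ad}}}$, which is itself the wonderful compactification of the Levi's adjoint group, obtained by intersecting $\overline{\Gad}$ with an appropriate transverse slice.

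The main obstacle I anticipate is the stabilizer computation: identifying the scalar in $g_1 e_I g_2^{-1} = c \, e_I$ with a central element of $L_I$ requires a careful bookkeeping of the $L_I$-module structure on $V^I$ and the weight decomposition, and this is the place where the specific $H_I$ appears rather than some naive product $P_I \times P_I^-$. Once this is in hand, the fibration statements and the orbit closure order are essentially formal.
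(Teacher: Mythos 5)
The paper states this proposition as a citation to Evens and Jones and supplies no proof of its own, so there is no internal argument to compare against; your proposal is essentially a reconstruction of the Evens--Jones proof (idempotent limit points, stabilizer computation, induced fibrations), and that is the right route. Two details, however, need repair. First, the limit direction: with your normalization $\langle\alpha_i,\gamma_I\rangle>0$ for $i\notin I$ and the rescaling by $t^{-\langle\lambda,\gamma_I\rangle}$, the operator acts on the $\mu$-weight space by $t^{\langle\mu-\lambda,\gamma_I\rangle}$, and since $\mu-\lambda$ is a nonpositive combination of simple roots these exponents are all $\le 0$. Thus $t\to 0$ blows up; it is $t\to\infty$ that yields your claimed idempotent onto $V^I=(V_\lambda)_I$. (Equivalently, flip the sign of $\gamma_I$.) As you take $t\to 0$ with a dominant $\gamma_I$, you would instead land on the projection onto the \emph{lowest} $L_I$-block --- a point in the same orbit, but not the one you describe.

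Second, the step ``this identifies $g_1\in P_I$'' is quicker than it should be. From $g_1 e_I g_2^{-1}=c\,e_I$ one concludes that $g_1$ preserves $\operatorname{im}(e_I)=V^I$ and $g_2$ preserves $\ker(e_I)$, but you then need the genuine claims $\operatorname{Stab}_G(V^I)=P_I$ and $\operatorname{Stab}_G(\ker e_I)=P_I^-$. Both stabilizers are parabolic subgroups containing $T$, and both a priori could be $P_J$ for some $J\supseteq I$; the fact that they are not uses the \emph{regularity} of $\lambda$, which guarantees that each weight $\lambda-\alpha_j$ for $j\notin I$ actually occurs in $V$, so that the root vector $F_j$ takes $v_\lambda$ out of $V^I$ and $E_j$ takes part of $\ker e_I$ into $V^I$. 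This is exactly where the hypothesis of Proposition \ref{prop:smoothprojindependent} enters, and it should be stated. Once those two points are in place, the Schur-lemma identification of the scalar $c$ with a central element of $L_I$, the resulting description of $H_I$, the bijection of orbits with subsets of $\Delta$, the closure order, and the two fibrations all follow exactly as you outline.
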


\begin{example} In the extreme cases,  $\Orb_\Delta = \Gad$ is the unique open orbit, and $\Orb_\emptyset$ $=$ $G/B \times B^-\backslash G$ is the unique closed orbit. When $G = \SL_2$, there are only two $G \times G$ orbits on $\Gbarad = \P^3$, and they are the extreme ones. In the $\SL_3$ case, there are four orbits. The two non-extreme orbits each form $\PSL_2$-bundles over $\P^2 \times \P^2$, and the closure of each forms a $\P^3$-bundle over  $\P^2 \times \P^2$. \end{example}

Let $e_I$ be the point in $\A$ whose $i$th coordinate is zero if $i \notin I$ and 1 otherwise. 
Recall the $T$-equivariant  map $\pi : \Vinb_G \rightarrow \mathbb{A}$ defined in Section \ref{subsec:vinberg}. The preimage $\pi\inv(T \cdot e_I)$ of the $T$-orbit of $e_I$ is an affine subvariety of $\Vinb_G$ with an action of $T$.

\begin{prop}\label{prop:orbitsGIT} Fix a regular dominant weight $\lambda \in \Lambda^+$. For each subset $I \subseteq \Delta$, the orbit $\Orb_I$ is the GIT quotient of $\pi\inv(T\cdot e_I)$ by $T$ at $\lambda$: 
$$\Orb_I =   \pi\inv(T\cdot e_I) \text{ $\!$/$\! \!$/$\!$}_{\lambda} T.$$ 
\end{prop}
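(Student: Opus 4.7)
The plan is to transfer the $T$-orbit stratification of $\A$ to $\Gbarad$ via the $T$-equivariant map $\pi$ and the GIT realization $\Gbarad = \Vinb_G \GIT_\lambda T$ of Theorem \ref{thm:MT}.

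First, I would record that $\pi$ is simultaneously $T$-equivariant and $G \times G$-equivariant, with $G \times G$ acting trivially on $\A$. The $T$-action on $\A$ factors through the quotient $T \to T/Z$, identified with the dense torus of $\A$; consequently the $T$-orbits in $\A$ are precisely $\{T \cdot e_I : I \subseteq \Delta\}$, namely the points whose coordinates are nonzero exactly at indices in $I$. These orbits partition $\A$, with closure relations $\overline{T \cdot e_I} = \bigsqcup_{J \subseteq I} T \cdot e_J$. Taking preimages under $\pi$ produces a $(G \times G) \times T$-stable stratification of $\Vinb_G$, and passing to GIT quotients by $T$ at $\lambda$ yields a decomposition $\Gbarad = \bigsqcup_{I \subseteq \Delta} X_I$ with $X_I := \pi^{-1}(T\cdot e_I) \GIT_\lambda T$. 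Each $X_I$ is a $G \times G$-stable locally closed subvariety of $\Gbarad$, and the closure relations $\overline{X_I} = \bigsqcup_{J \subseteq I} X_J$ match the orbit closure relations of Proposition \ref{prop:ejorbits}.

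Next, I would identify $X_I$ with $\Orb_I$. The case $I = \Delta$ is immediate, since $T \cdot e_\Delta$ is the open dense torus of $\A$, making $X_\Delta$ the open dense stratum, which must coincide with the unique open orbit $\Orb_\Delta = \Gad$. For general $I$ I would select a distinguished point in $\pi^{-1}(e_I)$ arising from the semigroup structure on $\Vinb_G$, and compute its $G \times G$-stabilizer, matching it against $H_I = P_I \times_{\Lad} P_I^-$ from Proposition \ref{prop:ejorbits}. An alternative route is a downward induction on $|I|$: both stratifications partition $\Gbarad$ into $2^r$ pieces indexed by the same poset with matching closure relations, so once the $G\times G$-orbit dimension at each stratum is verified via a dimension count on the fiber of $\pi$, the two partitions must agree piece by piece.

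The hardest part, and where I expect the main obstacle, is the concrete identification $X_I = \Orb_I$ for non-extreme $I$. Under the Rees description $\O(\Vinb_G) = \bigoplus_\lambda \O(G)_{\leq \lambda} z^\lambda$, the fiber $\pi^{-1}(e_I)$ corresponds to the quotient algebra obtained by setting $z^{\alpha_i} = 1$ for $i \in I$ and $z^{\alpha_i} = 0$ for $i \notin I$; the resulting algebra is closely related to the $\Lambda/\Lambda_I$-associated graded $\gr_I(\OG)$ that reappears in Definition \ref{def:qorbits}, and one must argue that its GIT quotient at a regular weight $\lambda$ is precisely the $\Lad$-bundle over $G/P_I \times P_I^- \backslash G$ described in Proposition \ref{prop:ejorbits}. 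This requires either a hands-on parabolic-reduction calculation or an appeal to the Martens-Thaddeus framework, in which the GIT stratification of $\Vinb_G \GIT_\lambda T$ already reproduces the Evens-Jones orbit decomposition.
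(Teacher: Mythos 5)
Your plan follows the same route as the paper: decompose $\A$ into $T$-orbits $T \cdot e_I$, pull back under the $T$- and $G \times G$-equivariant map $\pi$, and pass to GIT quotients at $\lambda$ to write $\Gadbar$ as a disjoint union of $G \times G$-stable pieces $\pi^{-1}(T \cdot e_I) \GIT_\lambda T$ indexed by subsets of $\Delta$. The paper then concludes with a bare cardinality appeal to Proposition~\ref{prop:ejorbits} — there are exactly $2^{|\Delta|}$ orbits, so the $2^{|\Delta|}$ disjoint nonempty $G \times G$-stable pieces must each be a single orbit — so the stabilizer computations and dimension counts you flag as the hardest step are more than is needed, and your observation that the closure relations match already pins down the indexing.
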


\begin{proof} The space $\mathbb{A}$ is the union of the $T$-orbits of $e_I$, as $I$ ranges over the subsets of $\Delta$, and these orbits are pairwise disjoint. Therefore, the Vinberg semigroup $\Vinb_G$ is the union of the subvarieties $\pi\inv(T\cdot e_I)$. Since the action of $T$ on $\Vinb_G$ commutes with that of $G \times G$, each of these subvarieties carries an action of $G\times G$. Using Theorem \ref{thm:MT}, we see that the wonderful compactification $\Gadbar$ is the union of the disjoint $G\times G$-equivariant subvarieties $\pi\inv(T\cdot e_I) \text{ $\!$/$\! \!$/$\!$}_{\lambda} T$,  as $I$ ranges over subsets of $\Delta$. The result follows from the fact that the $G \times G$ orbits on $\Gbarad$ are in bijection with subsets of $\Delta$ (Proposition \ref{prop:ejorbits}). \end{proof}

The algebra of functions $\O(\pi\inv(T \cdot e_I))$ on $\pi\inv(T \cdot e_I)$ is a $\Lambda$-graded algebra. Proposition \ref{prop:qcohproj} implies the following corollary of the preceding proposition:

\begin{cor} The category of quasicoherent sheaves on $\Orb_I$ is equivalent to the $\uProj$ category for the $\Lambda$-graded algebra $\O(\pi\inv(T \cdot e_I))$, that is: $$\QCoh(\Orb_I) = \uProj\left(  \O(\pi\inv(T\cdot e_I)) \right).$$  \end{cor}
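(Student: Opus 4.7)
The plan is to deduce this corollary directly from Proposition \ref{prop:orbitsGIT} together with the general equivalence of Proposition \ref{prop:qcohproj}. Since $\pi\inv(T\cdot e_I)$ is an affine subvariety of $\Vinb_G$ (defined by a $T$-stable ideal, namely the vanishing ideal of the complement of $T \cdot e_I$ in $\A$, pulled back through $\pi$), its coordinate ring $R := \O(\pi\inv(T\cdot e_I))$ inherits a $\Lambda$-grading from the grading on $\O(\Vinb_G)$. By Proposition \ref{prop:orbitsGIT}, we have $\Orb_I = \Spec(R) \GIT_\lambda T$ for any regular dominant weight $\lambda$, so Proposition \ref{prop:qcohproj} delivers the claimed equivalence once we verify its hypothesis.

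The hypothesis of Proposition \ref{prop:qcohproj} requires that the GIT quotients $\Spec(R) \GIT_\lambda T$ and $\Spec(R) \GIT_\mu T$ coincide as $\lambda$ and $\mu$ range over the interior of $\Lambda^+$. I expect this to be the main (and only real) technical point to check. It follows from two observations: first, by Proposition \ref{prop:orbitsGIT} applied to the full Vinberg semigroup, the GIT quotient $\Vinb_G \GIT_\lambda T = \Gbarad$ is independent of the choice of regular dominant weight $\lambda$, which is consistent with Proposition \ref{prop:smoothprojindependent}; second, the decomposition of $\Gbarad$ into $G \times G$ orbits is intrinsic and does not depend on $\lambda$, so the identification of $\Orb_I$ with a GIT quotient of the $T$-stable subvariety $\pi\inv(T \cdot e_I)$ is also independent of $\lambda$. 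Concretely, both GIT quotients identify with the same orbit $\Orb_I$ inside the $\lambda$-independent wonderful compactification.

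With the hypothesis in place, the proof is simply an application of Proposition \ref{prop:qcohproj} to the commutative $\Lambda$-graded algebra $R = \O(\pi\inv(T \cdot e_I))$, yielding
\[
\QCoh(\Orb_I) = \QCoh(\Spec(R) \GIT_\lambda T) \simeq \uProj(R) = \uProj\bigl( \O(\pi\inv(T \cdot e_I)) \bigr),
\]
as desired. The hard part, as noted above, is the $\lambda$-independence of the GIT quotient, which is where all the geometric input from the theory of the wonderful compactification is concentrated; everything else is a formal consequence of the machinery set up in Section \ref{subsec:projbasic}.
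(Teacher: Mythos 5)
Your proposal is correct and follows exactly the route the paper intends: the Corollary is stated as an immediate consequence of Proposition \ref{prop:orbitsGIT} via the general machinery of Proposition \ref{prop:qcohproj}, and your verification of the $\lambda$-independence hypothesis (which holds because Proposition \ref{prop:orbitsGIT} identifies $\pi\inv(T\cdot e_I) \GIT_\lambda T$ with the same intrinsic orbit $\Orb_I$ for every regular dominant $\lambda$) is precisely the implicit check the paper omits.
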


\subsection{Poisson structures}\label{subsec:poissonvinb}

In this section, we describe how the standard Poisson-Lie structure on the group $G$ leads to a Poisson structure on the Vinberg semigroup $\VinbG$, on the wonderful compactification $\Gadbar$, and on each orbit $G \times G$ orbit $\Orb_I$ in $\Gadbar$. We adopt the notation of Section \ref{subsec:notation}. 

\begin{lemma}\label{lem:poissonfiltration} The standard Poisson-Lie bracket on $\O(G)$ preserves the Peter-Weyl filtration on $\O(G)$; that is, it restricts to a map $$\{, \} : \O(G)_{\leq \lambda} \otimes \O(G)_{\leq \mu} \rightarrow \O(G)_{\lambda + \mu}.$$ 
\end{lemma}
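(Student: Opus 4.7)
The plan is to reduce the claim to a routine check on matrix coefficients using the explicit formula of Proposition \ref{prop:poissonlie}, combined with the fact (Definition-Proposition \ref{def:filtration}) that $\O(G)$ is a $\Lambda$-filtered algebra for the Peter-Weyl filtration. The main observation driving the proof is that the infinitesimal left and right translation operators $X^L, X^R$ (for $X \in \g$) each preserve the Peter-Weyl filtration; everything else is then a formal manipulation.

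First, I would verify the key auxiliary claim: for every $X \in \g$, the operators $X^L$ and $X^R$ on $\O(G)$ preserve the filtration, i.e.\ they send $\O(G)_{\leq \lambda}$ into itself. This is done purely on matrix coefficients. For any finite-dimensional $G$-module $M$, $\xi \in M^*$ and $v \in M$, a direct computation from the definitions gives
\begin{equation*}
X^L c_{\xi,v}^M = c_{\xi, X\cdot v}^M, \qquad X^R c_{\xi, v}^M = - c_{X\cdot \xi, v}^M,
\end{equation*}
where $X \cdot \xi$ denotes the dual action on $M^*$. In particular, $X^L$ and $X^R$ send matrix coefficients of $M$ to matrix coefficients of the \emph{same} representation $M$; since $\O(G)_{\leq \lambda}$ is by definition spanned by matrix coefficients of representations whose irreducible summands have highest weights $\leq \lambda$, both operators preserve $\O(G)_{\leq \lambda}$.

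With this in hand, the lemma is immediate. Write $r = \sum_i a_i \otimes b_i$ and take $f_1 \in \O(G)_{\leq \lambda}$, $f_2 \in \O(G)_{\leq \mu}$. By the previous paragraph, each $a_i^L f_1$ and $a_i^R f_1$ lies in $\O(G)_{\leq \lambda}$, and similarly each $b_i^L f_2$ and $b_i^R f_2$ lies in $\O(G)_{\leq \mu}$. The filtered algebra structure of Definition-Proposition \ref{def:filtration} (which in turn rests on Lemma \ref{lem:tensorproducts}) then shows that each summand $(a_i^L f_1)(b_i^L f_2)$ and $(a_i^R f_1)(b_i^R f_2)$ belongs to $\O(G)_{\leq \lambda + \mu}$. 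Taking the finite sum and difference in the formula of Proposition \ref{prop:poissonlie} yields $\{f_1, f_2\} \in \O(G)_{\leq \lambda + \mu}$, as desired.

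The only step that is not entirely formal is the explicit identification $X^L c_{\xi,v}^M = c_{\xi, Xv}^M$ (and its right-invariant analogue). I expect this to be the main technical point, but it is a standard calculation from the definition of left/right-invariant vector fields applied to the function $g \mapsto \xi(g \cdot v)$, so no serious obstacle is anticipated.
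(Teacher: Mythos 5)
Your proof is correct and follows essentially the same route as the paper: establish that the infinitesimal translations $X^L$ and $X^R$ preserve the space of matrix coefficients of each fixed representation (hence preserve the Peter-Weyl filtration degree by degree), then invoke the explicit formula from Proposition~\ref{prop:poissonlie} together with the filtered-algebra property of Definition-Proposition~\ref{def:filtration}. The only discrepancy is cosmetic: the paper writes $X^L c_{f,v} = c_{Xf,v}$ and $X^R c_{f,v} = c_{f,Xv}$, whereas you write $X^L c_{\xi,v} = c_{\xi,Xv}$ and $X^R c_{\xi,v} = -c_{X\xi,v}$; this is a difference in sign/left-right convention for invariant vector fields and for the contragredient action, and is immaterial to the argument since all that is used is that both operators send matrix coefficients of $M$ to matrix coefficients of the same $M$.
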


\begin{proof} Let $X \in \g$ and $c \in \O(G)$. Write $X^L c$ and $X^R c$ for the image of $c$ under the action of $X$ by left- and right-invariant vector fields. If $c = c_{f,v}$ is a matrix coefficient for a representation $V$ of $\g$, then $X^L c_{f, v} = c_{Xf, v}$ and $X^R c_{f,v} = c_{f, Xv}$. Thus, the actions $X^R$ and $X^L$ preserve the space of matrix coefficients for a given representation. The result now follows from Proposition \ref{prop:poissonlie}. \end{proof}

\begin{definition}\label{def:vinbergpoisson} Define a map $\{, \} : \O(\VinbG) \otimes \O(\VinbG) \rightarrow \O(\VinbG)$ by 
$$\{a_\lambda z^\lambda, a_\mu, z^\mu \} = \{a_\lambda, a_\mu\} z^{\lambda + \mu},$$  
where $a_\lambda \in \O(G)_{\leq \lambda}$, $a_\mu \in \O(G)_{\leq \mu}$, and the right-hand side invokes the standard Poisson-Lie bracket on $\O(G)$.
\end{definition}

\begin{prop}\label{prop:poisson} We have:
\begin{enumerate}
 \item The map defined above is a Poisson bracket on $\O(\VinbG)$, compatible with the $\Lambda$-grading.
 \item There is an induced Poisson structure on the wonderful compactification $\Gadbar$. 
 \item For $I \subseteq \Delta$, the fiber $\pi\inv(e_I)$ is a Poisson subvariety of $\VinbG$, and so is its $T$-orbit. Consequently, there is an induced Poisson structure on the orbit $\Orb_I$.
\end{enumerate} \end{prop}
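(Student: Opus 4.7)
The plan is to derive everything from the standard Poisson-Lie bracket on $\O(G)$ via Lemma \ref{lem:poissonfiltration}. For part (1), I would first observe that the formula in Definition \ref{def:vinbergpoisson} is manifestly well-defined and compatible with the $\Lambda$-grading: Lemma \ref{lem:poissonfiltration} guarantees $\{a_\lambda, a_\mu\} \in \O(G)_{\leq \lambda + \mu}$, so $\{a_\lambda, a_\mu\}\, z^{\lambda + \mu}$ lies in the degree $\lambda + \mu$ component of $\O(\VinbG)$. Skew-symmetry and the Jacobi identity then transfer term by term from the corresponding properties of the Poisson-Lie bracket on $\O(G)$, since the $z^\bullet$ factors are central and merely multiply. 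The Leibniz rule requires unpacking the Rees algebra multiplication $(b_\mu z^\mu)(c_\nu z^\nu) = (b_\mu c_\nu) z^{\mu + \nu}$, valid since $b_\mu c_\nu \in \O(G)_{\leq \mu + \nu}$ by Definition-Proposition \ref{def:filtration}, and then applying Leibniz on $\O(G)$.

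For part (2), I would invoke Theorem \ref{thm:MT}, which realizes $\Gadbar$ as the GIT quotient $\VinbG \GIT_\lambda T$ for a regular dominant weight $\lambda$. Since the Poisson bracket from part (1) is compatible with the $\Lambda$-grading, and the $T$-action on $\VinbG$ corresponds precisely to this grading, Lemma \ref{lem:poissonGIT} descends the bracket to a Poisson structure on $\Gadbar$.

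For part (3), the key observation is that every element $z^{\alpha_i} \in \O(\VinbG)$ is Poisson central. Indeed, $z^{\alpha_i}$ corresponds to $1 \cdot z^{\alpha_i}$ with $1 \in \O(G)_{\leq 0} \subseteq \O(G)_{\leq \alpha_i}$ (using $0 \leq \alpha_i$ in the dominance ordering), so Definition \ref{def:vinbergpoisson} gives
\[
\{z^{\alpha_i}, a_\mu z^\mu\} \;=\; \{1, a_\mu\}\, z^{\alpha_i + \mu} \;=\; 0.
\]
Consequently, the vanishing ideal of $\pi^{-1}(e_I)$, generated by $\{z^{\alpha_i}\}_{i \notin I} \cup \{z^{\alpha_i} - 1\}_{i \in I}$, is a Poisson ideal, so $\pi^{-1}(e_I)$ is a Poisson subvariety. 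Localizing at the (Poisson central) $z^{\alpha_i}$ for $i \in I$ then yields the Poisson subvariety structure on $\pi^{-1}(T \cdot e_I)$, since localization at Poisson central elements preserves Poisson brackets (this is the same mechanism used in the proof of Lemma \ref{lem:poissonGIT}). Finally, Proposition \ref{prop:orbitsGIT} identifies $\Orb_I = \pi^{-1}(T \cdot e_I) \GIT_\lambda T$, and a second application of Lemma \ref{lem:poissonGIT} transports the $\Lambda$-graded Poisson structure to $\Orb_I$.

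The proof is essentially formal once Lemma \ref{lem:poissonfiltration} is in hand, so I do not anticipate a serious obstacle. The only mildly delicate point is the Poisson centrality of the $z^{\alpha_i}$, which is the linchpin linking the orbit stratification of $\A$ to Poisson substructures of $\VinbG$; this centrality reflects the fact that the abelianization map $\pi : \VinbG \to \A$ plays the role of a moment map for the grading, and its components necessarily Poisson-commute with everything in the graded direction.
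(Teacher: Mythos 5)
Your proposal is correct and follows essentially the same route as the paper: part (1) from Lemma~\ref{lem:poissonfiltration}, part (2) from Theorem~\ref{thm:MT} and Lemma~\ref{lem:poissonGIT}, and part (3) by showing the ideal defining $\pi^{-1}(e_I)$ is Poisson and then localizing. The one place you are actually sharper than the paper is part (3): the paper's proof states that $\O(\A)\to\O(\VinbG)$ is a map of Poisson algebras (with $\O(\A)$ carrying the zero bracket) and deduces that fibers of $\pi$ are Poisson subvarieties, but a Poisson map alone does not give that the pulled-back ideal is a Poisson ideal — one needs the stronger fact, which you isolate explicitly, that each $z^{\alpha_i}$ is a Casimir, i.e.\ $\{z^{\alpha_i}, -\}=0$ on all of $\O(\VinbG)$, not merely that the $z^{\alpha_i}$ Poisson-commute among themselves. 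Your computation $\{z^{\alpha_i}, a_\mu z^\mu\}=\{1,a_\mu\}z^{\alpha_i+\mu}=0$ is exactly the justification the paper's "this implies" step silently relies on, so your write-up is the more careful one.
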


\begin{proof} The first statement follows from Lemma \ref{lem:poissonfiltration}. The second statement follows from Lemma \ref{lem:poissonGIT} and Theorem \ref{thm:MT}. Endow $\O(\A)$ with the zero Poisson bracket, so that any point of $\A$ is a Poisson subvariety. The map $\O(\A) \rightarrow \O(\VinbG)$ is a map of Poisson algebras. This implies that the fiber $\pi\inv(e_I)$ and the preimage $\pi\inv(T \cdot e_I)$ of the $T$-orbit of $e_I$ are each affine Poisson subvarieties of $\VinbG$. The Poisson bracket on the algebra of functions $\O(\pi\inv(T \cdot e_I))$ is compatible with the $\Lambda$-grading. The third statement now follows from Lemma \ref{lem:poissonGIT} and Proposition \ref{prop:orbitsGIT}. \end{proof}

\begin{rmk} The methods of the proof can be used to show that, at least when $G$ is simply-connected, any Lie bialgebra structure on $\g$ leads to a Poisson structure on the Vinberg semigroup, on the wonderful compactification, and on any orbit. Poisson structures on the wonderful compactification have been studied extensively by Lu and Yakimov \cite{LY1, LY2}.
\end{rmk}

\subsection{The case of $\SL_2$} \label{subsec:sl2}
In this section, we describe in detail the constructions of the previous sections for the case of $\SL_2$. 

Identify the maximal torus of $\SL_2$ with $\C^\times$, the weight lattice $\Lambda_{\SL_2}$  with the integers $\Z$, the cone of dominant weights $\Lambda_{\SL_2}^+$ with the cone of nonnegative integers, and the root lattice $\Lambda_R$ with the sublattice $2\Z$ generated by the positive simple root $\alpha_1 = 2$. A dominant weight (i.e.\ nonnegative integer) $n$ is regular if and only if $n$ is nonzero. The partial order on $\Z$ from Definition \ref{def:gorder} becomes $$m \leq n \ \text{if and only if $n-m$ is a nonnegative multiple of 2}.$$  For any nonnegative integer $n$, write $V_n = \Sym^n (\C^2)$ for the irreducible representation of $\SL_2$ of highest weight $n$. As a special case of Lemma \ref{lem:tensorproducts}, we have that, if $V_k$ appears as an irreducible representation of the tensor product $V_n \otimes V_m$, then $k \leq n+ m$. 

The algebra of functions $\O(\SL_2)$ is given by $\O(\SL_2) = \C[a,b,c,d]/( ad-bc =1 ).$  This is a Hopf algebra with $$ \Delta(a) = a \otimes a + b \otimes c, \quad \Delta(b) = a \otimes b + b \otimes d, \quad \Delta(c) = a \otimes c + c \otimes d, \quad \Delta(c) =  b\otimes c + d \otimes d,$$ $$ \epsilon(a) =\epsilon(d) = 1, \quad \epsilon(b) = \epsilon(c) =0, \quad S(a) = d, \quad S(b) = -b, \quad S(c) = -c, \quad S(d) = a.$$ We will use the same notation for elements of $\C[a,b,c,d]$ and their images in $\O(\SL_2)$. 

\begin{theorem}[Peter-Weyl Theorem] The map of  matrix coefficients gives an $\SL_2 \times \SL_2$-equivariant isomorphism: $\phi: \bigoplus_{n} V_n \otimes V_n^* \stackrel{\sim}{\longrightarrow} \O(SL_2).$ \end{theorem}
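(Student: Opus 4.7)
My plan is to obtain this statement as a direct specialization of the general Peter--Weyl theorem (Theorem \ref{thm:peterweyl}) to the case $G = \SL_2$, combined with the identifications established at the start of Section \ref{subsec:sl2}. Specifically, I would first recall that the cone of dominant weights $\Lambda_{\SL_2}^+$ has been identified with $\Z_{\geq 0}$, and that for each nonnegative integer $n$ the corresponding irreducible representation is $V_n = \Sym^n(\C^2)$. Plugging these identifications into Theorem \ref{thm:peterweyl} yields an isomorphism of $U\mathfrak{sl}_2$-bimodules $\bigoplus_{n \geq 0} V_n^* \otimes V_n \stackrel{\sim}{\longrightarrow} \O(\SL_2)$ sending $f \otimes v$ to the matrix coefficient $g \mapsto f(g\cdot v)$. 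The statement of the theorem writes the summands as $V_n \otimes V_n^*$ rather than $V_n^* \otimes V_n$, but the flip $v \otimes f \leftrightarrow f \otimes v$ is a canonical isomorphism of vector spaces, and matching it with the same matrix coefficient formula produces the stated $\phi$.

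For the $\SL_2 \times \SL_2$-equivariance, I would unpack the definitions: with the convention that $(g_1,g_2) \in \SL_2 \times \SL_2$ acts on $h \in \O(\SL_2)$ by $((g_1,g_2)\cdot h)(g) = h(g_1^{-1} g g_2)$, and acts on $V_n \otimes V_n^*$ by $(g_1,g_2)\cdot(v\otimes f) = (g_2\cdot v) \otimes (g_1\cdot f)$ (where $g_1$ acts on $V_n^*$ by the contragredient representation), a one-line check using $f(g_1^{-1} g g_2 \cdot v) = (g_1 \cdot f)(g \cdot (g_2\cdot v))$ shows that $\phi$ intertwines the two actions. The only subtle step is bookkeeping the contragredient action on $V_n^*$ versus the left/right action conventions on $\O(\SL_2)$; all the substantive content (complete reducibility, the parametrization of irreducibles, and surjectivity onto $\O(\SL_2)$) is already packaged inside Theorem \ref{thm:peterweyl}, so there is no genuine obstacle beyond this notational verification.
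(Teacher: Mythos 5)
Your proposal is correct and matches the paper's implicit approach: the paper gives no separate proof of this statement and simply restates Theorem \ref{thm:peterweyl} in the $\SL_2$ setting using the identifications $\Lambda_{\SL_2}^+ \cong \Z_{\geq 0}$ and $V_n = \Sym^n(\C^2)$ laid out at the start of Section \ref{subsec:sl2}. Your careful bookkeeping of the tensor-factor flip and of the $G \times G$ versus $\Ug$-bimodule equivariance is exactly the routine verification the paper leaves implicit.
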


The Peter-Weyl filtration on $\O(\SL_2)$ is given by $\O(\SL_2)_{\leq n} =  \phi\left( \bigoplus_{m \leq n} V_m^* \otimes V_m \right)$ for $n \in \Z$. The proof of the following lemma is straightforward.

\newcommand{\pr}{\text{\rm pr}}

\begin{lemma}\label{lem:leqn} For $n \in \Z$, consider the subspace of $\C[a,b,c,d]$ spanned by monomials $a^{k_1} b^{k_2} c^{k_3} d^{k_4}$ with $ k_1+k_2 + k_3 + k_4  \leq n$ (in the usual partial order on $\Z$) and $k_1+k_2 + k_3 + k_4  \equiv n \mod 2.$ The image of this subspace under the quotient map $\C[a,b,c,d] \rightarrow \O(\SL_2)$ is precisely  $\O(\SL_2)_{\leq n}$. \end{lemma}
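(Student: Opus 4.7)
The plan is to establish the two inclusions between the image of the subspace described in the lemma, which I will call $W_n$, and $\O(\SL_2)_{\leq n}$. Both inclusions rest on two simple facts: (i) the matrix coefficients of the fundamental representation $V_1$ are precisely the generators $a, b, c, d$ in $\O(\SL_2)$, and (ii) for any $k \geq 0$, the tensor power decomposes as
\[ V_1^{\otimes k} = \bigoplus_{\substack{m \leq k \\ m \equiv k \bmod 2}} V_m^{\oplus \mu_{m,k}} \]
for certain positive multiplicities, with $V_k = \Sym^k V_1$ appearing as a direct summand. Note also that the defining relation $ad - bc - 1$ of $\O(\SL_2)$ is a sum of even-degree pieces, so parity of total degree is a well-defined invariant of elements of $\O(\SL_2)$; in particular, the two conditions defining $W_n$ descend sensibly to $\O(\SL_2)$.

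For the inclusion $\mathrm{image}(W_n) \subseteq \O(\SL_2)_{\leq n}$: I would take a monomial $a^{k_1}b^{k_2}c^{k_3}d^{k_4}$ of total degree $k \leq n$ with $k \equiv n \bmod 2$. By Lemma \ref{lem:matrixcoeff}(\ref{lem:matrixcoeffbasics}), a product of matrix coefficients for $V_1$ is a matrix coefficient for $V_1^{\otimes k}$, and the matrix coefficient map for a direct sum of representations decomposes as the sum of the individual matrix coefficient maps. Applying this to fact (ii), the monomial becomes a sum of matrix coefficients of representations $V_m$ with $m \leq k \leq n$ and $m \equiv k \equiv n \bmod 2$; these are precisely the $m$ with $m \leq n$ in the dominance order, so the monomial lies in $\O(\SL_2)_{\leq n}$.

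For the reverse inclusion $\O(\SL_2)_{\leq n} \subseteq \mathrm{image}(W_n)$: I would start with a matrix coefficient $c^{V_m}_{f,v}$ with $m \leq n$ in the dominance order, i.e.\ $m \leq n$ and $m \equiv n \bmod 2$. Using the inclusion $\phi \colon V_m = \Sym^m V_1 \hookrightarrow V_1^{\otimes m}$ as an $\SL_2$-equivariant map and an arbitrary extension $\tilde f \in (V_1^{\otimes m})^*$ of $f$ (so that $\phi^*(\tilde f) = f$), the transport formula in Lemma \ref{lem:matrixcoeff}(\ref{lem:matrixcoeffmorphism}) gives
\[ c^{V_m}_{f,v} = c^{V_1^{\otimes m}}_{\tilde f, \phi(v)}. \]
By fact (i) and Lemma \ref{lem:matrixcoeff}(\ref{lem:matrixcoeffbasics}), the right-hand side is a homogeneous polynomial of degree exactly $m$ in $a,b,c,d$. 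Since $m \leq n$ and $m \equiv n \bmod 2$, it lies in the image of $W_n$.

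The main obstacle, though minor, is keeping the bookkeeping between the abstract Peter-Weyl filtration and the concrete polynomial degree consistent: one must be sure that the parity condition survives passage to $\O(\SL_2)$ (it does, by the parity of the defining relation), and that the matrix coefficient formulas from Lemma \ref{lem:matrixcoeff} are invoked in the correct direction for each inclusion. Everything else is a direct translation via the decomposition of $V_1^{\otimes k}$.
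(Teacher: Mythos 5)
Your proof is correct and is the natural argument; the paper itself omits the proof with the remark that it is straightforward. Both inclusions are handled cleanly: degree-$k$ monomials in $a,b,c,d$ are matrix coefficients of $V_1^{\otimes k}$, which decomposes into $V_m$'s with $m\leq k$ and $m\equiv k\pmod 2$; conversely, since the category is semisimple, $V_m\cong\Sym^m V_1$ splits off $V_1^{\otimes m}$, and the transport formula of Lemma \ref{lem:matrixcoeff}(\ref{lem:matrixcoeffmorphism}) realizes $c^{V_m}_{f,v}$ as a degree-$m$ homogeneous polynomial. Two minor cosmetic points: you reuse the symbol $\phi$ for both the paper's matrix-coefficient isomorphism and your inclusion $V_m\hookrightarrow V_1^{\otimes m}$, which should be renamed; and the remark about parity surviving the quotient, while a pleasant observation, is not actually needed since the lemma is phrased in terms of the image of a subspace of $\C[a,b,c,d]$ rather than a preimage.
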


For example:
\begin{itemize}
\item $\O(\SL_2)_{\leq 0} = \phi(V_0 \otimes V_0^*) =  \text{Span}\{1\}$
\item $\O(\SL_2)_{\leq 1} = \phi(V_1 \otimes V_1^*) = \text{Span}\{a, b,c,d\}$
\item $\O(\SL_2)_{\leq 2} = \phi(V_0 \otimes V_0^* \oplus V_2 \otimes V_2^*) = \text{Span}\{1, a^2, b^2, c^2, d^2, ab, ac, \dots\}$
\item $\O(\SL_2)_{\leq 3} = \phi(V_1 \otimes V_1^* \oplus V_3 \otimes V_3^*) = \text{Span}\{ \text{\rm degree 3 and degree 1 monomials}\}$
\end{itemize}

Recall from Section \ref{subsec:projbasic} our convention that the notation `$\Proj$' in roman font indicates a projective scheme (i.e.\ a space), while `$\uProj$' in sans serif font indicates a category.

\begin{prop}\label{prop:assgrad} The associated graded algebra $\text{\rm gr}(\O(\SL_2))$ is the homogeneous coordinate ring of $\P^1 \times \P^1$. Consequently, we have an isomorphism of varieties: $$\Proj( \text{\rm gr}(\O(\SL_2))) \simeq  \P^1 \times \P^1.$$ \end{prop}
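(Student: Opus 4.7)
My plan is to identify $\gr(\O(\SL_2))$ explicitly as a quotient of a polynomial ring, and then to recognize that quotient as the homogeneous coordinate ring of the Segre quadric.

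First, I will compute the Rees algebra $\Rees(\SL_2) = \bigoplus_{n \geq 0} \O(\SL_2)_{\leq n} z^n$ by producing generators. The elements $A := az$, $B := bz$, $C := cz$, $D := dz$ lie in $\Rees(\SL_2)_1$ by Lemma \ref{lem:leqn}, and a direct calculation in $\O(\SL_2) \otimes \C[z, z^{-1}]$ yields $AD - BC = (ad - bc)z^2 = z^2$. Using Lemma \ref{lem:leqn} again, any element $fz^n \in \Rees(\SL_2)_n$ can be represented by a polynomial $f \in \C[a,b,c,d]$ whose monomials $a^{k_1}b^{k_2}c^{k_3}d^{k_4}$ have total degree $k \leq n$ with $k \equiv n \pmod{2}$; each such monomial equals $A^{k_1}B^{k_2}C^{k_3}D^{k_4}(AD - BC)^{(n-k)/2}$ after multiplication by $z^n$. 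Hence $A, B, C, D$ generate $\Rees(\SL_2)$. Algebraic independence of $A, B, C, D$ in $\O(\SL_2) \otimes \C[z, z^{-1}]$ is a direct check (the map $\SL_2 \times \C^\times \to \A^4$ sending $(g, z) \mapsto zg$ has dense image), yielding an isomorphism of graded algebras $\Rees(\SL_2) \cong \C[A, B, C, D]$ with $A, B, C, D$ in degree $1$.

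Next, multiplication by $z^2 = AD - BC$ sends $\Rees(\SL_2)_{n-2} = \O(\SL_2)_{\leq n-2} z^{n-2}$ into $\O(\SL_2)_{\leq n-2} z^n \subseteq \Rees(\SL_2)_n$, so passing to the quotient by the ideal generated by $z^2$ gives $(\Rees(\SL_2)/(AD - BC))_n = \O(\SL_2)_{\leq n}/\O(\SL_2)_{\leq n-2} = \gr_n(\O(\SL_2))$. Thus $\gr(\O(\SL_2)) \cong \C[A, B, C, D]/(AD - BC)$ as graded algebras. The right-hand side is manifestly the homogeneous coordinate ring of the Segre embedding $\P^1 \times \P^1 \hookrightarrow \P^3$ whose image is the quadric $V(AD - BC)$, so $\Proj(\gr(\O(\SL_2))) \cong \P^1 \times \P^1$.

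The main obstacle is confirming that $\Rees(\SL_2)$ is a polynomial algebra, i.e., that there are no hidden relations among $A, B, C, D$. This is exactly the step that exploits the shape of the Peter-Weyl filtration via Lemma \ref{lem:leqn}: the parity condition there is what allows powers of $AD - BC = z^2$ to bridge the gap between a monomial's polynomial degree $k$ and the filtration index $n$. A convenient independent sanity check is a Hilbert series comparison: Peter-Weyl gives $\dim \gr_n(\O(\SL_2)) = \dim(V_n \otimes V_n^*) = (n+1)^2$, while a binomial computation yields $\dim(\C[A,B,C,D]/(AD - BC))_n = \binom{n+3}{3} - \binom{n+1}{3} = (n+1)^2$, confirming that the natural surjection between them is an isomorphism.
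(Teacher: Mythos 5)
Your proof is correct, and it takes a somewhat different route to the first reduction than the paper does. The paper's proof simply \emph{asserts} $\gr(\O(\SL_2)) \cong \C[a,b,c,d]/(ad-bc)$ as the first step (it is the standard ``replace the relation by its top-degree part'' argument), then identifies the right-hand side with $\bigoplus_n \Sym^n(\C^2)\otimes\Sym^n(\C^2)$, observes that this is $\bigoplus_n \Gamma(\P^1\times\P^1, \O(1,1)^{\ot n})$, and concludes via ampleness of $\O(1,1)$. You instead first prove that the Rees algebra $\Rees(\SL_2)$ is a free polynomial ring $\C[A,B,C,D]$ (using Lemma~\ref{lem:leqn} and the trick $a^{k_1}b^{k_2}c^{k_3}d^{k_4}z^n = A^{k_1}B^{k_2}C^{k_3}D^{k_4}(AD-BC)^{(n-k)/2}$, plus an algebraic-independence argument), and then obtain $\gr = \Rees/(z^2) = \C[A,B,C,D]/(AD-BC)$ by the standard $\gr = \Rees/(z)$ mechanism adapted to the $\SL_2$ parity. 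This has the advantage of simultaneously establishing the content of Proposition~\ref{prop:mainresult}(1) ($\VinbSL \cong \Mat_2$), which the paper proves separately later by essentially the same computation you do here; your ordering avoids that duplication. Your final recognition of $\C[A,B,C,D]/(AD-BC)$ as the coordinate ring of the Segre quadric is a more direct way of landing on $\P^1 \times \P^1$, and the Hilbert-series cross-check $\binom{n+3}{3}-\binom{n+1}{3}=(n+1)^2$ is a nice independent confirmation, though not logically necessary. Both proofs are sound; yours is a bit longer but also a bit more self-contained, whereas the paper's version makes the connection to the flag-variety/Cox-ring picture slightly more visible via the $\Sym^n\ot\Sym^n$ step.
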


\begin{proof} First, $\text{\rm gr}(\O(\SL_2)) \simeq \C[a,b,c,d]/(ad-bc)$. The latter is isomorphic as a graded algebra to $\bigoplus_{n}$ $\Sym^n(\C^2) \otimes \Sym^n(\C^2)$ via the map $a \mapsto x \otimes w$, $b \mapsto x \otimes z$, $c \mapsto y \otimes w$, and $d \mapsto y \otimes z,$ where $x,y$ are the coordinate functions in the first copy of $\Sym^\bullet(\C^2)$ and $w,z$ are the coordinates in the second copy. Now, $\Sym^n(\C^2) \otimes \Sym^n(\C^2)$ is the space of global sections of the line bundle $\O(n,n) = \O(1,1)^{\otimes n}$ on $\P^1\times \P^1$. The line bundle $\O(1,1)$ is ample since it is the pullback of $\O_{\P^3} (1)$ under the Segre embedding $\P^1 \times \P^1 \rightarrow \P^3$. Therefore, there is an isomorphism $\P^1 \times \P^1 \stackrel{\sim}{\longrightarrow} \Proj\left(  \bigoplus_n \Gamma \left( \P^1 \times \P^1, \O(1,1)^{\otimes n}\right)\right).$  \end{proof}

 \newcommand{\ReesSL}{\text{\rm Rees}(\SL_2)}

\begin{prop}\label{prop:mainresult} We have the following:
\begin{enumerate}
\item The Vinberg semigroup $\Vinb_{\SL_2}$ is isomorphic to the semigroup  $\Mat_2$ of two by two matrices, and the action of $\C^\times$ on $\Vinb_{\SL_2}$ coincides with the scaling action on $\Mat_2$.

\item The map $\pi : \Vinb_{\SL_2} \rightarrow \mathbb{A}$ coincides with the determinant map $\det : \Mat_2 \rightarrow \C$. 

\item The wonderful compactification of $\PSL_2$ is $\P^3 = \Proj(\O(\Vinb_{\SL_2}))$.

\item \label{prop:classicalorbits} The stratification of $\barPSL = \P^3$ into $\SL_2 \times \SL_2$ orbits is given by:  $$ \overline{\PSL_2} \ = \  \PSL_2  \ \coprod \ ( \P^1 \times \P^1).$$ 
\end{enumerate}
\end{prop}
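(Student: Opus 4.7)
The plan is to establish the four claims in sequence, with the main work going into the first: identifying the Rees algebra of $\O(\SL_2)$ for the Peter–Weyl filtration as a polynomial ring in four variables. Everything else then follows fairly quickly from the general results assembled earlier (Theorem \ref{thm:MT}, Proposition \ref{prop:orbitsGIT}).

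For (1), I would define an algebra map $\Phi: \C[A,B,C,D] \to \ReesSL \subset \O(\SL_2)[z,z^{-1}]$ by sending each generator $A,B,C,D$ to $az, bz, cz, dz$, all of which lie in the degree-$1$ piece $\O(\SL_2)_{\leq 1}\cdot z$ (by the $n=1$ case of Lemma \ref{lem:leqn}). Under $\Phi$, the element $AD-BC$ maps to $(ad-bc)z^{2}=z^{2}$, using the defining relation in $\O(\SL_2)$. Surjectivity in degree $n$ then follows because Lemma \ref{lem:leqn} shows that $\O(\SL_2)_{\leq n}\cdot z^{n}$ is spanned by $a^{k_1}b^{k_2}c^{k_3}d^{k_4}\, z^{n}$ with $\sum k_i\le n$ and $\sum k_i\equiv n\pmod 2$, and we can produce each such generator as $\Phi(A^{k_1}B^{k_2}C^{k_3}D^{k_4})\cdot\Phi\!\left((AD-BC)^{(n-\sum k_i)/2}\right)$. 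For injectivity, I would compare Hilbert functions: the dimension of $\C[A,B,C,D]_{n}$ is $\binom{n+3}{3}$, and a short induction using Lemma \ref{lem:leqn} (splitting monomials by their total degree and parity) shows the same count for $\O(\SL_2)_{\leq n}\cdot z^n$. Matching dimensions plus surjectivity gives the claimed isomorphism $\Rees(\SL_2)\cong\C[A,B,C,D]$, hence $\Vinb_{\SL_2}\cong\A^{4}=\Mat_2$; and because the $\Lambda$-grading comes from scaling $z$, it translates to the scaling $\C^\times$-action on $\Mat_2$.

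For (2), I would trace through the definition of the abelianization map $\pi$: it is dual to the inclusion $\C[z^{\alpha_1}]=\C[z^{2}]\hookrightarrow\Rees(\SL_2)$, and under the isomorphism of step 1, the generator $z^{2}$ corresponds precisely to $AD-BC$. Hence $\pi$ is the map $\Mat_2\to\A^{1}$ given by $AD-BC$, i.e.\ the determinant. Part (3) then follows immediately: by Theorem \ref{thm:MT}, $\overline{\PSL_2}=\Vinb_{\SL_2}\GIT_\lambda T$ for any regular dominant $\lambda$; taking $\lambda=1$ gives $\Proj\bigl(\bigoplus_{n\geq 0}\C[A,B,C,D]_{n}\bigr)=\Proj(\C[A,B,C,D])=\P^{3}$.

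For (4), I would apply Proposition \ref{prop:orbitsGIT}, noting $\Delta=\{1\}$, so $\mathbb{A}\cong\A^{1}$ has exactly two $T$-orbits, $T\cdot e_\Delta=\C^\times$ and $T\cdot e_\emptyset=\{0\}$. Under the identifications above, $\pi^{-1}(T\cdot e_\Delta)$ is the locus of invertible matrices, i.e.\ $\GL_2$, and its GIT quotient by scaling is $\PGL_2=\PSL_2$, recovering $\Orb_\Delta$. The fiber $\pi^{-1}(0)$ is the quadric $\{AD-BC=0\}\subset\A^{4}$ of rank-$\leq 1$ matrices; rank-one matrices factor as $vw^{T}$ with $(v,w)\sim(\mu v,\mu^{-1}w)$, so modding out the residual scaling gives $\P^{1}\times\P^{1}$ (equivalently, the Segre quadric inside $\P^{3}$). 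The main obstacle in this whole argument is really the first step, where the parity condition in Lemma \ref{lem:leqn} must be handled carefully so that the generator $AD-BC$ hits $z^{2}$ and not $1$; once the identification $\Rees(\SL_2)\cong\C[A,B,C,D]$ is in hand, the remaining parts are direct consequences of the general GIT and orbit descriptions.
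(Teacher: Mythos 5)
Your proof is correct and follows essentially the same route as the paper: identify $\Rees(\SL_2)$ with $\C[A,B,C,D]$ via $A,B,C,D \mapsto az,bz,cz,dz$, recognize $z^2$ as $AD-BC$, then pass to the GIT quotient and read off the stratification from the fibers of the determinant. The only differences are cosmetic --- you spell out injectivity of the map $\C[A,B,C,D]\to\Rees(\SL_2)$ via a Hilbert-function comparison (the cleanest version of which comes straight from Peter--Weyl: $\dim\O(\SL_2)_{\leq n}=\sum_{m\leq n,\;m\equiv n\,(2)}(m+1)^2=\binom{n+3}{3}$), where the paper simply asserts there are no relations; and in part (4) you invoke Proposition~\ref{prop:orbitsGIT} directly, where the paper re-derives the orbit decomposition by hand from the determinant fibers.
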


\begin{proof} The algebra of functions on the Vinberg semigroup $\Vinb_{\SL_2}$ is given by the Rees algebra  $\O(\Vinb_{\SL_2})$ $=$ $\bigoplus_{n \geq 0} $ $\O(\SL_2)_{\leq n} z^n.$  Lemma \ref{lem:leqn} and the relation $z^2 = (az)(dz) - (bz)(cz)$ together imply that the subspace $\O(\SL_2)_{\leq n} z^n$ coincides with the span  of monomials $$(az)^{k_1^\prime} (bz)^{k_2^\prime} (cz)^{k_3^\prime} (dz)^{k_4^\prime} $$ with  $k_1^\prime+k_2^\prime + k_3^\prime + k_4^\prime  \leq n$ (under the usual partial order on $\Z$). Hence, $\O(\Vinb_{\SL_2})$ is a commutative algebra on the four generators $az$, $bz$, $cz$, $dz$ and no relations. It is straightforward to verify that the coproduct of $\O(\Vinb_{\SL_2})$ coincides with that of $\O(\Mat_2)$. Since $\Vinb_{\SL_2} = \Spec(\O(\Vinb_{\SL_2}))$, this proves the first statement. 

The second statement follows from the relation $z^2 = (az)(dz) - (bz)(cz)$. Fix the regular dominant weight $n =1$, regarded as a character of the maximal torus $T = \C^\times$ of $\SL_2$. The third statement follows from the computation:
$$\overline{\PSL_2}  = \VinbSL \text{ $\!$/$\! \!$/$\!$}_1  \C^\times =   \Mat_2 \text{ $\!$/$\! \!$/$\!$}_1  \C^\times =( \Mat_2 \setminus \{0\})/\C^\times = \P^3.$$ 

To prove the last statement, first note that each $\SL_2 \times \SL_2$ orbit on $\Mat_2 \setminus \{0\}$ has the form $\det\inv(d)$ for some $d \in \C$, where $\det :  \Mat_2 \setminus \{0\} \rightarrow \C$ is the determinant map. If $d \neq 0$, then one uses the $\C^\times$-action on  $\Mat_2 \setminus \{0\}$ to identify $\det\inv(d)$ with $\det\inv(1) =\SL_2$. It follows that the disjoint union $\coprod_{d \in \C^\times} \det\inv(d)$ descends to a single $\SL_2 \times \SL_2$-orbit in $\barPSL$, and it is isomorphic to $\PSL_2$. On the other hand, if $d=0$, then we have:
$$ \det {}^{-1}(0) = \{ \text{\rm matrices of rank 1}\} \simeq \frac{\C^2\setminus \{0\} \times \C^2\setminus \{0\}}{\C^\times}$$
The second identification is given by sending pair of nonzero vectors $(x_1, x_2)$  and $(y_1, y_2)$ to the matrix whose $(i,j)$ entry is $x_i y_j$ for $i,j \in \{1,2\}.$ The $\C^\times$ action on $\Mat_2 \setminus \{0\}$ preserves this space, and the quotient is $\P^1 \times \P^1$. \end{proof}

We give another perspective on the $\SL_2 \times \SL_2$ orbits on  the wonderful compactification $\overline{\PSL_2} = \P^3 = \Proj( \O(\Vinb_{\SL_2}))$. First note that $\O(\Vinb_{\SL_2})$ contains $z^2$, but does not contain $z$. We have
\begin{equation}\label{eq:SL2orbits} \overline{\PSL_2}  =  \Spec  \left( \O(\Vinb_{\SL_2})[(z^2)\inv]^{\C^\times}\right)   \coprod   \Proj  \left(\O(\Vinb_{\SL_2})/  (z^2) \right).\end{equation}
It is straightforward to verify that $\O(\Vinb_{\SL_2})[(z^2)\inv] =  (\O(\SL_2)[z^{\pm 1}])^{\Z/2\Z} = \O(\GL_2)$.  The action of $\C^\times$ on $\O(\Vinb_{\SL_2})[(z^2)\inv]$ corresponds to the (free) action on $\GL_2$ by its center. Thus, $$\Spec  \left(\O(\Vinb_{\SL_2})[(z^2)\inv]^{\C^\times}\right) = \GL_2/\C^\times = \PSL_2.$$ 
A standard argument shows that $\O(\Vinb_{\SL_2})/ (z^2) = \gr(\O(\SL_2))$. By Proposition \ref{prop:assgrad}, we have that $ \Proj  \left(\O(\Vinb_{\SL_2})/ (z^2)\right)  = \P^1 \times \P^1.$ We see that the decomposition (\ref{eq:SL2orbits}) becomes precisely the decomposition of Proposition \ref{prop:mainresult}. 

\begin{rmk} The orbit $\P^1 \times \P^1$ includes in $\overline{\PSL_2} = \P^3$ as the Segre embedding. \end{rmk}

We conclude this section with a brief discussion of Poisson structures. The standard Poisson-Lie bracket on $\O(\SL_2)$ is given by $$ \{a,b\} = ab,  \quad \{a,c\} = ac,  \quad \{b,c\} = 0, \quad \{b,d\} = bd, \quad \{c,d\} = cd, \quad \{a,d\} = 2bc.$$ 
The corresponding Poisson bracket on $\O(\Vinb_{\SL_2})$ is given by
$$ \{az,bz\} = abz^2, \quad \{az,cz\} = acz^2, \quad \{bz,cz\} = 0, \quad \{bz,dz\} = bdz^2,$$ $$\{cz,dz\} = cdz^2,  \quad \{az,dz\} = 2bcz^2.$$


\section{The wonderful compactification for quantum groups}\label{sec:quantumwc}

This section is the heart of the paper, in which we define the quantum coordinate algebra $\O_q(\Vinb_G)$ of Vinberg semigroup and the category of sheaves on the quantum wonderful compactification. We adapt notation from previous sections, especially Section \ref{sec:preliminaries}.

\subsection{Main definitions}\label{subsec:qdefs}

Let $G$ be a connected semisimple algebraic group over $\C$ with Lie algebra $\g$. 

\begin{prop}\label{prop:oqgfiltration}  There is an isomorphism of $\uqg$-bimodules $$\phi :  \bigoplus_{\lambda \in \Lambda^+} \mathcal V_\lambda \otimes \mathcal V_\lambda^* \stackrel{\sim}{\longrightarrow} \OqG,$$ where the sum ranges over irreducible objects in $\mathcal C_q(\g)$ with highest weights in $\Lambda_G^+ = \Lambda^+$. The subspaces $$ \OqG_{\leq \lambda} =  \phi \left( \sum_{\mu\leq \lambda} \mathcal V_\mu^* \otimes \mathcal V_\mu \right),$$ for $\lambda \in \Lambda$, endow $\OqG$ with the structure of a $\Lambda$-filtered algebra. \end{prop}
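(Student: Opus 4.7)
The plan is to derive both statements from the structural results already recorded in Section~\ref{sec:preliminaries}, namely the decomposition of the Hopf dual by matrix coefficients, the semisimplicity of $\mathcal{C}_q(\g)$, and the tensor-product bound of Lemma~\ref{lem:qtensorproducts}.

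First I would establish the bimodule decomposition. By Definition~\ref{def:oqg}, $\OqG$ is the sub-Hopf algebra of $\uqg^\circ$ generated by matrix coefficients of objects in $\mathcal{C}_q(G)$, so Lemma~\ref{lem:family} identifies it as the directed union of the matrix-coefficient spaces $c^M(M^* \otimes M)$ for $M$ in the closure $\widehat{\mathcal{C}_q(G)}$ under finite direct sums and tensor products. Using that $\Lambda_G$ contains the root lattice and is Weyl-invariant, Lemma~\ref{lem:qtensorproducts} shows that tensor products of irreducibles with highest weights in $\Lambda^+$ again decompose into irreducibles of highest weight in $\Lambda^+$; this, together with closure of $\mathcal{C}_q(G)$ under duals (since $-w_0$ preserves $\Lambda^+$), confirms that $\widehat{\mathcal{C}_q(G)}$ consists precisely of finite direct sums of the $\mathcal V_\lambda$ with $\lambda \in \Lambda^+$. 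Semisimplicity of $\mathcal{C}_q(\g)$ then lets me replace the directed union by the direct sum $\bigoplus_{\lambda \in \Lambda^+} c^{\mathcal{V}_\lambda}(\mathcal{V}_\lambda^* \otimes \mathcal{V}_\lambda)$, and Lemma~\ref{lem:matrixcoeff}(2) asserts that each component map $c^{\mathcal{V}_\lambda}$ is injective and $\uqg \times \uqg$-equivariant. Assembling these maps produces the required bimodule isomorphism $\phi$.

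Next I would verify the filtration property. Monotonicity, $\OqG_{\leq \lambda} \subseteq \OqG_{\leq \lambda'}$ for $\lambda \leq \lambda'$, is immediate from the containment of index sets $\{\mu \in \Lambda^+ : \mu \leq \lambda\} \subseteq \{\mu \in \Lambda^+ : \mu \leq \lambda'\}$. For compatibility with multiplication, take homogeneous matrix coefficients $c^{\mathcal{V}_\mu}_{f,v} \in \OqG_{\leq \lambda}$ and $c^{\mathcal{V}_\nu}_{g,w} \in \OqG_{\leq \lambda'}$ with $\mu \leq \lambda$ and $\nu \leq \lambda'$. By Lemma~\ref{lem:matrixcoeff}(3), their product equals $c^{\mathcal{V}_\mu \otimes \mathcal{V}_\nu}_{f \otimes g,\, v \otimes w}$. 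The tensor product decomposes by semisimplicity as $\mathcal{V}_\mu \otimes \mathcal{V}_\nu \cong \bigoplus_\kappa \mathcal{V}_\kappa$ with each $\kappa \leq \mu + \nu$ by Lemma~\ref{lem:qtensorproducts}; applying Lemma~\ref{lem:matrixcoeff}(5) to the projections onto the isotypic summands expresses this matrix coefficient as a sum of matrix coefficients for the $\mathcal{V}_\kappa$. Since $\kappa \leq \mu + \nu \leq \lambda + \lambda'$ by transitivity of the dominance order, the product lies in $\OqG_{\leq \lambda + \lambda'}$.

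The only real technical point is to make sure that passing from $\Lambda_W^+$ (the highest weights for all of $\mathcal{C}_q(\g)$) to the sublattice cone $\Lambda^+ = \Lambda_G^+$ is consistent with tensor products and duals; this is where the inclusion $\Lambda_R \subseteq \Lambda_G$ is used. Once that stability is in hand, the rest of the argument is a formal transcription of the classical Peter--Weyl/filtration argument underlying Definition-Proposition~\ref{def:filtration}, with Lemma~\ref{lem:qtensorproducts} playing the role of its classical counterpart Lemma~\ref{lem:tensorproducts}.
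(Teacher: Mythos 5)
Your proposal is correct and follows essentially the same route as the paper: the paper's own proof simply cites Definition~\ref{def:oqg}, ``general properties of matrix coefficients,'' and Lemma~\ref{lem:qtensorproducts}, which is precisely what you have unpacked via Lemma~\ref{lem:family}, Lemma~\ref{lem:matrixcoeff}, and semisimplicity of $\mathcal{C}_q(\g)$. Your explicit remark about stability of $\Lambda^+=\Lambda_G^+$ under tensoring and duals (via $\Lambda_R\subseteq\Lambda_G$ and $-w_0$) is a worthwhile clarification of a point the paper leaves implicit.
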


\begin{proof} The first statement is a consequence of the definition of the quantum coordinate algebra $\O_q(G)$ (Definition \ref{def:oqg}). General properties of matrix coefficients, together with Lemma \ref{lem:qtensorproducts} imply that the subspaces $\OqG_{\leq \lambda}$ form a filtration. \end{proof}

\begin{definition}\label{def:maindefs} We make the following definitions:
\begin{enumerate}
 \item The quantized coordinate algebra $\OqVinb$ of the Vinberg semigroup for $G$ is defined as the Rees algebra for $\OqG$, that is, as the following $\Lambda$-graded subalgebra of $\OqG \otimes \C[\Lambda]$: $$\OqVinb = \bigoplus_{\lambda \in \Lambda} \OqG_{\leq \lambda} z^\lambda.$$
 \item  The category of quasicoherent sheaves on the quantum wonderful compactification of $\Gad$ is given by $$\QCoh_q\left(\Gbarad\right) = \uProj\left( \OqVinb\right).$$   
\end{enumerate}
\end{definition}

\begin{prop}\label{prop:qvinberg} The algebra $\OqVinb$ has a natural bialgebra structure, forms a flat deformation of the coordinate algebra $\O(\Vinb_G)$ of the Vinberg semigroup, and quantizes the Poisson bracket of Definition \ref{def:vinbergpoisson}. When $q=1$, we recover from $\QCoh_q\left( \Gbarad \right)$ the category of quasicoherent sheaves on the wonderful compactification $\Gbarad$.
\end{prop}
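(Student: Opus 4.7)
The plan is to transport the classical arguments of Lemma \ref{lem:Rees} and Definition-Proposition \ref{def:filtration} to the quantum setting, using the Peter-Weyl filtration of Proposition \ref{prop:oqgfiltration} as the main bridge. For the bialgebra structure, the key observation is that by Lemma \ref{lem:matrixcoeff}(\ref{lem:matrixcoeffbasics}) the coproduct on $\uqg^\circ$ sends the matrix coefficients of a finite-dimensional module $M$ into $c^M \otimes c^M$. Applied to each $\mathcal V_\mu$ and summed over $\mu \leq \lambda$, this yields $\Delta_{\OqG}(\OqG_{\leq\lambda}) \subseteq \OqG_{\leq\lambda} \otimes \OqG_{\leq\lambda}$, so the formulas $\Delta(fz^\lambda) = \Delta_{\OqG}(f)\cdot(z^\lambda \otimes z^\lambda)$ and $\epsilon(fz^\lambda) = \epsilon_{\OqG}(f)$ for $f \in \OqG_{\leq\lambda}$ are well-defined and make $\OqVinb$ into a bialgebra, exactly as in the proof of Lemma \ref{lem:Rees}.

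For the flat deformation claim, I use that when $q$ is not a root of unity the quantum irreducible $\mathcal V_\mu$ has the same $\C$-dimension as the classical $V_\mu$, so Proposition \ref{prop:oqgfiltration} gives $\dim \OqG_{\leq\lambda} = \sum_{\mu\leq \lambda}(\dim \mathcal V_\mu)^2 = \dim \O(G)_{\leq\lambda}$. Working with a standard integral form of $\OqG$ over $\C[q, q^{-1}]$, generated by matrix coefficients of integral lattices inside the $\mathcal V_\mu$, the Peter-Weyl filtration extends integrally and specializes at $q=1$ to its classical counterpart on $\O(G)$. The graded pieces of the Rees construction then match term by term, yielding flatness and the identification $\OqVinb|_{q=1} \cong \O(\Vinb_G)$.

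For the Poisson compatibility, I invoke the fact that $\OqG$ quantizes the standard Poisson-Lie bracket of Proposition \ref{prop:poissonlie}, so $\lim_{q\to 1}[a,b]/(q-1) = \{a,b\}_{\O(G)}$ for $a,b \in \O(G)$. Since the elements $z^\lambda$ are central in the Rees algebra and of deformation degree zero, the commutator of $fz^\lambda$ and $gz^\mu$ in $\OqVinb$ reads $[f,g]\, z^{\lambda+\mu}$, whose semiclassical limit is $\{f,g\}_{\O(G)}\, z^{\lambda+\mu}$; this matches Definition \ref{def:vinbergpoisson} exactly, and compatibility with the $\Lambda$-grading is guaranteed by Lemma \ref{lem:poissonfiltration}. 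Combining the specialization $\OqVinb|_{q=1} \cong \O(\VinbG)$ with Corollary \ref{cor:qcohgadbar} immediately yields $\QCoh_q(\Gbarad)|_{q=1} = \uProj(\O(\VinbG)) \simeq \QCoh(\Gbarad)$. The main obstacle is the rigorous formulation of flatness: one must fix an appropriate integral form of $\OqG$, verify that Proposition \ref{prop:oqgfiltration} holds integrally, and check specialization at $q=1$; once this framework is in place, the remaining assertions follow cleanly from their classical analogues.
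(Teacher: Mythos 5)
Your proof is correct and is essentially a careful expansion of the paper's own argument, which is very terse (it simply cites Proposition \ref{prop:oqgfiltration}, the known fact that $\OqG$ is a flat deformation of $\OG$ quantizing the standard Poisson-Lie bracket, and Corollary \ref{cor:qcohgadbar}). The extra detail you supply — the bialgebra structure via Lemma \ref{lem:matrixcoeff}(\ref{lem:matrixcoeffbasics}) mirroring Lemma \ref{lem:Rees}, the graded dimension match $\dim \OqG_{\leq\lambda} = \dim \O(G)_{\leq\lambda}$ via an integral form, and the semiclassical limit $[f,g]z^{\lambda+\mu} \rightsquigarrow \{f,g\}z^{\lambda+\mu}$ — is exactly how one would unpack what the paper leaves implicit.
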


\begin{proof} The statements about $\OqVinb$ follow from Proposition \ref{prop:oqgfiltration} and the fact that $\OqG$ is a flat deformation of $\OG$, quantizing the standard Poisson-Lie bracket. The final assertion follows from Corollary \ref{cor:qcohgadbar} (but ultimately from \cite{MartensThaddeus, Serre}). \end{proof}

Recall from Definition \ref{def:alphas} that we denote by $\A$ the spectrum of the polynomial subalgebra $\C[z^{\alpha_i}]$ of $\C[z^\lambda]$ generated by the elements $z^{\alpha_i}$ for $i \in \Delta$. The fact that $\alpha_i \geq 0$ for any positive simple root $\alpha_i$ implies the following:

\begin{lemma}\label{lem:qVinbbasics} For any $i\in \Delta$, the element $z^{\alpha_i}$ belongs to $\OqVinb$, and is central. Therefore, $\OqVinb$ defines a sheaf of algebras on $\mathbb{A}$. \end{lemma}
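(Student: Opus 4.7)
To show the membership $z^{\alpha_i} \in \OqVinb$, the plan is to unpack the definition of the Rees algebra: an element of the form $a \cdot z^\lambda$ belongs to $\OqVinb$ exactly when $a \in \OqG_{\leq \lambda}$. Since $0 \leq \alpha_i$ in the dominance order on $\Lambda$ (one has $\alpha_i - 0 = 1 \cdot \alpha_i$, a nonnegative integral combination of positive simple roots), the trivial representation $\mathcal V_0$ contributes to $\OqG_{\leq \alpha_i}$ via the matrix coefficient map $\phi$, and in particular the unit $1 = \phi(1 \otimes 1) \in \OqG$ lies in $\OqG_{\leq \alpha_i}$. Thus $z^{\alpha_i} = 1 \cdot z^{\alpha_i} \in \OqG_{\leq \alpha_i} z^{\alpha_i} \subseteq \OqVinb$.

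For centrality, the key observation is that the element $z^{\alpha_i} \in \OqVinb$ is the image of the unit of $\OqG$ placed into degree $\alpha_i$ of the Rees grading. Multiplication in $\OqVinb$ is induced by multiplication in $\OqG$ together with addition of gradings: for $a \in \OqG_{\leq \mu}$, I would compute directly that
\[ z^{\alpha_i} \cdot (a\, z^\mu) \;=\; (1 \cdot a)\, z^{\alpha_i + \mu} \;=\; (a \cdot 1)\, z^{\mu + \alpha_i} \;=\; (a\, z^\mu) \cdot z^{\alpha_i}, \]
using only that $1$ is central in $\OqG$ and that $\Lambda$ is abelian. It follows that $z^{\alpha_i}$ commutes with every homogeneous element of $\OqVinb$, hence with all of $\OqVinb$.

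For the final assertion, having shown each generator $z^{\alpha_i}$ is central, the subalgebra $\C[z^{\alpha_i}]$ they generate is a central subalgebra of $\OqVinb$. Because $\O(\A) = \C[z^{\alpha_i}]$ by Definition \ref{def:alphas}, this exhibits $\OqVinb$ as an $\O(\A)$-algebra whose underlying $\O(\A)$-module is, degree by degree, a module over the commutative ring $\O(\A)$. This is precisely the data of a quasicoherent sheaf of (generally noncommutative) algebras on $\A$, obtained by localizing $\OqVinb$ along the central subalgebra $\C[z^{\alpha_i}]$.

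The proof is essentially bookkeeping; the only potential obstacle is conceptual rather than computational, namely being careful that the `formal variable' $z^{\alpha_i}$ really does come from the unit of $\OqG$ sitting in a higher filtered piece (which is where the inequality $0 \leq \alpha_i$ is used), rather than from a genuinely new element. Once this is recognized, centrality is immediate from the centrality of $1$ in $\OqG$, and the sheaf of algebras structure follows formally.
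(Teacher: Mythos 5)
Your proof is correct and unpacks precisely the observation the paper uses — the inequality $\alpha_i \geq 0$ in the dominance order puts $1 \in \OqG_{\leq \alpha_i}$ so that $z^{\alpha_i} \in \OqVinb$, centrality is immediate from centrality of $1$ in $\OqG$ and commutativity of $\C[\Lambda]$, and the central subalgebra $\C[z^{\alpha_i}] = \O(\A)$ then yields the sheaf of algebras. The paper treats the lemma as an immediate consequence of $\alpha_i \geq 0$ without elaborating, and your argument fills in exactly that reasoning.
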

 
The choice of positive simple roots endows $\mathbb{A}$ with a coordinate system. Over a point where all coordinates are nonzero, the fiber of the sheaf defined by $\OqVinb$ is isomorphic to $\OqG$. The fiber at a general point is a certain `partial' associated graded algebra for $\OqG$; we describe these algebras in the next two sections.  

\subsection{Filtrations on the quantum coordinate algebra}\label{subsec:qfiltrations}

In this section, we consider certain filtrations on the quantized coordinate algebra $\OqG$ of $G$, and describe the associated graded algebras. These associated graded algebras define the quantum orbits on the wonderful compactification.

Given a subset $I \subseteq \Delta$ of positive simple roots let $\Lambda_I = \Z\{\alpha_i \ | \ i \in I\} \subseteq \Lambda_R$ denote the sublattice of the root lattice spanned by the roots in $I$. Write $[\lambda]$ or $[\lambda]_I$ for the image of $\lambda \in \Lambda$ in $\Lambda/\Lambda_I$.  Define a partial order on $\Lambda/\Lambda_I$ by 
\[ \text{$[\mu]_I \leq [\lambda]_I$ whenever $\lambda - \mu = \sum_{i=1}^r n_i \alpha_i$ with $n_i \in \Z$ if $i \in I$ and $n_i$ is nonnegative if $i \notin I$.} \]

\newcommand{\qfilterlambda}{\OqG_{\leq [\lambda]_I}}
\newcommand{\qfiltermu}{\OqG_{\leq [\mu]_I}}
\newcommand{\qfilterlm}{\OqG_{\leq [\lambda+\mu]_I}}

\begin{definition} \label{def:Ifiltration} For $\lambda \in \Lambda$, define the following subspace of $\OqG$: 
$$\qfilterlambda =  \phi \left( \sum_{[\mu]_I \leq [\lambda]_I } \mathcal V_\mu^* \otimes \mathcal V_\mu \right).$$
\end{definition}

 If $I= \emptyset$, we recover the partial order on $\Lambda$ from Definition \ref{def:gorder}, and $\OqG_{\leq [\lambda]_\emptyset}$ coincides with the subspace $\OqG_{\leq \lambda}$ from Proposition \ref{prop:oqgfiltration}.

\begin{prop}\label{prop:Ioqgfiltration}
The subspaces $\qfilterlambda$  define a filtration on $\OqG$ by $\Lambda/\Lambda_I$. The associated graded algebra has $[\lambda]_I$-graded piece equal to 
$$\phi \left( \bigoplus_{\nu \in \Lambda_I} \mathcal V_{\lambda + \nu}^* \otimes \mathcal V_{\lambda + \nu} \right).$$ 
The coproduct $\Delta$ restricts to a map $\Delta : \qfilterlambda \rightarrow \qfilterlambda \otimes \qfilterlambda.$  \end{prop}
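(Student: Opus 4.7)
The proof naturally splits along the three assertions, and the strategy throughout is to reduce everything to the Peter-Weyl decomposition $\OqG = \bigoplus_{\mu \in \Lambda^+} \phi(\mathcal V_\mu^* \otimes \mathcal V_\mu)$ together with the tensor-product estimate of Lemma \ref{lem:qtensorproducts} and the standard identities for coproducts of matrix coefficients in Lemma \ref{lem:matrixcoeff}. Note first that the subspace $\qfilterlambda$ depends only on the class $[\lambda]_I$, since $\lambda' - \lambda \in \Lambda_I$ makes the condition $[\mu]_I \leq [\lambda]_I$ equivalent to $[\mu]_I \leq [\lambda']_I$; this justifies the notation and makes the filtration indexed genuinely by $\Lambda/\Lambda_I$.

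For the filtration claim I would argue as in Definition-Proposition \ref{def:filtration}. Take $c_1 \in \qfilterlambda$ and $c_2 \in \qfiltermu$, expand each as a sum of matrix coefficients of irreducibles $\mathcal V_{\mu_1}$ and $\mathcal V_{\mu_2}$ with $[\mu_k]_I \leq [\lambda_k]_I$ (here I write $\lambda_1,\lambda_2$ for $\lambda,\mu$ to avoid notation clash), and invoke Lemma \ref{lem:matrixcoeff}(\ref{lem:matrixcoeffbasics}) to see that $c_1 c_2$ lies in the span of matrix coefficients of summands $\mathcal V_\nu \subseteq \mathcal V_{\mu_1} \otimes \mathcal V_{\mu_2}$. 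By Lemma \ref{lem:qtensorproducts} such a $\nu$ satisfies $\nu \leq \mu_1 + \mu_2$ in the dominance order on $\Lambda_W$. Now the bookkeeping: writing $\lambda_k - \mu_k = \sum_i n_i^{(k)} \alpha_i$ (with $n_i^{(k)} \in \Z$ for $i \in I$ and $n_i^{(k)} \geq 0$ for $i \notin I$), and $\mu_1 + \mu_2 - \nu = \sum_i m_i \alpha_i$ with $m_i \geq 0$, one checks $\lambda_1 + \lambda_2 - \nu = \sum_i (n_i^{(1)} + n_i^{(2)} + m_i)\alpha_i$ has coefficients of the required signs, so $[\nu]_I \leq [\lambda_1 + \lambda_2]_I$. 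Together with $1 \in \OqG_{\leq [0]_I}$ this gives the filtration property.

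The associated graded computation is then a bookkeeping consequence of the Peter-Weyl decomposition. Because the subspaces $\qfilterlambda$ are direct sums of the isotypic pieces $\phi(\mathcal V_\mu^* \otimes \mathcal V_\mu)$ indexed by those $\mu \in \Lambda^+$ with $[\mu]_I \leq [\lambda]_I$, the quotient by $\sum_{[\mu]_I < [\lambda]_I} \OqG_{\leq [\mu]_I}$ is canonically identified with the direct sum over those $\mu \in \Lambda^+$ with $[\mu]_I = [\lambda]_I$, i.e., $\mu = \lambda + \nu$ with $\nu \in \Lambda_I$ (and $\lambda + \nu \in \Lambda^+$). This is exactly the stated description of the $[\lambda]_I$-graded piece.

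For the coproduct claim I would apply Lemma \ref{lem:matrixcoeff}(\ref{lem:matrixcoeffbasics}): the coproduct of a matrix coefficient $c^{\mathcal V_\mu}_{f,v}$ is a sum of matrix coefficients of the same representation $\mathcal V_\mu$. Consequently, if $c \in \qfilterlambda$ is a combination of matrix coefficients of irreducibles $\mathcal V_\mu$ with $[\mu]_I \leq [\lambda]_I$, every factor appearing in $\Delta(c)$ is again a matrix coefficient of one of these $\mathcal V_\mu$, so $\Delta(c) \in \qfilterlambda \otimes \qfilterlambda$. The only subtle step in the whole proof is the dominance-order bookkeeping in the first paragraph; once this is in place, the remaining steps are formal consequences of the Peter-Weyl decomposition and the standard coproduct formula.
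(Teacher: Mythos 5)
Your proof is correct and follows essentially the same approach as the paper's: reduce to the Peter-Weyl isotypic decomposition, use the tensor-product estimate to control degrees under multiplication, identify the associated graded piece by taking the quotient of isotypic sums, and invoke Lemma \ref{lem:matrixcoeff}.\ref{lem:matrixcoeffbasics} for the coproduct. You spell out the dominance-order bookkeeping in the filtration step more explicitly than the paper does, and you correctly cite the quantum Lemma \ref{lem:qtensorproducts} (the paper's proof refers to the classical Lemma \ref{lem:tensorproducts}, which is a minor slip since the modules here are $\mathcal V_\mu$).
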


\begin{proof} The fact that the subspaces $\qfilterlambda$ form a filtration follows from general properties of matrix coefficients, together with Lemma \ref{lem:tensorproducts}. For $\lambda \in \Lambda$, the $[\lambda]_I$-th graded piece of the associated graded algebra is given by
$$\frac{\qfilterlambda}{\sum_{[\mu] \leq [\lambda], [\mu] \neq [\lambda]} \qfiltermu}=  \phi \left( \frac{ \bigoplus_{[\mu] \leq [\lambda] } \mathcal V_\mu^* \otimes \mathcal V_\mu  }{  \bigoplus_{[\mu] \leq [\lambda], [\mu] \neq [\lambda]}  \mathcal V_{\mu}^* \otimes \mathcal V_{\mu}   } \right) = \phi \left( \bigoplus_{[\mu] = [\lambda]} \mathcal V_\mu^* \otimes \mathcal V_\mu \right).$$ The set of $\mu \in \Lambda$ with $[\mu] = [\lambda]$ is precisely $\{ \lambda + \nu \ | \ \nu \in \Lambda_I\}$. The final claim is a consequence of Lemma \ref{lem:matrixcoeff}.\ref{lem:matrixcoeffbasics}. \end{proof}

\begin{definition}\label{def:qorbits} For $I \subseteq \Delta$, let $\gr_I(\O_q(G) )$ denote the associated graded algebra of $\O_q(G)$ with the filtration of Definition \ref{def:Ifiltration}. \end{definition}

\begin{example} If $I = \emptyset$, we obtain the full associated graded algebra of $\OqG$. That is, $\gr_{\emptyset} (\OqG) = \bigoplus_{\lambda \in \Lambda^+} \phi \left( \mathcal V_\lambda^* \otimes \mathcal V_\lambda\right).$ At the other extreme, if $I = \Delta$, then $\gr_\Delta(\OqG)$  is isomorphic as an algebra to $\OqG$, and its grading coincides with the grading of $\OqG$ by the finite group  $\Lambda/\Lambda_R$ (which can be naturally identified with the dual of the center $Z(G)$ of $G$):
$$\bigoplus_{[\lambda] \in \Lambda/\Lambda_R}  \left( \sum_{\nu \in \Lambda_R} \OqG_{\leq \lambda + \nu} \right).$$ \end{example}

Observe that $\OqG$ and $\gr_I(\OqG)$ are isomorphic as $\uqg$-bimodules. The algebra $\gr_I(\OqG)$ is a `partial' associated graded algebra, and its multiplication map can be described more explicitly as the composition of the ordinary multiplication map $$\OqG_{\leq \lambda} \otimes \OqG_{\leq \mu} \rightarrow \OqG_{\leq \lambda + \mu} = \phi \left( \bigoplus_{\nu \leq \lambda + \mu} \mathcal V_\nu^* \otimes \mathcal V_\nu\right)$$ with the projection onto the partial sum of the images of those $\mathcal V_\nu \otimes \cV_\nu^*$ such that $\lambda + \mu - \nu$ lies in $\Lambda_I$. 

Recall that $e_I$ denotes the point of $\mathbb{A}$ whose $i$th coordinate is zero if $i \notin I$ and 1 otherwise (Section \ref{subsec:orbits}), and that there is a map $\pi : \Vinb_G \rightarrow \mathbb{A}$ whose preimage over the $T$-orbit of $e_I$ is recovers the orbit $\Orb_I$ as a GIT quotient (Proposition \ref{prop:orbitsGIT}). 

\begin{definition} Let $(\Orb_I)_q = \O_q(\Vinb) \ot_{\O(\mathbb{A})} \O(T \cdot e_I)$ be the $\Lambda$-graded algebra of sections over the orbit $T \cdot e_I \subseteq \mathbb{A}$ of the sheaf  defined by $\OqVinb$ (recalling Lemma \ref{lem:qVinbbasics}). \end{definition}

\begin{prop} The  $\Lambda$-graded algebra $(\Orb_I)_q$ has the following properties:
\begin{itemize}
\item It is isomorphic to the $\Lambda$-graded algebra $\gr_I(\OqG) \ot \C[\Lambda_I]$. 
\item It forms a flat deformation of the algebra $\O(\pi\inv(T \cdot e_I))$.
\item It quantizes the Poisson bracket on $\O(\pi\inv(T \cdot e_I))$ considered in Proposition \ref{prop:poisson}.
\end{itemize}
\end{prop}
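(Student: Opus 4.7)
The plan is to pass from the geometric definition of $(\Orb_I)_q$ to an explicit algebraic description, piece-by-piece in the $\Lambda$-grading, and then match it against the claimed model $\gr_I(\O_q(G)) \otimes \C[\Lambda_I]$. First I would unwind the tensor product. Since $\O(T \cdot e_I)$ is obtained from $\O(\A) = \C[z^{\alpha_i}]$ by killing $z^{\alpha_j}$ for $j \notin I$ and inverting $z^{\alpha_i}$ for $i \in I$, and since the $z^{\alpha_i}$ are central in $\O_q(\Vinb_G)$ by Lemma~\ref{lem:qVinbbasics}, we may rewrite
\[
(\Orb_I)_q \;=\; \Bigl(\O_q(\Vinb_G)\big/\bigl(z^{\alpha_j} : j \notin I\bigr)\Bigr)\bigl[z^{-\alpha_i} : i \in I\bigr].
\]
Quotient and localization commute since the $z^{\alpha_i}$ sit in disjoint index sets.

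Next I would compute the quotient in each $\Lambda$-degree. Multiplication by $z^{\alpha_j}$ on $\O_q(\Vinb_G)$ in degree $\lambda$ is exactly the filtration inclusion $\O_q(G)_{\leq \lambda - \alpha_j} \hookrightarrow \O_q(G)_{\leq \lambda}$, so the quotient in degree $\lambda$ equals $\O_q(G)_{\leq \lambda}\big/\sum_{j \notin I}\O_q(G)_{\leq \lambda - \alpha_j}$. Using the direct-sum decomposition of Proposition~\ref{prop:oqgfiltration}, the condition $\mu \leq \lambda - \alpha_j$ for some $j \notin I$ translates to: in the unique expansion $\lambda-\mu = \sum_i n_i\alpha_i$ with $n_i \geq 0$, some coefficient $n_j$ with $j \notin I$ is nonzero. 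Its complement among $\{\mu \leq \lambda\}$ is $\{\mu \in \Lambda^+ : \lambda - \mu \in \sum_{i \in I}\Z_{\geq 0}\alpha_i\}$. Thus, the quotient in degree $\lambda$ equals
\[
\bigoplus_{\mu \in \Lambda^+,\ \mu \leq_I \lambda} \phi(\mathcal{V}_\mu^* \otimes \mathcal{V}_\mu),
\]
in the notation of Section~\ref{subsec:qfiltrations}. Inverting $z^{\alpha_i}$ for $i \in I$ is a colimit that enlarges the condition $\lambda - \mu \in \sum_{i \in I}\Z_{\geq 0}\alpha_i$ to $\lambda - \mu \in \Lambda_I$, i.e.\ $[\mu]_I = [\lambda]_I$. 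Therefore $(\Orb_I)_{q,\lambda} = \bigoplus_{\mu \in \Lambda^+,\ [\mu]=[\lambda]} \phi(\mathcal{V}_\mu^* \otimes \mathcal{V}_\mu)$, which, with the fine $\Lambda$-grading placing $\phi(\mathcal{V}_\mu^* \otimes \mathcal{V}_\mu)\cdot z^{\lambda-\mu}$ in degree $\lambda$, is exactly the degree-$\lambda$ component of $\gr_I(\O_q(G)) \otimes \C[\Lambda_I]$. For the algebra structure, I would trace the product: in both descriptions it is induced from the $\O_q(G)$-multiplication on matrix coefficients, post-composed with the projection onto the summands whose highest weight $\nu$ satisfies $[\nu]_I = [\lambda+\mu]_I$ (which is precisely the definition of the $\gr_I$-multiplication), combined with the obvious multiplication of $z^\nu$'s.

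The flat deformation claim then follows automatically: from the description just established, the underlying $\Lambda$-graded vector space $(\Orb_I)_q = \bigoplus_{\lambda}\bigoplus_{\mu \in \Lambda^+,\,[\mu]=[\lambda]} \mathcal{V}_\mu^* \otimes \mathcal{V}_\mu$ is independent of $q$, and specializing at $q=1$ reproduces $\O(\pi^{-1}(T \cdot e_I))$ by Proposition~\ref{prop:orbitsGIT} and the classical analogue of the calculation above. For the Poisson quantization, I would appeal to Proposition~\ref{prop:qvinberg}, which says that the bracket on $\O(\Vinb_G)$ of Definition~\ref{def:vinbergpoisson} is quantized by $\O_q(\Vinb_G)$; since the elements $z^{\alpha_i}$ are central in $\O_q(\Vinb_G)$ (hence their classical limits are Poisson-central, as is verified directly in Proposition~\ref{prop:poisson}), the quotient-and-localization procedure is compatible with the Poisson structure, and the quantization property descends to $(\Orb_I)_q$.

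The main obstacle is the middle step: verifying that the algebra structure transported through the quotient-then-localize description genuinely agrees with the tensor product algebra structure on $\gr_I(\O_q(G)) \otimes \C[\Lambda_I]$. The bookkeeping requires tracking how the Peter-Weyl summands $\mathcal{V}_\mu^* \otimes \mathcal{V}_\mu$ occurring in $\O_q(G)_{\leq \lambda} \cdot \O_q(G)_{\leq \mu}$ with $\nu \leq \lambda + \mu$ but $[\nu]_I \neq [\lambda + \mu]_I$ get eliminated after quotienting, which is exactly the mechanism implementing the passage to the partial associated graded algebra. Once this identification is in place, the other two claims follow with little additional work.
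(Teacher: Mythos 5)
Your overall strategy — unwind $(\Orb_I)_q$ as the quotient-then-localization $\bigl(\O_q(\Vinb_G)/(z^{\alpha_j}:j\notin I)\bigr)[z^{-\alpha_i}:i\in I]$, compute degree-by-degree against the Peter--Weyl decomposition, and then trace the product — is exactly the ``standard Rees algebra formalism'' the paper invokes in one line, so you are on the same path as the paper, just with the details filled in. The degree-by-degree computation of the underlying $\Lambda$-graded \emph{vector space} is done carefully and correctly: the quotient in degree $\lambda$ kills exactly the $\phi(\cV_\mu^*\otimes\cV_\mu)$ with $\lambda-\mu\notin\sum_{i\in I}\Z_{\geq0}\alpha_i$, and localization relaxes this to $\lambda-\mu\in\Lambda_I$. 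The flatness and Poisson-quantization arguments are also fine and match the paper's (which simply points back to Proposition~\ref{prop:qvinberg}).

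The gap is in the step you flag as ``the obvious multiplication of $z^\nu$'s.'' The ``fine $\Lambda$-grading'' you propose on $\gr_I(\O_q(G))\otimes\C[\Lambda_I]$ — placing $\phi(\cV_\mu^*\otimes\cV_\mu)\otimes z^\rho$ in degree $\mu+\rho$ — is \emph{not} an algebra grading of the tensor product. The $\gr_I$-product of $\phi(\cV_\mu^*\otimes\cV_\mu)$ and $\phi(\cV_{\mu'}^*\otimes\cV_{\mu'})$ has components $p_\nu$ in $\phi(\cV_\nu^*\otimes\cV_\nu)$ for every $\nu\leq\mu+\mu'$ with $[\nu]_I=[\mu+\mu']_I$, not just $\nu=\mu+\mu'$; only the coset $[\nu]_I$ is preserved, not $\nu$ itself. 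So $(\phi(\cV_\mu^*\otimes\cV_\mu)\otimes z^\rho)\cdot(\phi(\cV_{\mu'}^*\otimes\cV_{\mu'})\otimes z^{\rho'})=\sum_\nu p_\nu\otimes z^{\rho+\rho'}$ has fine degrees $\nu+\rho+\rho'$, which strictly drop whenever $\nu<\mu+\mu'$. Correspondingly, the proposed identification $\phi(\cV_\mu^*\otimes\cV_\mu)z^\lambda\leftrightarrow\phi(\cV_\mu^*\otimes\cV_\mu)\otimes z^{\lambda-\mu}$ sends the $(\Orb_I)_q$-product $\sum_\nu p_\nu z^{\lambda+\lambda'}$ to $\sum_\nu p_\nu\otimes z^{\lambda+\lambda'-\nu}$, whereas the tensor-product multiplication yields $\sum_\nu p_\nu\otimes z^{\lambda+\lambda'-\mu-\mu'}$; the $z$-exponents disagree for every $\nu\neq\mu+\mu'$. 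One can already see this for $G=\SL_2$, $I=\Delta$: $(\Orb_\Delta)_q=\O_q(\GL_2)$ with $a,b,c,d$ in degree $1$ and $D_q$ in degree $2$, while $\gr_\Delta(\O_q(\SL_2))\otimes\C[z^{\pm2}]$ with your fine grading has no element of odd degree squaring into the even degree-$0$ part the way $ad$, $bc$ do in $\O_q(\GL_2)$. To be fair, the paper's own proof is guilty of the same imprecision — it asserts $(\Lambda/\Lambda_I)\times\Lambda_I=\Lambda$, which fails whenever $\Lambda/\Lambda_I$ has torsion (again $\SL_2$, $I=\Delta$, $\Lambda/\Lambda_I=\Z/2$) — so you have reproduced the intended argument, but a careful proof needs either to work at the level of $(\Lambda/\Lambda_I)\times\Lambda_I$-gradings, or to fix a splitting of $\Lambda_I\hookrightarrow\Lambda$ when one exists and otherwise account for the resulting cocycle twist.
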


\begin{proof} The first statement follows from standard Rees algebra formalism, noting that the algebra $\gr_I(\OqG) \ot \C[\Lambda_I]$ carries a natural grading by $(\Lambda/\Lambda_I) \times \Lambda_I = \Lambda$. The second and third statements follow from the corresponding statements about $\OqVinb$ and the Vinberg semigroup, see Proposition \ref{prop:qvinberg}.\end{proof}

\begin{definition} For $I \subseteq \Delta$, the category of quasicoherent sheaves on the quantum $\Orb_I$ is given by $$\QCoh_q\left(\Orb_I \right) = \uProj\left( (\Orb_I)_q \right),$$ or, equivalently, by $\uProj\left(\gr_I(\OqG) \ot \C[\Lambda_I] \right).$ \end{definition}

We conclude this section by explaining the relation between the full associated graded algebra $\gr_\emptyset (\OqG) = \gr(\OqG)$ and the quantum flag variety of Backelin and Kremnitzer \cite{BaKr}. The quantum coordinate algebra $\O_q(G)$ for $G$ is a comodule for the quantum coordinate algebra $\O_q(B)$ of a Borel subgroup $B$ of $G$, and the algebra structure on $\O_q(G)$ is compatible with the comodule structure. In other words, $\O_q(G)$ is an algebra object in the category of $\O_q(B)$-comodules. Hence we can consider the category $\cM_{B_q}(G_q)$ of $\O_q(G)$-modules in the category of $\O_q(B)$-comodules. This category is a $q$-deformation of the category of quasicoherent sheaves on the flag variety $G/B$, and is known as the quantum flag variety. A doubled version of this category is given by the category $\cM_{B_q \times B_q^-}(G_q \times G_q)$ of $\O_q(G \times G)$-modules within the category of $\O_q(B \times B^-)$-comodules. The following theorem is a consequence of work of Backelin and Kremnitzer \cite[Corollary 3.7]{BaKr}. 

\begin{theorem} There is an equivalence of categories $$\cM_{B_q \times B_q^-}(G_q \times G_q) \simeq \uProj(\gr(\O_q(G))).$$ \end{theorem}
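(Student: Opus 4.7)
My approach is to apply the Backelin--Kremnitzer equivalence to the product group $G \times G$ (rather than just $G$), and then translate the resulting invariant algebra into the language of the associated graded via Theorem \ref{thm:gradedinvariants}. The starting point is the core result of \cite{BaKr}: for a connected semisimple group $H$ with Borel $B_H$ and maximal unipotent $N_H$, there is an equivalence
\[ \cM_{B_{H,q}}(H_q) \;\simeq\; \uProj\!\left( \O_q(H)^{U_q(\mathfrak{n}_H)} \right), \]
where the right-hand side carries its natural grading by the character lattice of the maximal torus of $H$. I would first rederive, or simply quote, this equivalence in the form needed, paying attention to how the coaction of $\O_q(B_H)$ on objects translates into the weight-lattice grading of modules over the invariant algebra.

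The second step is to apply this verbatim to $H = G \times G$ with Borel $B \times B^-$ and unipotent radical $N \times N^-$. This yields
\[ \cM_{B_q \times B_q^-}(G_q \times G_q) \;\simeq\; \uProj\!\left( \O_q(G \times G)^{U_q(\mathfrak{n}) \ot U_q(\mathfrak{n}^-)} \right). \]
Using the quantum Peter--Weyl decomposition of $\O_q(G \times G)$ together with the fact that in each $\cV_\lambda^* \ot \cV_\lambda$ the $U_q(\mathfrak{n})$-invariants and $U_q(\mathfrak{n}^-)$-invariants are each one-dimensional (spanned by the highest and lowest weight vectors respectively), the invariant algebra decomposes explicitly as $\bigoplus_{\lambda, \mu \in \Lambda^+} \cV_\lambda^* \otimes \cV_\mu$ up to a natural identification of weights.

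The third and decisive step is to reconcile the grading. A priori the invariant algebra above is bigraded by $\Lambda \times \Lambda$, while $\gr(\OqG)$ is $\Lambda$-graded with $\lambda$-piece $\cV_\lambda^* \ot \cV_\lambda$. The key identification is Theorem \ref{thm:gradedinvariants} in the case $I = \emptyset$, which yields
\[ \gr(\OqG) \;\cong\; \O_q(G \times G)^{U_q(\mathfrak{n} \times \frakt \times \mathfrak{n}^-)} , \]
i.e. the subalgebra of the bigraded invariant algebra on which the two factors of $\Lambda$ are identified via the $\frakt$-invariance condition. I would then verify that the comodule structure defining $\cM_{B_q \times B_q^-}(G_q \times G_q)$ respects precisely this diagonal identification, so that only the diagonal copy of $\Lambda$ plays an essential role in forming $\uProj$, and the category is unchanged if we restrict to the $\frakt$-invariant subalgebra. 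Concretely, one checks that the ample shift functors (in the sense of Artin--Zhang) indexed by the diagonal $\Lambda^+$ generate the same localization as those indexed by $\Lambda^+ \times \Lambda^+$.

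The main obstacle is precisely this last step, the passage from the natural $\Lambda \times \Lambda$-grading provided by Backelin--Kremnitzer applied to the product group down to the single $\Lambda$-grading on $\gr(\OqG)$. Properly handling this reduction requires a careful compatibility check between the $\O_q(B \times B^-)$-coaction, the $U_q(\frakt)$-invariance condition from Theorem \ref{thm:gradedinvariants}, and the torsion subcategories used to form $\uProj$. Once this is established, the chain of equivalences assembles into the desired isomorphism of categories.
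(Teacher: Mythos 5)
The paper does not prove this theorem: the line preceding the statement asserts that it ``is a consequence of work of Backelin and Kremnitzer \cite[Corollary 3.7]{BaKr}'' and no further argument is given, so there is no proof in the text to compare against. Your strategy --- apply the single-group Backelin--Kremnitzer equivalence to $H = G \times G$ with Borel $B \times B^-$, then invoke Theorem \ref{thm:gradedinvariants} at $I = \emptyset$ to identify $\gr(\O_q(G))$ with the diagonal torus invariants of the resulting algebra --- assembles the right ingredients, and the Peter--Weyl computation $\O_q(G \times G)^{U_q(\mathfrak{n})\otimes U_q(\mathfrak{n}^-)} \cong \bigoplus_{\lambda,\mu \in \Lambda^+} \cV_\lambda^* \otimes \cV_\mu$ is correct.

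However, the reconciliation step that you flag as the main obstacle is a genuine gap, and the sketch you give does not close it. The phrase ``the ample shift functors indexed by the diagonal $\Lambda^+$ generate the same localization as those indexed by $\Lambda^+\times\Lambda^+$'' is not a well-posed comparison: $\uProj$ of the $\Lambda\times\Lambda$-graded algebra is a Serre quotient of a category of $\Lambda\times\Lambda$-graded modules, while $\uProj(\gr(\O_q(G)))$ is a Serre quotient of a category of $\Lambda$-graded modules, so the two localizations live over categories of different ambient type and cannot be compared merely by restricting a family of shift functors. What is actually required is a concrete comparison functor --- for instance $M \mapsto \bigoplus_{\lambda} M_{\lambda,\lambda}$, the restriction to diagonal graded pieces, landing in $\Lambda$-graded $\gr(\O_q(G))$-modules --- together with an Artin--Zhang-style ampleness argument (in the spirit of their Theorem~4.5, but for the restriction to a full-rank-deficient sublattice rather than a finite-index Veronese) showing that this functor descends to an equivalence on the $\uProj$ quotients. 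The cofinality of the diagonal cone in $\Lambda^+\times\Lambda^+$ makes this plausible, and it is consistent with the classical GIT picture, but it must be proved; in the paper's logic this reduction is exactly what BaKr's Corollary~3.7 is being trusted to supply, and a self-contained re-derivation would need to spell it out.
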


\subsection{The quantum orbits}\label{subsec:qorbits}

In this section, we provide a different description of the partial associated graded algebras $\gr_I( \OqG)$, and hence of the category of sheaves on each quantum orbit. 

Fix a subset $I \subseteq \Delta$. Throughout, we abbreviate $\frakl_I$ by $\frakl$ and $\fraku_I$ by $\fraku$. The map
$$ U_q(\fraku \times \mathfrak{l} \times \fraku^-) = U_q(\fraku)  \otimes U_q(\mathfrak{l})  \otimes U_q(\fraku^-) \rightarrow U_q(\g \times \g)$$
$$ x \otimes y \otimes z \rightarrow xy \otimes yz$$
is an injective morphism of algebras, and we henceforth identify  $U_q(\fraku \times \mathfrak{l} \times \mathfrak{u}^-)$ with its image in $U_q(\g \times \g)$. We consider the following two commuting actions of $U_q(\g \times \g) = \uqg \otimes \uqg$ on $\O_q(G \times G) = \O_q(G) \otimes \O_q(G)$, given in terms of matrix coefficients:
\begin{itemize}
\item The `internal' action: $(x_1 \otimes x_2) \triangleright (c_{f,v} \otimes c_{g,w}) = c_{f, x_1\cdot v} \otimes c_{x_2 \cdot g, w}.$
\item The `external' action: $(y_1 \otimes y_2) \triangleright (c_{f,v} \otimes c_{g,w}) = c_{y_1 \cdot f, v} \otimes c_{ g, y_2 \cdot w}.$
\end{itemize}
We consider the restriction of the internal action to the subalgebra $\uqulu$. The resulting space of invariants $\O_q(G \times G)^{\uqulu}$ carries an external action of $U_q(\g \times \g)$

\begin{theorem}\label{thm:gradedinvariants} For any $I \subseteq \Delta$, there is a $ U_q(\g \times \g)$-equivariant isomorphism of algebras $$\gr_I(\OqG)) = \O_q(G \times G)^{\uqulu}.$$ \end{theorem}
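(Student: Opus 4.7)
The plan is to identify both sides of the isomorphism as modules for the external $U_q(\g \times \g)$-action, construct an explicit equivariant map via a truncated coproduct, and verify multiplicativity. As a $\uqg$-bimodule, $\gr_I(\OqG)$ coincides with $\OqG$, since the filtering subspaces of Definition~\ref{def:Ifiltration} are bimodule-stable; hence Proposition~\ref{prop:oqgfiltration} gives the external isotypic decomposition $\bigoplus_{\sigma \in \Lambda^+} \cV_\sigma^* \otimes \cV_\sigma$, with each diagonal external representation appearing with multiplicity one. On the right, Peter-Weyl for $\O_q(G \times G)$ yields $\bigoplus_{\lambda, \mu} (\cV_\lambda^* \otimes \cV_\mu) \otimes (\cV_\lambda \otimes \cV_\mu^*)$, where the external action lives on the first tensor factor and the internal $\uqulu$-action on the second; taking invariants, the external $(\lambda, \mu)$-multiplicity becomes $\dim(\cV_\lambda \otimes \cV_\mu^*)^{\uqulu}$.

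The key dimension count is $\dim(\cV_\lambda \otimes \cV_\mu^*)^{\uqulu} = \delta_{\lambda, \mu}$, which I would deduce from a quantum version of Kostant's theorem for parabolic cohomology: $\cV_\lambda^{U_q(\fraku)}$ is irreducible as a $U_q(\mathfrak{l})$-module of highest weight $\lambda$, denoted $\mathcal{U}^\mathfrak{l}_\lambda$, and dually $(\cV_\mu^*)^{U_q(\fraku^-)} \cong (\mathcal{U}^\mathfrak{l}_\mu)^*$. The classical versions are standard, and the quantum case follows from semisimplicity of $\mathcal C_q(\mathfrak{l})$ combined with flatness of the deformation. Taking diagonal $U_q(\mathfrak{l})$-invariants of the tensor then reduces via Schur's lemma to $\Hom_{U_q(\mathfrak{l})}(\mathcal{U}^\mathfrak{l}_\mu, \mathcal{U}^\mathfrak{l}_\lambda) = \delta_{\lambda, \mu}\, \C$, matching the multiplicities on the left-hand side.

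To build the map explicitly, for each $\sigma \in \Lambda^+$ fix a basis $\{v_i^\sigma\}$ of $\mathcal{U}^\mathfrak{l}_\sigma \subset \cV_\sigma^{U_q(\fraku)}$ with dual basis $\{f_i^\sigma\}$ in $(\mathcal{U}^\mathfrak{l}_\sigma)^* \subset (\cV_\sigma^*)^{U_q(\fraku^-)}$, and for $c_{f,v} \in \cV_\sigma^* \otimes \cV_\sigma$ set $\Psi(c_{f,v}) = \sum_i c_{f, v_i^\sigma} \otimes c_{f_i^\sigma, v}$. This is the coproduct $\Delta(c_{f,v})$ truncated to the $\uqulu$-invariants: the $v_i^\sigma$ are killed by $U_q(\fraku)$, the $f_i^\sigma$ are killed by $U_q(\fraku^-)$, and $\sum_i v_i^\sigma \otimes f_i^\sigma$ is the canonical $U_q(\mathfrak{l})$-invariant element of $\mathcal{U}^\mathfrak{l}_\sigma \otimes (\mathcal{U}^\mathfrak{l}_\sigma)^*$. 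External $U_q(\g \times \g)$-equivariance follows from Lemma~\ref{lem:matrixcoeff}, and the multiplicity match combined with Schur's lemma forces $\Psi$ to be an isomorphism of $U_q(\g \times \g)$-bimodules.

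The main obstacle is verifying that $\Psi$ respects the algebra structure. By Lemma~\ref{lem:matrixcoeff}(\ref{lem:matrixcoeffbasics}), one computes $\Psi(c_{f,v}) \cdot \Psi(c_{f',v'}) = \sum_{i,j} c_{f \otimes f',\, v_i^\sigma \otimes v_j^{\sigma'}} \otimes c_{f_i^\sigma \otimes f_j^{\sigma'},\, v \otimes v'}$, whereas on the left $c_{f,v} \cdot c_{f', v'} = c_{f \otimes f', v \otimes v'}$ sits in $\OqG_{\leq \sigma + \sigma'}$ and descends in $\gr_I(\OqG)$ to its projection onto $\bigoplus_{[\nu]_I = [\sigma + \sigma']_I} \cV_\nu^* \otimes \cV_\nu$. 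Matching these expressions reduces to the following claim: under the decomposition $\cV_\sigma \otimes \cV_{\sigma'} \cong \bigoplus_\nu \cV_\nu^{\oplus n_\nu}$, the $U_q(\mathfrak{l})$-submodule $\mathcal{U}^\mathfrak{l}_\sigma \otimes \mathcal{U}^\mathfrak{l}_{\sigma'} \subset (\cV_\sigma \otimes \cV_{\sigma'})^{U_q(\fraku)}$ projects nontrivially exactly onto the $\mathcal{U}^\mathfrak{l}_\nu \cong \cV_\nu^{U_q(\fraku)}$ summands with $[\nu]_I = [\sigma + \sigma']_I$. Lemma~\ref{lem:qtensorproducts} gives $\nu \leq \sigma + \sigma'$ and hence $[\nu]_I \leq [\sigma + \sigma']_I$ in $\Lambda/\Lambda_I$; a careful $U_q(\mathfrak{l})$-highest-weight analysis, tracking the highest-weight vector of $\mathcal{U}^\mathfrak{l}_\sigma \otimes \mathcal{U}^\mathfrak{l}_{\sigma'}$ through each $\cV_\nu$ summand, shows that only the saturating $\nu$ contribute, which is exactly what the degree-$[\sigma+\sigma']_I$ projection on the left retains.
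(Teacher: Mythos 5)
Your proposal follows essentially the same route as the paper's proof: decompose both sides as $U_q(\g\times\g)$-bimodules via Peter--Weyl and the identification $(\cV_\lambda)_I = \cV_\lambda^{U_q(\fraku)}$ (the paper's Lemma~\ref{lem:vifacts}, which you phrase as quantum Kostant), define the map as a truncated coproduct through the canonical $U_q(\frakl)$-invariant of $(\cV_\lambda)_I\otimes(\cV_\lambda)_I^*$ (identical to the paper's $\Phi$), and reduce multiplicativity to the tensor-product decomposition of $(\cV_\sigma)_I\otimes(\cV_{\sigma'})_I$ into the $(\cV_\nu)_I$ with $\sigma+\sigma'-\nu\in\Lambda_I$, which is exactly the paper's Lemma~\ref{lem:subrepn}. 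The one place you wave your hands is precisely that sub-lemma and the basis-independence of the canonical element needed to finish the matching of sums, both of which the paper does carry out carefully.
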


The proof of this theorem requires some set-up. 

\begin{definition} Let $\cV$ be an irreducible module for $\uqg$ with highest weight vector $v_0$. We denote by $\cV_I$ the $U_q(\mathfrak l)$-submodule of $\cV$ generated by $v_0$, that is, $\cV_I = U_q(\mathfrak l) \cdot v_0$. \end{definition}

\begin{lemma}\label{lem:vifacts} We collect the following facts:
\begin{enumerate}
\item The space $\cV_I$ is the sum of weight subspaces of $\cV$ with weights that differ from the highest weight by $\Lambda_I$.
\item The subspace $\cV_I$ of $\cV$ coincides with the $U_q(\mathfrak u_I)$-invariants: $\cV_I = \cV^{U_q(\mathfrak u_I)}$. Consequently, $\cV_I$ is $U_q(\mathfrak p_I)$-stable. 
\item The dual $(\cV_I)^*$ can be identified with the $U_q(\mathfrak{u}_I^-)$-invariants in $\cV^*$: $(\cV_I)^* = (\cV^*)^{U_q(\mathfrak{u}_I^-)}$
\item\label{lem:lrepns} As modules for $U_q(\frakl)$,  $(\cV_\lambda)_I$ and $(\cV_\mu)_I$ are isomorphic if and only if $\lambda = \mu$. 
\end{enumerate}
\end{lemma}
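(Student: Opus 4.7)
The plan is to establish the four assertions in order, with the core being a parabolic highest-weight argument for (2) from which (3) and (4) follow.

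For (1), I would observe that every generator of $U_q(\frakl_I)$ has weight in $\Lambda_I$: the $K_i^{\pm 1}$ are weight zero, while $E_i$ and $F_i$ for $i \in I$ have weights $\pm \alpha_i \in \Lambda_I$. Applying any monomial in these generators to the weight-$\lambda$ vector $v_0$ therefore produces a weight vector of weight in $\lambda + \Lambda_I$, which shows that $\cV_I = U_q(\frakl_I) v_0$ decomposes as a sum of weight subspaces of $\cV$ whose weights lie in $\lambda + \Lambda_I$.

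For (2), I would prove both inclusions. Since $v_0$ is a $\uqg$-highest weight vector, $E_j v_0 = 0$ for every $j \in \Delta$, in particular for $j \notin I$, so $v_0 \in \cV^{U_q(\fraku_I)}$. The quantum analog of the classical relation $[\frakl_I, \fraku_I] \subseteq \fraku_I$ holds at the level of the PBW filtration of $\uqg$ (so that $U_q(\frakl_I) \cdot U_q(\fraku_I) = U_q(\fraku_I) \cdot U_q(\frakl_I)$), and this implies $\cV^{U_q(\fraku_I)}$ is $U_q(\frakl_I)$-stable. Hence $\cV_I \subseteq \cV^{U_q(\fraku_I)}$. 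For the reverse inclusion, I would decompose $\cV^{U_q(\fraku_I)}$ into irreducible $U_q(\frakl_I)$-submodules using semisimplicity of the type-$\mathbf 1$ category for the reductive quantum subalgebra $U_q(\frakl_I)$. A $U_q(\frakl_I)$-highest weight vector $w$ of any such summand is killed by $E_i$ for $i \in I$ (being $\frakl_I$-highest) and by $E_j$ for $j \notin I$ (by $\fraku_I$-invariance), so it is a $\uqg$-highest weight vector, and irreducibility of $\cV$ forces $w \in \C v_0$. Every summand therefore equals $U_q(\frakl_I) v_0 = \cV_I$, giving $\cV^{U_q(\fraku_I)} = \cV_I$. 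The $U_q(\mathfrak p_I)$-stability follows from $U_q(\mathfrak p_I) = U_q(\frakl_I) \cdot U_q(\fraku_I)$.

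For (3), I would apply the symmetric form of (2) to $\cV^*$, exchanging highest- and lowest-weight roles and $E \leftrightarrow F$, $\fraku_I \leftrightarrow \fraku_I^-$. Concretely, the functional $f_0 \in \cV^*$ defined by $f_0(v_0) = 1$ and $f_0|_{\cV_\mu} = 0$ for $\mu \neq \lambda$ has weight $-\lambda$ and is annihilated by $F_j$ for every $j$, playing the role of a lowest-weight vector in $\cV^*$. The same argument as in (2), mutatis mutandis, yields $(\cV^*)^{U_q(\fraku_I^-)} = U_q(\frakl_I) f_0$, an irreducible $U_q(\frakl_I)$-module of lowest weight $-\lambda$. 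The restriction map $\cV^* \to (\cV_I)^*$ is injective on $(\cV^*)^{U_q(\fraku_I^-)}$ because the pairing between $U_q(\frakl_I) f_0$ and $\cV_I = U_q(\frakl_I) v_0$ is non-degenerate, and surjective by dimension count, giving the desired identification $(\cV^*)^{U_q(\fraku_I^-)} \cong (\cV_I)^*$. For (4), both $(\cV_\lambda)_I$ and $(\cV_\mu)_I$ are irreducible type-$\mathbf 1$ $U_q(\frakl_I)$-modules of highest weights $\lambda$ and $\mu$ respectively (viewed as characters of $\frakt \subset \frakl_I$), and since such modules are classified by their highest weight, $(\cV_\lambda)_I \cong (\cV_\mu)_I$ if and only if $\lambda = \mu$.

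The main obstacle will be the reverse inclusion in (2), which combines two non-trivial ingredients: the quantum normalization $U_q(\frakl_I) U_q(\fraku_I) = U_q(\fraku_I) U_q(\frakl_I)$ to guarantee $U_q(\frakl_I)$-stability of the invariants, and the representation-theoretic step that isolates $\uqg$-highest-weight vectors inside the $U_q(\frakl_I)$-highest-weight vectors of $\cV^{U_q(\fraku_I)}$.
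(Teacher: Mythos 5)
Your route to item~(2) genuinely differs from the paper's: the paper derives the reverse inclusion $\cV^{U_q(\fraku_I)}\subseteq\cV_I$ as a formal consequence of the weight description in~(1), whereas you decompose $\cV^{U_q(\fraku_I)}$ into irreducible $U_q(\frakl_I)$-modules and show each $U_q(\frakl_I)$-highest-weight vector is already a $\uqg$-highest-weight vector. That argument is clean \emph{provided} $\cV^{U_q(\fraku_I)}$ really is a $U_q(\frakl_I)$-submodule, which you obtain from the asserted ``quantum normality'' $U_q(\frakl_I)\,U_q(\fraku_I)=U_q(\fraku_I)\,U_q(\frakl_I)$. This is exactly where there is a gap: with the definition used in this paper, $U_q(\fraku_I)$ is generated \emph{only} by the simple root vectors $E_j$ for $j\notin I$, and for that subalgebra the normality fails. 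In weight $\alpha_i+\alpha_j$ (with $i\in I$, $j\notin I$) the space $U_q(\fraku_I)\,U_q(\frakl_I)$ contains only scalar multiples of $E_jE_i$, and the quantum Serre relation does not make $E_iE_j$ proportional to $E_jE_i$; already classically $\langle e_j\rangle$ is not normalized by $\langle e_i,f_i,k_i^{\pm1}\rangle$ inside $U(\g)$, since $e_ie_j-e_je_i=e_{\alpha_i+\alpha_j}\neq 0$.

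In fact the trouble is more serious than the normality alone: with this small $U_q(\fraku_I)$, item~(2) itself is false. Take $\g=\fraksl_3$, $I=\{1\}$, and $\cV=\cV_{\alpha_1+\alpha_2}$ the adjoint representation. Then $U_q(\fraku_I)$ is generated by $E_2$ alone, and at $q=1$ one checks that $f_1$ (weight $-\alpha_1$) and the one-dimensional subspace of the Cartan orthogonal to $\alpha_2$ both lie in $\ker E_2$, although their weights do not lie in $\lambda+\Lambda_I$; so $\cV^{U_q(\fraku_I)}$ is four-dimensional while $\cV_I=\C v_0\oplus\C F_1v_0$ is two-dimensional. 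What is actually needed (and what makes your semisimplicity argument, and the paper's weight argument, go through) is the larger quantum unipotent $U_q(\fraku_I)$ spanned by Lusztig PBW root vectors $E_\beta$ for \emph{all} positive roots $\beta\notin\Lambda_I$: for that subalgebra $U_q(\frakl_I)$-normality is a standard consequence of the Levendorskii--Soibelman commutation relations, and the invariants $\cV^{U_q(\fraku_I)}$ coincide with the $\Lambda_I$-weight tail. So the fix is not a new argument but a corrected definition of $U_q(\fraku_I)$; once that is in place your approach is sound, and you should also supply the missing converse inclusion in~(1), namely that every weight vector of weight in $\lambda+\Lambda_I$ lies in $U_q(\frakl_I)v_0$, which follows from the PBW basis of $U_q(\mathfrak n^-)$ since a PBW monomial dropping the weight by an element of $\Lambda_I$ can involve only $F_\beta$ with $\beta\in\Lambda_I$.
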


\begin{proof} By definition, the space $\cV_I$ is obtained from $v_0$ by successive application of the operators $F_i$ for $i \in I$. Thus, the weight vectors that appear in $\cV_I$ are precisely those whose weights differ from the highest weight by elements of $\Lambda_I$, and the first claim is established. It is straightforward to use the PBW theorem for $\uqg$ to show that $\cV_I \subseteq \cV^{U_q(\fraku_I)}$. The opposite inclusion follows from the description of $\cV_I$ given in the first statement, thus proving the second claim. For the third claim, recall that, if $v_0$ is a highest weight vector in $\cV$, then the dual vector $v_0^*$ is a lowest weight vector in $\cV^*$.  Now, $\cV_I^* =(U_q(\mathfrak{l}) \cdot v_0)^* = U_q(\mathfrak{l}) \cdot v_0^* = (\cV^*)^{U_q(\mathfrak{u}_I^-)}.$ Another way to prove the third claim is to use the second claim and match weight subspaces. For the last claim, one considers the action of the $K_i$ in $U_q(\frakl)$ and the action of the quantized  enveloping algebra $U_q(\mathfrak{l/z(l))}$ of the semisimple Lie algebra $\frakl/\frak{z(l)}$, where $\frak{z(l)}$ denotes the center of $\frakl$. \end{proof}

\begin{lemma}\label{lem:viotvi} Suppose  $x \in \cV_\lambda \otimes \cV_\mu$ is a weight vector of weight $\nu$. Then $\lambda + \mu - \nu \in \Lambda_I$ if and only if $x \in  (\cV_\lambda)_I \otimes (\cV_\mu)_I$. \end{lemma}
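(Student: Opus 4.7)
The plan is to reduce both implications to a weight-space analysis, using the characterization from Lemma \ref{lem:vifacts}(\ref{lem:lrepns}) — or rather Lemma \ref{lem:vifacts}(1) — of $(\cV_\lambda)_I$ as the direct sum of those weight subspaces of $\cV_\lambda$ whose weights differ from $\lambda$ by an element of $\Lambda_I$. To begin, I would decompose the $\nu$-weight space of $\cV_\lambda \otimes \cV_\mu$ as
\[ (\cV_\lambda \otimes \cV_\mu)_\nu = \bigoplus_{\sigma + \tau = \nu} (\cV_\lambda)_\sigma \otimes (\cV_\mu)_\tau, \]
so that the weight vector $x$ has canonical components indexed by pairs $(\sigma, \tau)$.

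For the ``if'' direction, if $x$ lies in $(\cV_\lambda)_I \otimes (\cV_\mu)_I$ then only pairs $(\sigma, \tau)$ with $\lambda - \sigma \in \Lambda_I$ and $\mu - \tau \in \Lambda_I$ contribute, and summing these differences yields $\lambda + \mu - \nu \in \Lambda_I$ since $\Lambda_I$ is a subgroup of $\Lambda$. This direction requires essentially no work beyond unpacking definitions.

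The ``only if'' direction is the substantive step. Assuming $\lambda + \mu - \nu \in \Lambda_I$, I would argue that every pair $(\sigma, \tau)$ whose component in $x$ is nonzero already satisfies $\lambda - \sigma \in \Lambda_I$ and $\mu - \tau \in \Lambda_I$ individually. Since $\sigma$ and $\tau$ are weights of the irreducible highest-weight modules $\cV_\lambda$ and $\cV_\mu$, I can write $\lambda - \sigma = \sum_j c_j \alpha_j$ and $\mu - \tau = \sum_j d_j \alpha_j$ with $c_j, d_j \in \Z_{\geq 0}$. The hypothesis then reads
\[ \lambda + \mu - \nu = \sum_j (c_j + d_j)\alpha_j \in \Lambda_I, \]
and, because $\Lambda_I$ is by definition the $\Z$-span of $\{\alpha_j : j \in I\}$ and the simple roots are linearly independent in $\Lambda \otimes \Q$, this forces $c_j + d_j = 0$ for each $j \notin I$. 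The nonnegativity of $c_j$ and $d_j$ then forces $c_j = d_j = 0$ individually for $j \notin I$, placing $\sigma \in \lambda + \Lambda_I$ and $\tau \in \mu + \Lambda_I$, and hence $x \in (\cV_\lambda)_I \otimes (\cV_\mu)_I$.

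The main (mild) obstacle is precisely the passage from the single relation $\sum_j(c_j + d_j)\alpha_j \in \Lambda_I$ to the componentwise vanishings $c_j = d_j = 0$ for $j \notin I$. This is where the positivity of the $c_j$ and $d_j$ — guaranteed by the fact that the weights of $\cV_\lambda$ and $\cV_\mu$ lie below the respective highest weights in the standard partial order — is essential. Without positivity, the hypothesis $\lambda + \mu - \nu \in \Lambda_I$ would not suffice to control the individual pairs $(\sigma,\tau)$, and we would only recover our own assumption.
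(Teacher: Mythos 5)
Your proof is correct and follows essentially the same route as the paper: both decompose $x$ into weight components, write $\lambda - \sigma$ and $\mu - \tau$ as nonnegative integer combinations of simple roots, and use nonnegativity together with linear independence to conclude that the coefficients of $\alpha_j$ for $j \notin I$ vanish individually. The paper's version phrases the decomposition as a sum $x = \sum_j v_j \otimes w_j$ of pure tensors of weight vectors rather than via the direct-sum decomposition of the $\nu$-weight space, but this is the same argument.
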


\begin{proof} Write $x = \sum_j v_j \otimes w_j$. Without loss of generality, we can assume that $v_j$ and $w_j$ are weight vectors of weights $\text{wt}(v_j)$  and $\text{wt}(w_j)$. Then $\text{wt}(v_j) + \text{wt}(w_j) = \nu$ for all $j$. We can write $\lambda - \text{wt}(v_j) = \sum_{i \in \Delta} n_i \alpha_i$ and $\mu - \text{wt}(w_j) = \sum_{i \in \Delta} m_i \alpha_i$ for some integers $n_i, m_i \geq 0$. Now, 
$$ \sum_{i\in \Delta} (n_i + m_i) \alpha_i = \lambda + \mu - (\text{wt}(v_j) + \text{wt}(w_j)) = \lambda + \mu -\nu.$$ The far right hand side lies in $\Lambda_I$ if and only if  $n_i = m_i = 0$ for $i \notin I$, which in turn is true if and only if $v_j \in (\cV_\lambda)_I$ and $w_j \in (\cV_\mu)_I$ for all $j$.\end{proof}

We write  $\cV_\lambda \otimes \cV_\mu = \bigoplus_{\rho \leq \lambda + \mu} \cV_\rho^{\oplus N_{\lambda \mu}^\rho}$ for the decomposition of the tensor product $\cV_\lambda \otimes \cV_\mu$ into irreducibles. 

\begin{lemma}\label{lem:subrepn} We have:
\begin{enumerate}
 \item As a $U_q(\mathfrak l)$-module, the tensor product $(\cV_\lambda)_I \otimes (\cV_\mu)_I$ is isomorphic to the direct sum of  $(\cV_\nu)_I^{\oplus N_{\lambda \mu}^\nu}$ for weights $\nu$ that differ from $\lambda + \mu$ by $\Lambda_I$.
 
 \item The $\uqg$-submodule $W$ of $\cV_\lambda \otimes \cV_\mu$ generated by $(\cV_\lambda)_I \otimes (\cV_\mu)_I$ is isomorphic to the direct sum of  $\cV_\nu^{\oplus N_{\lambda \mu}^\nu}$ for weights $\nu$ that differ from $\lambda + \mu$ by $\Lambda_I$.
\end{enumerate} \end{lemma}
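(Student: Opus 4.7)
The plan is to combine the decomposition $\cV_\lambda \otimes \cV_\mu = \bigoplus_\rho \cV_\rho^{\oplus N_{\lambda\mu}^\rho}$ into $\uqg$-isotypic components with a weight-by-weight analysis, using Lemma \ref{lem:viotvi} to identify $(\cV_\lambda)_I \otimes (\cV_\mu)_I$ as a sum of weight subspaces.

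For part (1), I would first observe that Lemma \ref{lem:viotvi} identifies $(\cV_\lambda)_I \otimes (\cV_\mu)_I$ with the sum of weight subspaces in $\cV_\lambda \otimes \cV_\mu$ of weights $\nu$ with $\lambda+\mu-\nu \in \Lambda_I$. Since the decomposition into $\uqg$-isotypic components respects weights, I can compute the intersection of this sum with each summand $\cV_\rho$ separately. The weights in $\cV_\rho$ have the form $\rho - \sum_i n_i \alpha_i$ with $n_i \geq 0$, so the weight $\sigma$ of such a vector satisfies $\lambda+\mu-\sigma = (\lambda+\mu-\rho) + \sum_i n_i \alpha_i$, and this lies in $\Lambda_I$ if and only if both $\lambda+\mu-\rho\in\Lambda_I$ and $n_i=0$ for all $i\notin I$. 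Invoking Lemma \ref{lem:vifacts}(1), the second condition cuts out precisely $(\cV_\rho)_I$. Hence the intersection of $(\cV_\lambda)_I \otimes (\cV_\mu)_I$ with each isotypic copy of $\cV_\rho$ is either zero (if $\lambda+\mu-\rho\notin\Lambda_I$) or a copy of $(\cV_\rho)_I$; summing with multiplicities gives the desired isomorphism.

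For part (2), the key is to show that for each $\rho$ with $\lambda+\mu-\rho\in\Lambda_I$, the $\uqg$-submodule generated by $(\cV_\lambda)_I\otimes(\cV_\mu)_I$ contains all $N_{\lambda\mu}^\rho$ copies of $\cV_\rho$, while for $\rho$ with $\lambda+\mu-\rho\notin\Lambda_I$ it contains no copy of $\cV_\rho$. For the latter, the weight analysis in part (1) already shows the intersection with each such isotypic component is zero, and the $\uqg$-action preserves the isotypic decomposition. For the former, each highest weight vector of a $\cV_\rho$-summand has weight $\rho$, and since $\lambda+\mu-\rho\in\Lambda_I$ Lemma \ref{lem:viotvi} places every such vector in $(\cV_\lambda)_I\otimes(\cV_\mu)_I$; so all $N_{\lambda\mu}^\rho$ highest weight vectors lie in the generating set, and their $\uqg$-orbits exhaust the isotypic component by irreducibility of $\cV_\rho$.

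The step I expect to be most delicate is the multiplicity count in part (2): I need to verify that the full $N_{\lambda\mu}^\rho$-dimensional space of $\rho$-highest weight vectors in the $\cV_\rho$-isotypic component lies in $(\cV_\lambda)_I \otimes (\cV_\mu)_I$, not merely a single vector. This reduces to the observation that Lemma \ref{lem:viotvi} is a statement about \emph{all} weight vectors of the given weight, so the entire weight-$\rho$ subspace of $\cV_\lambda\otimes\cV_\mu$ (and in particular its intersection with any highest-weight subspace) sits inside $(\cV_\lambda)_I\otimes(\cV_\mu)_I$ as soon as $\lambda+\mu-\rho\in\Lambda_I$.
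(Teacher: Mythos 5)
Your proof is correct and rests on the same key input as the paper, namely the weight-space description of $(\cV_\lambda)_I \otimes (\cV_\mu)_I$ from Lemma \ref{lem:viotvi}, so the two arguments are close in spirit; the organization differs slightly, but productively. You compute directly, for each $\uqg$-isotypic copy of $\cV_\rho$ inside $\cV_\lambda \otimes \cV_\mu$, what its intersection with $(\cV_\lambda)_I \otimes (\cV_\mu)_I$ is (either zero or $(\cV_\rho)_I$), reading both parts of the lemma off the resulting isotypic bookkeeping and needing only that $(\cV_\lambda)_I \otimes (\cV_\mu)_I$ is a $U_q(\mathfrak{l})$-submodule. The paper instead runs a two-sided inclusion argument: the easy inclusion comes from placing highest weight vectors in $(\cV_\lambda)_I \otimes (\cV_\mu)_I$ via Lemma \ref{lem:viotvi}, and the reverse inclusion uses the stronger $U_q(\mathfrak{p}_I)$-stability of $(\cV_\lambda)_I \otimes (\cV_\mu)_I$ (Lemma \ref{lem:vifacts}.2) together with the observation that applying $F_i$ for $i \notin I$ strictly lowers the $\Lambda/\Lambda_I$-degree, so no new highest weight vectors can appear. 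Your version sidesteps the lowering-operator step and the appeal to $U_q(\mathfrak{p}_I)$-stability, at the cost of being a bit more explicit about how the weight-space characterization interacts with the isotypic decomposition; both are sound, and the content is the same.

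One small point worth making explicit if you write this up: the identification $\lambda+\mu-\sigma = (\lambda+\mu-\rho) + \sum n_i\alpha_i \in \Lambda_I \iff (\lambda+\mu-\rho \in \Lambda_I \text{ and } n_i = 0 \text{ for } i\notin I)$ uses that $\lambda+\mu-\rho$ is itself a nonnegative integer combination of the $\alpha_i$ (since $\rho \leq \lambda+\mu$ in the dominance order), so that no cancellation between the two pieces is possible. This is implicit in your argument but deserves a word, since without nonnegativity the equivalence would fail.
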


\begin{proof} For each part of the lemma, we identify each of the two modules in question as a subspace of $\cV_\lambda \otimes \cV_\mu$ and prove inclusions in both directions. 

Suppose $x \in \mathcal \cV_\lambda \ot \cV_\mu$ is the highest weight vector of a $\uqg$-submodule isomorphic to $\cV_\nu$, where $\lambda + \mu -\nu \in \Lambda_I$. By Lemma \ref{lem:viotvi}, $x$ lies in $(\cV_\lambda)_I \otimes (\cV_\mu)_I$. So the $U_q(\mathfrak l)$-submodule  generated by $x$ is contained in $(\cV_\lambda)_I \otimes (\cV_\mu)_I$. We conclude that $(\cV_\lambda)_I \otimes (\cV_\mu)_I$ contains, as a $U_q(\mathfrak l)$-submodule, the direct sum of  $(\cV_\nu)_I^{\oplus N_{\lambda \mu}^\nu}$ for weights $\nu$ such that $\lambda + \mu - \nu \in \Lambda_I$. It is an immediate consequence that the $\uqg$-submodule $W$ of $\cV_\lambda \otimes \cV_\mu$ generated by $(\cV_\lambda)_I \otimes (\cV_\mu)_I$  contains the direct sum of  $\cV_\nu^{\oplus N_{\lambda \mu}^\nu}$ for weights $\nu$ that $\lambda + \mu - \nu \in \Lambda_I$. We have established one inclusion for each of the two claims. 
 
For the opposite inclusions, observe that $(\cV_\lambda)_I \otimes (\cV_\mu)_I$ is  $U_q(\mathfrak p_I)$-stable, so any element in $W$ lying outside of $(\cV_\lambda)_I \otimes (\cV_\mu)_I$ is obtained by applying the action of $F_{i}$ for $i \in \Delta \setminus I$ to elements in $(\cV_\lambda)_I \otimes (\cV_\mu)_I$. Hence, the resulting elements have lower weights than those in $(\cV_\lambda)_I \otimes (\cV_\mu)_I$. It follows that all highest weight vectors of $W$  are contained in $(\cV_\lambda)_I \otimes (\cV_\mu)_I$, and this establishes the second inclusion of the second claim. The second inclusion of the first claim now follows from Lemma \ref{lem:viotvi}, which implies that any weight vector of  $(\cV_\lambda)_I \otimes (\cV_\mu)_I$ has weight $\nu$ satisfying $\lambda + \mu - \nu \in \Lambda_I$. \end{proof}

\newcommand{\uql}{U_q(\mathfrak{l})}

\begin{proof}[Proof of Theorem \ref{thm:gradedinvariants}] The invariants in $\OqG$ for the right (resp.\ left) action of $U_q(\mathfrak{u}_I)$ (resp.\ $U_q(\fraku^-)$) can be expressed as:
$$ \O_q(G)^{U_q(\fraku_I)}  = \bigoplus_{\lambda \in \Lambda^+} \cV_\lambda^* \otimes \cV_\lambda^{U_q(\fraku_I)} = \bigoplus_{\lambda \in \Lambda^+} \cV_\lambda^* \otimes (\cV_\lambda)_I.$$
$$ {}^{U_q(\fraku_I^-)} \O_q(G)  = \bigoplus_{\lambda \in \Lambda^+} (\cV_\lambda^*)^{U_q(\fraku_I^-)} \otimes \cV_\lambda = \bigoplus_{\lambda \in \Lambda^+} (\cV_\lambda)_I^* \otimes \cV_\lambda.$$ 
The residual right action of $U_q(\frakl)$ on $\O_q(G)^{U_q(\fraku_I)}$ is on the second factor, and the residual left action of $U_q(\frakl)$ on ${}^{U_q(\fraku_I^-)} \O_q(G)$ is on the first factor. Therefore, we have the following isomorphisms of $\uqg$-bimodules:
\begin{align*} \O_q(G \times G)^{\uqulu} 
&= \left[ \O_q(G)^{U_q(\fraku_I)} \otimes \O_q(G)^{U_q(\fraku_I^-)} \right]^{U_q(\frakl)} \\ &= \left( \left[ \bigoplus_{\lambda \in \Lambda^+} \cV_\lambda^* \otimes (\cV_\lambda)_I \right] \otimes \left[ \bigoplus_{\mu \in \Lambda^+}  (\cV_\mu)_I ^* \otimes \cV_\mu \right] \right)^{U_q(\frakl)} \\ 
&= \bigoplus_{\lambda,\mu \in \Lambda^+} \cV_\lambda^* \otimes \left[ (\cV_\lambda)_I \otimes (\cV_\mu)_I ^* \right]^{\uql} \otimes \cV_\mu \\ 
&= \bigoplus_{\lambda \in \Lambda^+} \cV_\lambda^* \otimes \left[ (\cV_\lambda)_I \otimes (\cV_\lambda)_I ^* \right]^{\uql}  \otimes \cV_\lambda   
\end{align*}
The last step follows from Lemma \ref{lem:vifacts}.\ref{lem:lrepns}. Define a homomorphism $$\Phi : \gr_I(\O_q(G)) \rightarrow \O_q(G \times G)^{\uqulu} $$ $$ c_{f,v}^{\cV_\lambda} \mapsto c_{f,e_i^{(\lambda)}}^{\cV_\lambda} \otimes c_{e^i_{(\lambda)},v}^{\cV_\lambda},$$
where $v \in \cV_\lambda$, $f \in \cV_\lambda^*$, and  $\{e_i^{(\lambda)}\}$ and $\{e^i_{(\lambda)}\}$ are dual bases of $(\cV_\lambda)_I$ and $(\cV_\lambda)_I^*$. (Here we adopt Einstein notation for summing over the index $i$.) Observe that, since $\cV_\lambda$ is an irreducible module of $\uql$, the space $[ (\cV_\lambda)_I \ot  (\cV_\lambda)_I ^* ]^{\uql}$ is the span of $e_i^{(\lambda)} \otimes e^i_{(\lambda)}$. Hence, $\Phi$ is well-defined and does not depend on the choice of basis and dual basis. It is clear that $\Phi$ is an isomorphism of $\uqg$-bimodules.

We show that $\Phi$ is a homomorphism of algebras. For weights $\lambda$, $\mu$, and $\nu$, fix a basis $\gamma_1^{(\nu)}$, $\dots$, $\gamma_{N_{\lambda, \mu}^\nu}^{(\nu)}$ of $\Hom_{\uqg}(\cV_\lambda \ot \cV_\mu, \cV_\nu)^* =  \Hom_{\uqg}(\cV_\lambda^* \ot \cV_\mu^*, \cV_\nu^*)$. Choose a dual basis $\sigma_i^{(\nu)}$ of $\Hom_{\uqg}(\cV_\lambda \ot \cV_\mu, \cV_\nu)$. Specifically, we have that $\sigma_i \circ \gamma_j^*$ is the identity on $\cV_\nu$ if $i$ and $j$ are equal and zero otherwise. Choose a basis $e_k^{(\nu)}$ of $(\cV_\nu)_I$. In what follows, we will consider copies of $\cV_\nu$ indexed by $i = 1, \dots, N_{\lambda,\mu}^\nu$. We write $e_k^{(\nu,i)}$ to indicate the vector $e_k^{(\nu)}$ of the $i$th copy of $\cV_\nu$, and similarly for the dual vectors. 

Let $\lambda, \mu \in \Lambda^+$, $v \in \cV_\lambda$, $f \in \cV_\lambda^*$, $z \in \cV_\mu$, and $g \in \cV_\mu^*$. By results in Section \ref{subsec:qfiltrations} and Lemma \ref{lem:subrepn}, the algebra structure on $\gr_I(\O_q(G))$ can be described as 
$$ c_{f,v}^{\cV_\lambda} \cdot  c_{g,z}^{\cV_\mu} =  \sum_{\nu \in \lambda + \mu + \Lambda_I} \sum_{i = 1}^{N_{\lambda, \mu}^\nu} c^{\cV_\nu}_{\gamma_{i}^{(\nu)}\left(f\otimes g\right), \sigma_i^{(\nu)} \left(v\otimes z\right)}.$$
Then 
\begin{align*} 
\Phi \left( c_{f,v}^{\cV_\lambda} \cdot  c_{g,z}^{\cV_\mu} \right) &=   \sum_{ \nu \in \lambda + \mu + \Lambda_I} \sum_{i = 1}^{N_{\lambda, \mu}^\nu} \Phi\left( c^{\cV_\nu}_{\gamma_{i}^{(\nu)}\left(f\otimes g\right), \sigma_i^{(\nu)} \left(v\otimes z\right)}\right) \\ 
&= \sum_{\nu \in \lambda + \mu + \Lambda_I} \sum_{i = 1}^{N_{\lambda, \mu}^\nu}  c^{\cV_\nu}_{\gamma_{i}^{(\nu)}(f\otimes g), e_k^{(\nu, i)}} \otimes c^{\cV_\nu}_{e^k_{(\nu, i)}, \sigma_i^{(\nu, i)} (v\otimes z)}  
\end{align*}
Taking the elements $ e_k^{(\nu, i)}$ over all $i = 1, \dots, N_{\lambda, \mu}^{\nu}$ and $k = 1, \dots, \dim((\cV_\nu)_I)$, we obtain a basis for $(\cV_\nu)_I^{\oplus N_{\lambda \mu}^\nu}$. By Lemma \ref{lem:subrepn}, the direct sum of  the spaces  $(\cV_\nu)_I^{\oplus N_{\lambda \mu}^\nu}$ for all weights $\nu$ in $\lambda + \mu + \Lambda_I$,  is isomorphic (as a $U_q(\mathfrak l)$-module) to the tensor product $(\cV_\lambda)_I \otimes (\cV_\mu)_I$. Thus, a different basis for this space is given by $e_j^{(\lambda)} \ot e_\ell^{(\mu)}$ where $j = 1, \dots, \dim((\cV_\lambda)_I)$, and $\ell = 1, \dots, \dim((\cV_\mu)_I)$. Consequently, the above expression is equal to:
\begin{align*}
c^{ \cV_\lambda \ot \cV_\mu }_{ f \otimes g, e_j^{(\lambda)} \ot e_\ell^{(\mu)} } \ot c_{e^k_{(\lambda)} \ot e^\ell_{(\mu)}, v\otimes z}^{\cV_\lambda \ot \cV_\mu  } =  \left(  c^{\cV_\lambda}_{f,e_k^{(\lambda)}} \otimes c^{\cV_\lambda}_{e^k_{(\lambda)}, v} \right) \cdot \left( c^{\cV_\mu}_{g,e_\ell^{(\mu)}} \otimes c^{\cV_\mu}_{e^\ell_{(\mu)}, z} \right) = \Phi \left( c_{f,v}^{\cV_\lambda} \right) \cdot \Phi \left( c_{g,z}^{\cV_\mu} \right), 
\end{align*}
where the multiplication in the last two expressions occurs in $\O_q(G \times G)^{\uqulu}$ as a subalgebra of $\O_q(G \times G)$. \end{proof} 

\begin{example} In the extreme cases we have $\gr_{\emptyset} (\O_q(G))$ $=$ $\O_q\left(\frac{G/N \times N^-\setminus G}{T}\right)$ and $\gr_\Delta (\O_q(G))$ $=$ $\O_q(G)$. The former is the quantized coordinate algebra of the asymptotic semigroup $\text{\rm As}G$ of $G$; the asymptotic semigroup is defined in \cite{popov, VinbergAsym}. \end{example}

\begin{rmk} We make the following remarks:
\begin{enumerate}
 \item Bezrukavnikov and Kazhdan observe the classical version of the result of Theorem \ref{thm:gradedinvariants} in \cite[Remark 2.9]{BezKazhdan}. 
 \item See Section 2.5 of \cite{EvensJones} for results analogous to Lemma \ref{lem:vifacts} in the classical case. 
\end{enumerate}\end{rmk}


\section{The case of $\SL_2$}\label{sec:qsl2}

In this section, we describe  the constructions of  Section \ref{sec:quantumwc} in the case when $G= \SL_2$. 

\subsection{The algebras $\O_q(\SL_2)$ and $U_q(\mathfrak{sl}_2)$}\label{subsec:uosl2}

Fix $q \in \C^\times$. We assume that $q$ is not a root of unity. The  following discussion is adapted in part from \cite{BrownGoodearl}.

\begin{definition} The quantum $2 \times 2$ matrix algebra is the bialgebra $\O_q(\Mat_2)$ generated over $\C$ by elements $a,b,c,d$ with relations 
$$ ab = qba \qquad ac = qca \qquad bc = cb  \qquad bd = qdb  \qquad$$  $$cd = qdc \qquad  ad-da = (q - q\inv)bc,$$ and with coalgebra structure given by $$ \Delta(a) = a \otimes a + b \otimes c, \quad \Delta(b) = a \otimes b + b \otimes d, \quad \Delta(c) = c \otimes a + d \otimes c, \quad \Delta(d) =  c\otimes b + d \otimes d,$$ $$ \epsilon(a) =\epsilon(d) = 1, \quad \epsilon(b) = \epsilon(c) =0.$$ 
The quantum determinant is the (central) element $D_ q := ad-qbc$ of $\O_q(\Mat_2)$. \end{definition}

\begin{definition}\label{def:oqsl2} The quantum coordinate algebra $\O_q(\SL_2)$ of $\SL_2$ is the quotient of $\O_q(\Mat_2)$ by the ideal generated by the central element $D_q -1$, and the quantum coordinate algebra $\O_q(\GL_2)$ of $\GL_2$ is the localization of $\O_q(\Mat_2)$ at the central element $D_q$:
$$\O_q(\SL_2) = \O_q(\Mat_2)/\langle D_q -1\rangle \qquad \qquad \O_q(\GL_2) = \O_q(\Mat_2)[D_q\inv].$$
\end{definition}

We use the same notation for elements of $\O_q(\Mat_2)$ and their images in $\O_q(\SL_2)$ and $\O_q(\GL_2)$. 

\begin{lemma} The bialgebra structure on $\O_q(\Mat_2)$ descends to a bialgebra structure on $\O_q(\SL_2)$ and $\O_q(\GL_2)$. Each of the latter bialgebras is a Hopf algebra with antipode is given by:
$$ S(a) = d \qquad S(b) = - q\inv b \qquad S(c) = -q c \qquad S(d) = a.$$ \end{lemma}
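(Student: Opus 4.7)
The plan is to prove the lemma in three steps: first show that the quantum determinant $D_q$ is central and grouplike, then use this to pass the bialgebra structure to the quotient and the localization, and finally verify the antipode axioms on generators.

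First, I would check that $D_q = ad - qbc$ is central in $\O_q(\Mat_2)$ by computing $D_q x - x D_q$ for $x \in \{a,b,c,d\}$, using the commutation relations. For instance, $aD_q = a(ad - qbc) = a^2 d - q a b c = a^2 d - q (q\inv ba)c = a^2d - b(qca) \cdot q\inv \cdot q= \cdots$; each such calculation reduces to the defining relations and the identity $ad - da = (q-q\inv)bc$. Next, I would verify that $D_q$ is grouplike, i.e.\ $\Delta(D_q) = D_q \otimes D_q$ and $\epsilon(D_q) = 1$. The counit identity is immediate from $\epsilon(a) = \epsilon(d) = 1$ and $\epsilon(b) = \epsilon(c) = 0$. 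The coproduct calculation expands
\[
\Delta(D_q) = \Delta(a)\Delta(d) - q\Delta(b)\Delta(c) = (a\ot a + b \ot c)(c\ot b + d \ot d) - q(a\ot b + b \ot d)(c\ot a + d \ot c),
\]
and after applying the quantum matrix relations in each tensor factor, the cross terms cancel and the expression collapses to $(ad-qbc) \ot (ad-qbc) = D_q \ot D_q$.

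Once $D_q$ is central and grouplike, $D_q - 1$ generates a biideal in $\O_q(\Mat_2)$ since $\Delta(D_q - 1) = (D_q - 1)\ot D_q + 1 \ot (D_q - 1)$ lies in $\langle D_q-1\rangle \ot \O_q(\Mat_2) + \O_q(\Mat_2) \ot \langle D_q-1\rangle$ and $\epsilon(D_q - 1) = 0$. This immediately gives the bialgebra structure on $\O_q(\SL_2)$. For $\O_q(\GL_2)$, centrality of $D_q$ ensures the localization $\O_q(\Mat_2)[D_q\inv]$ is well-defined as an algebra, and the grouplike condition lets us extend the coproduct and counit by $\Delta(D_q\inv) = D_q\inv \ot D_q\inv$ and $\epsilon(D_q\inv) = 1$, producing a bialgebra structure.

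To verify the Hopf algebra axioms, I would check the antipode conditions $m(S \ot \id)\Delta(x) = \epsilon(x) = m(\id \ot S)\Delta(x)$ on each generator $x \in \{a,b,c,d\}$, knowing these then extend multiplicatively. For example, $m(S\ot \id)\Delta(a) = S(a)a + S(b)c = da - q\inv bc$, and substituting $da = ad - (q-q\inv)bc$ gives $ad - qbc = D_q$, which equals $1$ in $\O_q(\SL_2)$ and $D_q$ in $\O_q(\GL_2)$. Similarly $S(c)a + S(d)c = -qca + ac = 0$, $S(a)b + S(b)d = db - q\inv bd = 0$, and $S(c)b + S(d)d = -qcb + ad = D_q$, and the dual identities $m(\id \ot S)\Delta(x)$ are checked the same way. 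For $\O_q(\GL_2)$ one additionally defines $S(D_q\inv) = D_q$; this is consistent because $S(D_q) = D_q\inv$ (forced by the grouplike property) and $S$ is an anti-homomorphism.

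The main obstacle is bookkeeping rather than conceptual: the commutation relations are close to but not quite those of a commutative ring, so each antipode verification requires careful use of the identity $ad - da = (q-q\inv)bc$ to move factors into the form $ad - qbc = D_q$. Everything else — centrality, grouplike-ness of $D_q$, descent of $(\Delta, \epsilon)$ to the quotient and the localization — is formal once those two facts about $D_q$ are in hand.
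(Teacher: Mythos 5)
The paper states this lemma without proof (it is cited from Brown--Goodearl), so there is no argument to compare against; your strategy --- centrality and grouplike-ness of $D_q$, formal descent of $(\Delta,\epsilon)$, then the antipode axioms on generators --- is the standard one, and your computations for $\O_q(\SL_2)$ are correct.

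There is, however, a genuine gap in the $\O_q(\GL_2)$ half. Your own calculation gives
$m(S\ot\id)\Delta(a) = da - q\inv bc = ad - qbc = D_q$,
and in $\O_q(\GL_2)$ this is $D_q \neq 1 = \epsilon(a)$. You record this (``$\dots$ which equals $1$ in $\O_q(\SL_2)$ and $D_q$ in $\O_q(\GL_2)$'') without noticing that it means the displayed formula $S(a)=d$, $S(b)=-q\inv b$, $S(c)=-qc$, $S(d)=a$ simply \emph{fails} to be an antipode on $\O_q(\GL_2)$. Your final sentence is also inconsistent with that formula: applying it as an anti-homomorphism, $S(D_q) = S(d)S(a) - qS(c)S(b) = ad - q(-qc)(-q\inv b) = ad - qbc = D_q$, not $D_q\inv$, so the stated antipode does not send the grouplike $D_q$ to its inverse in $\O_q(\GL_2)$. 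The antipode on $\O_q(\GL_2)$ must carry the quantum determinant: $S(a) = D_q\inv d$, $S(b) = -q\inv D_q\inv b$, $S(c) = -q D_q\inv c$, $S(d) = D_q\inv a$, after which the same computations collapse to $D_q\inv D_q = 1$ and one does get $S(D_q)=D_q\inv$. (The lemma as worded in the paper is itself slightly loose; the displayed formula is really the $\O_q(\SL_2)$ antipode. A complete proof should say so explicitly and supply the $D_q\inv$-twist for $\O_q(\GL_2)$.) A further minor omission is that you should verify $S$ respects the defining relations --- e.g.\ $S(b)S(a) = qS(a)S(b)$ reduces to $bd = qdb$ --- before invoking the antialgebra extension; this is routine but belongs in the argument.
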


\begin{definition} The quantized enveloping algebra $\Uqsl$ of $\fraksl_2$ is generated by elements $E,F,K^{\pm 1}$ subject to the relations 
$$KE = q^2 EK \qquad KF = q^{-2} FK \qquad [E,F] = \frac{K - K\inv}{q - q\inv}.$$
\end{definition}

We omit the definitions of the coalgebra structure and antipode on $\Uqsl$ from this summary (but see Proposition  \ref{prop:uqghopfstr}). For any non-negative integer $n$, there is a unique finite-dimensional irreducible $\Uqsl$-module of type 1, which we denote $\mathcal \cV_n$. 

\subsection{The Peter-Weyl filtration on $\O_q(\SL_2)$}\label{subsec:pwsl2} In the previous section, we gave a definition of $\Oqsl$ without reference to matrix coefficients; one can show that that definition coincides with the more general Defintion \ref{def:oqg}. More precisely:

\begin{theorem}\cite[Theorem I.7.16]{BrownGoodearl} The sub-Hopf algebra of $\Uqsl^{\circ}$ generated by the matrix coefficients of the representations $\mathcal \cV_n$ is isomorphic to the Hopf algebra $\Oqsl$ of Definition \ref{def:oqsl2}. Consequently, there is an isomorphism of $\Uqsl$-bimodules: $$\phi: \bigoplus_{n} \mathcal \cV_n \otimes \mathcal \cV_n^* \stackrel{\sim}{\longrightarrow} \O_q(SL_2).$$ \end{theorem}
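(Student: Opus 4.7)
The plan is to verify the Peter-Weyl theorem for $U_q(\mathfrak{sl}_2)$ by exhibiting explicit matrix coefficients of the fundamental representation, matching them with the generators of $\O_q(\SL_2)$ from Definition \ref{def:oqsl2}, and then using the tensor product/closure arguments from Section \ref{subsec:matrixcoef} to extend this to a full identification.

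First I would fix a standard basis $v_1, v_2$ of $\cV_1$ in which $K$ acts diagonally with eigenvalues $q, q^{-1}$, $E$ sends $v_2$ to $v_1$, and $F$ sends $v_1$ to $v_2$. Letting $f_1, f_2$ be the dual basis, I would define four elements $\alpha, \beta, \gamma, \delta \in U_q(\fraksl_2)^\circ$ by
\[
\alpha = c^{\cV_1}_{f_1, v_1}, \quad \beta = c^{\cV_1}_{f_1, v_2}, \quad \gamma = c^{\cV_1}_{f_2, v_1}, \quad \delta = c^{\cV_1}_{f_2, v_2}.
\]
Using Lemma \ref{lem:matrixcoeff}(\ref{lem:matrixcoeffbasics}), products of these four elements are matrix coefficients of $\cV_1 \otimes \cV_1$, and I would evaluate the relevant commutation relations on the PBW basis of $\Uqsl$ acting on tensor powers of $\cV_1$. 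For example, $\alpha \beta$ and $\beta \alpha$ are matrix coefficients of $\cV_1 \otimes \cV_1$ for specific choices of dual basis vectors; a direct computation using the coproducts of $E, F, K$ shows $\alpha \beta = q \beta \alpha$, and similarly all six quantum matrix relations hold. The coproduct formulas of Definition \ref{def:oqsl2} follow from the identity $\Delta(c^{\cV_1}_{f_i, v_j}) = \sum_k c^{\cV_1}_{f_i, v_k} \otimes c^{\cV_1}_{f_k, v_j}$ in Lemma \ref{lem:matrixcoeff}. Finally, the quantum determinant $\alpha \delta - q \beta \gamma$ is the matrix coefficient of the $\Uqsl$-invariant pair $(f_1 \wedge_q f_2, v_1 \wedge_q v_2)$ inside $\cV_1 \otimes \cV_1$; evaluating on $1 \in \Uqsl$ gives $1$, and since this element is central and group-like it must equal the unit.

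These checks produce a surjective bialgebra homomorphism $\Phi \colon \Oqsl \to B$ from the algebra of Definition \ref{def:oqsl2} onto the subalgebra $B$ of $\Uqsl^\circ$ generated by $\alpha, \beta, \gamma, \delta$. I would then argue that $B$ coincides with the subalgebra $A$ generated by the matrix coefficients of all the $\cV_n$: since $\cV_n$ is a direct summand of $\cV_1^{\otimes n}$, Lemma \ref{lem:family} and Lemma \ref{lem:matrixcoeff}(\ref{lem:subbialgebra}, \ref{lem:matrixcoeffmorphism}) imply that every matrix coefficient of $\cV_n$ is a $\C$-linear combination of products of the matrix coefficients of $\cV_1$, so $A \subseteq B$; the reverse inclusion is immediate. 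Because $\{\cV_n\}$ is closed under duals, $A$ is a sub-Hopf algebra, hence so is $B$.

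For injectivity of $\Phi$, I would compare bases. The abstract algebra $\Oqsl$ has a well-known PBW-type basis given by the images of the monomials $\alpha^i \beta^j \gamma^k$ for $i, j, k \geq 0$ together with $\beta^j \gamma^k \delta^\ell$ for $\ell \geq 1$. On the $\Uqsl^\circ$ side, Lemma \ref{lem:matrixcoeff}(\ref{lem:subbialgebra}) gives a direct sum decomposition $A \cong \bigoplus_n \cV_n^* \otimes \cV_n$ as a $\Uqsl$-bimodule, so in particular $\dim A_{n,m} = \delta_{n,m}(n+1)^2$ for the bi-weight decomposition under the left and right action of $K$. I would compute the analogous bi-weight Hilbert series on the abstract side using the PBW basis and the quantum determinant relation and observe they agree; this forces $\Phi$ to be a bijection. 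Once $\Phi$ is an isomorphism, the Peter-Weyl decomposition is read off directly from Lemma \ref{lem:matrixcoeff}(\ref{lem:subbialgebra}) applied to the family of $\cV_n$.

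The main obstacle is the bi-weight Hilbert series comparison: on the abstract side one must prove the PBW basis is genuinely a basis (i.e.\ no hidden relations), which in turn requires the standard diamond-lemma argument for $\Oqsl$. Fortunately this is classical and available in the cited \cite{BrownGoodearl}, so in practice the argument is straightforward bookkeeping rather than a genuinely difficult step.
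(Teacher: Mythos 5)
The paper does not prove this statement: it cites \cite[Theorem I.7.16]{BrownGoodearl} and takes the identification for granted. So there is no internal argument to compare against, and the only fair assessment is of your proposal on its own merits.

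Your overall skeleton is reasonable and is presumably close to what the cited reference does: realize the generators $a,b,c,d$ of $\Oqsl$ as matrix coefficients of the fundamental module $\cV_1$, verify the quantum matrix relations and coalgebra formulas via Lemma~\ref{lem:matrixcoeff}, show that products of $\cV_1$-coefficients sweep out all coefficients of the $\cV_n$ because $\cV_n \subseteq \cV_1^{\ot n}$ (using Lemma~\ref{lem:matrixcoeff}.\ref{lem:matrixcoeffmorphism}), and then argue bijectivity by a dimension count. However, two steps as written are incorrect and need repair.

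First, the justification for $\alpha\delta - q\beta\gamma = 1$ by ``central and group-like so must equal the unit'' does not hold in general: $D_q$ itself is central and group-like in $\O_q(\Mat_2)$ and in $\O_q(\GL_2)$, yet equals $1$ in neither. The correct argument is the one your previous sentence almost gives: if $w$ spans the trivial $\Uqsl$-submodule of $\cV_1 \ot \cV_1$ and $f$ is the dual invariant functional normalized so $f(w) = 1$, then $c^{\cV_1\ot\cV_1}_{f,w}(h) = f(\epsilon(h)w) = \epsilon(h)$ for all $h$, so this matrix coefficient \emph{is} the counit of $\Uqsl$, i.e.\ the unit of $\Uqsl^\circ$. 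No appeal to group-likeness is needed.

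Second, and more seriously, the injectivity step does not work as stated. The claimed dimension $\dim A_{n,m} = \delta_{n,m}(n+1)^2$ ``for the bi-weight decomposition under the left and right action of $K$'' is false: the bi-$K$-weight spaces of $A = \bigoplus_n \cV_n^*\ot\cV_n$ are \emph{infinite}-dimensional, since for any fixed pair of $K$-weights $(a,b)$ infinitely many $\cV_n$ contribute. (You appear to be conflating the $K$-weight bigrading with the Peter--Weyl block decomposition; the block $\cV_n^*\ot\cV_n$ indeed has dimension $(n+1)^2$, but that is a grading by a single index $n$, and identifying it with a bigrading on the abstract $\Oqsl$ is circular, since it presupposes the bimodule structure you are trying to establish.) The dimension comparison must instead run along the $\Z$-filtration: on the matrix-coefficient side $\dim A_{\leq n} = \sum_{m\leq n,\, m\equiv n (2)}(m+1)^2$ by Lemma~\ref{lem:matrixcoeff}.\ref{lem:subbialgebra}; on the abstract side the filtered pieces of $\Oqsl$ by word-length are finite-dimensional, computable via the diamond lemma for $\O_q(\Mat_2)$ together with the $D_q = 1$ relation; and one checks $\Phi$ is filtered (products of $n$ coefficients of $\cV_1$ land in coefficients of $\cV_1^{\ot n}$, hence in $\bigoplus_{m\leq n}\cV_m^*\ot\cV_m$) and filtered-surjective (because coefficients of $\cV_n$ come from coefficients of $\cV_1^{\ot n}$ via Lemma~\ref{lem:matrixcoeff}.\ref{lem:matrixcoeffmorphism}). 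This repaired comparison is finite in each degree and does force $\Phi$ to be an isomorphism.
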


\begin{definition}\label{def:sl2filtration} Endow $\Z$ with the dominance ordering. Define subspaces of $\O_q(\SL_2)$ by 
$$\O_q(\SL_2)_{\leq n} = \phi\left( \bigoplus_{m \leq n} \mathcal \cV_m \otimes \mathcal \cV_m^*\right).$$ \end{definition}

Thus, we have 	
$$ \O_q(\SL_2)_{\leq 0} \subseteq \O_q(\SL_2)_{\leq 2} \subseteq \O_q(\SL_2)_{\leq 4} \subseteq \cdots$$
$$ \O_q(\SL_2)_{\leq 1} \subseteq \O_q(\SL_2)_{\leq 3} \subseteq \O_q(\SL_2)_{\leq 5} \subseteq \cdots$$
and no inclusions between the two strings. The following lemma is a straightforward verification; the proof of each part is parallel to its classical analogue.

\begin{lemma}  We have the following:
\begin{enumerate}

\item  The spaces of Definition \ref{def:sl2filtration} define a filtration on $\O_q(\SL_2)$:
$$ \mu : \O_q(\SL_2)_{\leq n} \otimes \O_q(\SL_2)_{\leq m} \rightarrow \O_q(\SL_2)_{\leq n+m}.$$

\item The coproduct $\Delta$ restricts to a map $$\Delta : \O_q(\SL_2)_{\leq n} \rightarrow \O_q(\SL_2)_{\leq n} \otimes \O_q(\SL_2)_{\leq n}.$$ 

\item\label{lem:qleqn} For $n \in \Z$, consider the subspace of the free algebra $\C\langle a,b,c,d\rangle$ spanned by monomials words of length $k$ where $k \leq n$ (under the usual partial order on $\Z$) and $k \equiv n \mod 2$. The image of this subspace under the quotient map $\C\langle a,b,c,d \rangle \rightarrow \O_q(\SL_2)$ is precisely  $\O_q(\SL_2)_{\leq n}$. 
\end{enumerate}
 \end{lemma}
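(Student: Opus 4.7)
The plan is to treat the three parts in order, with (1) and (2) being direct corollaries of earlier general results and (3) containing the substantive content.

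For (1), the claim is the specialization of Proposition \ref{prop:oqgfiltration} to $G = \SL_2$. The dominance order on $\Lambda_{\SL_2} = \Z$ places $m \leq n$ precisely when $n - m$ is a nonnegative multiple of $\alpha_1 = 2$, so the subspaces in Definition \ref{def:sl2filtration} coincide with the filtration pieces $\O_q(\SL_2)_{\leq n}$ of that proposition. For (2), I invoke the coproduct formula $\Delta(c_{f,v}^{\cV_m}) = \sum_i c_{f,v_i}^{\cV_m} \otimes c_{f_i,v}^{\cV_m}$ from Lemma \ref{lem:matrixcoeff}.\ref{lem:matrixcoeffbasics}: each summand is a matrix coefficient of $\cV_m$, hence lies in $\O_q(\SL_2)_{\leq m} \subseteq \O_q(\SL_2)_{\leq n}$ whenever $m \leq n$ and $m \equiv n \bmod 2$, and the conclusion follows by linearity.

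The heart of the argument is part (3), where I identify $a,b,c,d$ with the matrix coefficients $c_{f_i,v_j}^{\cV_1}$ relative to a basis $\{v_1, v_2\}$ of $\cV_1$ and its dual basis $\{f_1, f_2\}$. By the multiplicativity property in Lemma \ref{lem:matrixcoeff}.\ref{lem:matrixcoeffbasics}, any monomial of length $k$ in the generators maps under the quotient $\C\langle a,b,c,d\rangle \twoheadrightarrow \O_q(\SL_2)$ to a matrix coefficient of $\cV_1^{\otimes k}$. For the forward inclusion, I apply the quantum Clebsch-Gordan decomposition for $\Uqsl$ at $q$ not a root of unity, namely
\[ \cV_1^{\otimes k} \;\cong\; \bigoplus_{\substack{0 \leq m \leq k \\ m \equiv k \,(\mathrm{mod}\,2)}} \cV_m^{\oplus d_{m,k}}, \]
which shows that matrix coefficients of $\cV_1^{\otimes k}$ lie in $\phi\left(\bigoplus_{m \leq k,\, m \equiv k\,(2)} \cV_m \otimes \cV_m^*\right) \subseteq \O_q(\SL_2)_{\leq n}$ whenever $k \leq n$ and $k \equiv n \bmod 2$.

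For the reverse inclusion, I take any spanning matrix coefficient $c_{f,v}^{\cV_m}$ of $\O_q(\SL_2)_{\leq n}$, where $0 \leq m \leq n$ and $m \equiv n \bmod 2$. Since $\cV_m$ is a direct summand of $\cV_1^{\otimes m}$ by the same Clebsch-Gordan decomposition, I fix a $\Uqsl$-equivariant surjection $\pi: \cV_1^{\otimes m} \twoheadrightarrow \cV_m$ and lift $v = \pi(w)$ for some $w \in \cV_1^{\otimes m}$. Then Lemma \ref{lem:matrixcoeff}.\ref{lem:matrixcoeffmorphism} gives $c_{f,v}^{\cV_m} = c_{\pi^*(f), w}^{\cV_1^{\otimes m}}$, which by multiplicativity is a linear combination of images of length-$m$ monomials in $a,b,c,d$, with $m \leq n$ and $m \equiv n \bmod 2$ as required. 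The only non-routine input is the quantum Clebsch-Gordan decomposition, which is the expected obstacle; it is however a standard fact for $\Uqsl$ at generic $q$, and the rest of the argument is an exact quantum transcription of the classical reasoning behind Lemma \ref{lem:leqn}.
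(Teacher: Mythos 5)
Your proof is correct; the paper gives no details here (it simply declares the verification ``straightforward'' and ``parallel to its classical analogue'' Lemma \ref{lem:leqn}), and your elaboration via the Peter--Weyl isomorphism, the multiplicativity of matrix coefficients, and the quantum Clebsch--Gordan decomposition $\cV_1^{\otimes k} \cong \bigoplus_{m \leq k,\, m \equiv k\ (2)} \cV_m^{\oplus d_{m,k}}$ is exactly the expected argument. In particular, the reverse inclusion in part (3) via Lemma \ref{lem:matrixcoeff}.\ref{lem:matrixcoeffmorphism} applied to an equivariant surjection $\cV_1^{\otimes m} \twoheadrightarrow \cV_m$ is the natural mechanism, and the same reasoning specializes at $q=1$ to the classical Lemma \ref{lem:leqn}.
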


\begin{definition} We define the following algebras:
\begin{itemize}
 \item Let $\Sym_q^k( \C^2)$ denote the $k$th graded piece of the algebra $\C\langle x, y \rangle / \langle xy - qyx\rangle$, and set $$\P_q^1 \times \P_q^1 = \bigoplus_{k} \Sym_q^k( \C^2) \otimes \Sym_q^k( \C^2).$$
 
 \item The associated graded algebra of $\O_q(\SL_2)$ is defined as the $\Z$-graded algebra $$\text{\rm gr}(\O_q(\SL_2)) = \bigoplus_{n} \O_q(\SL_2)_{\leq n}/ \O_q(\SL_2)_{< n}.$$  
\end{itemize}
\end{definition}

Setting $q=1$ in the definition of $\P^1_q \times \P^1_q$, we obtain the homogeneous coordinate ring of $\P^1 \times \P^1$. Note that $\P^1$ is the flag variety for $\SL_2$.

\begin{prop}\label{prop:qassgrad} The associated graded $\gr(\Oqsl)$ is isomorphic to $\P^1_q \times \P^1_q$. \end{prop}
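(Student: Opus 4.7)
The plan is to parallel the classical argument given in the proof of Proposition \ref{prop:assgrad}, with appropriate quantum modifications. First, I would identify $\gr(\Oqsl)$ with the quotient $\O_q(\Mat_2)/(D_q)$ as $\Z_{\geq 0}$-graded algebras. The algebra $\O_q(\Mat_2)$ carries a natural $\Z_{\geq 0}$-grading by word length in $a, b, c, d$, the quantum determinant $D_q = ad - qbc$ is homogeneous of degree $2$, and $\Oqsl = \O_q(\Mat_2)/(D_q - 1)$. The quantum analogue of Lemma \ref{lem:leqn} (listed as part (3) in the lemma immediately preceding this proposition) says that $\Oqsl_{\leq n}$ is the image of the span of monomials of length $k \leq n$ with $k \equiv n \pmod 2$. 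From this one deduces, exactly as in the standard Rees/filtered argument, that $\gr(\Oqsl) \cong \O_q(\Mat_2)/(D_q)$, the parity constraint corresponding to $D_q$ being homogeneous of degree $2$.

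Next, I would construct an explicit homomorphism
\[ \Psi : \O_q(\Mat_2)/(D_q) \longrightarrow \P^1_q \times \P^1_q \]
by $a \mapsto x \otimes w$, $b \mapsto x \otimes z$, $c \mapsto y \otimes w$, $d \mapsto y \otimes z$, where $(x,y)$ and $(w,z)$ generate the two tensor factors of $\C_q[x,y] \otimes \C_q[w,z]$ (each satisfying $xy = qyx$ and $wz = qzw$), and elements of different factors commute. A direct calculation verifies the six defining relations of $\O_q(\Mat_2)$; for example, $ac = xy \otimes w^2$ and $qca = qyx \otimes w^2 = xy \otimes w^2$, while $ad - da = (1 - q^{-2})xy \otimes wz = (q - q^{-1})bc$. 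The same computation shows $\Psi(D_q) = 0$.

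Finally, I would show $\Psi$ is an isomorphism by a Hilbert series comparison. Surjectivity in degree $n$ follows because taking products $a^p b^q c^r d^s$ with $p+q+r+s = n$ produces every monomial $x^{n-j}y^j \otimes w^{n-\ell} z^\ell$ (with $0 \leq j, \ell \leq n$) up to a nonzero scalar, and these form a basis of $\Sym^n_q(\C^2) \otimes \Sym^n_q(\C^2)$, which has dimension $(n+1)^2$. On the other side, flatness of the quantum deformation (i.e.\ $\O_q(\Mat_2)$ has the same Hilbert series as $\C[a,b,c,d]$, via a standard diamond lemma / PBW argument) gives $\dim \O_q(\Mat_2)_n = \binom{n+3}{3}$, and since $D_q$ is a nonzerodivisor, $\dim (\O_q(\Mat_2)/(D_q))_n = \binom{n+3}{3} - \binom{n+1}{3} = (n+1)^2$.

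The main obstacle I anticipate is the bookkeeping for step one, in particular verifying that the two-component nature of the dominance-ordered filtration (even and odd weights) translates correctly into the degree-$2$ homogeneity of $D_q$ when passing to the associated graded; once this is in hand, the remaining steps are essentially routine quantum analogues of the classical verifications, with the quantum determinant relation doing exactly the job that $ad - bc$ did classically.
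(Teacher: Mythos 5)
Your proposal is correct and follows essentially the same route as the paper: identify $\gr(\Oqsl)$ with $\O_q(\Mat_2)/(D_q)$ via the filtration lemma and then map onto $\P^1_q \times \P^1_q$ by sending $a,b,c,d$ to the degree-$(1,1)$ monomials. The paper's proof is a terse sketch that simply asserts both isomorphisms; you fill in the same steps with explicit relation checks and a Hilbert-series count (using flatness of $\O_q(\Mat_2)$ and centrality/nonzerodivisorhood of $D_q$), all of which are correct.
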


\begin{proof} Observe that there are isomorphisms
$$\text{\rm gr}(\Oqsl)) \simeq \O_q(\Mat_2)/(ad-qbc) \simeq \bigoplus_{k } \Sym_q^k( \C^2) \otimes \Sym_q^k( \C^2),$$ where the second isomorphism is given by  $$a \mapsto x \otimes u \qquad b \mapsto x \otimes w \qquad c \mapsto y \otimes u \qquad d \mapsto y \otimes w.$$ Here $(x,y)$ and $(u,w)$ denote the coordinates on the first and second copies of $\C^2$, respectively. \end{proof}

\subsection{The quantum Vinberg semigroup}\label{subsec:vinbergsl2}

\begin{definition}  The quantum Vinberg semigroup $\O_q(\VinbSL)$ for $\SL_2$ is defined  as the Rees algebra for $\Oqsl$ with the Peter-Weyl filtration. Explicitly, letting $z$ be a formal variable, $\O_q(\VinbSL)$ is the following graded subalgebra of $\O_q(\SL_2)[z]$: $$\O_q(\VinbSL)= \bigoplus_{n} \O_q(\SL_2)_{\leq n} z^n. $$ \end{definition}

\begin{prop} There is a well-defined bialgebra structure on $\VinbqSL$ given by $$\Delta : \O_q(\SL_2)_{\leq n} z^n \rightarrow  \O_q(\SL_2)_{\leq n} z^n  \otimes  \O_q(\SL_2)_{\leq n} z^n $$ $$ \Delta(fz^n) = \Delta_{\SL_2}(f) \cdot (z^n \otimes z^n),$$ and $\epsilon (fz^n) = \epsilon_{\SL_2}(f),$ where $\Delta_{\SL_2} $ and $\epsilon_{\SL_2}$ denote the coproduct and counit on $\O_q(\SL_2)$.  \end{prop}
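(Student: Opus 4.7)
The plan is to realize this proposition as the $\SL_2$ specialization of Lemma \ref{lem:Rees}, which was already established in the general case. Since the excerpt defines $\O_q(\VinbSL)$ as a $\Z$-graded subalgebra of $\Oqsl \otimes \C[z]$, I would first verify that the proposed maps $\Delta$ and $\epsilon$ are well-defined as $\C$-linear maps on the graded pieces, then check the coalgebra axioms, and finally the bialgebra compatibility with multiplication.

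First I would show that the formula $\Delta(fz^n) = \Delta_{\SL_2}(f) \cdot (z^n \ot z^n)$ takes values in $\O_q(\SL_2)_{\leq n} z^n \otimes \O_q(\SL_2)_{\leq n} z^n$. This uses the second item of the preceding lemma, which asserts that $\Delta_{\SL_2}$ restricts to a map $\O_q(\SL_2)_{\leq n} \rightarrow \O_q(\SL_2)_{\leq n} \ot \O_q(\SL_2)_{\leq n}$. That fact in turn follows from Lemma \ref{lem:matrixcoeff}(\ref{lem:matrixcoeffbasics}): the coproduct of a matrix coefficient for $\cV_m$ is a sum of tensor products of matrix coefficients for $\cV_m$, and the filtration is spanned by matrix coefficients for $\cV_m$ with $m \leq n$. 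Well-definedness of $\epsilon(fz^n) = \epsilon_{\SL_2}(f)$ is immediate.

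Next I would verify the coalgebra axioms. Coassociativity $(\Delta \ot \id) \circ \Delta = (\id \ot \Delta) \circ \Delta$ on $\O_q(\SL_2)_{\leq n} z^n$ reduces to coassociativity on $\Oqsl$, since both sides differ from the corresponding expressions in $\Oqsl$ only by the formal factors $z^n \ot z^n \ot z^n$. The counit axioms $(\epsilon \ot \id)\Delta = \id = (\id \ot \epsilon)\Delta$ similarly reduce to the counit axioms for $\Oqsl$, using $\epsilon(z^n) = 1$ (which is the content of the formula $\epsilon(fz^n) = \epsilon_{\SL_2}(f) \cdot 1$).

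For the bialgebra compatibility, I would check that $\Delta$ and $\epsilon$ are algebra homomorphisms. Since the multiplication in $\VinbqSL$ is induced from that on $\Oqsl \ot \C[z]$, the relation $(fz^n)(gz^m) = (fg)z^{n+m}$ holds with $fg \in \O_q(\SL_2)_{\leq n+m}$ by the filtration property. Then
\[
\Delta((fz^n)(gz^m)) = \Delta_{\SL_2}(fg)\cdot (z^{n+m} \ot z^{n+m}) = \Delta_{\SL_2}(f)\Delta_{\SL_2}(g) \cdot (z^{n+m} \ot z^{n+m}),
\]
which equals $\Delta(fz^n) \Delta(gz^m)$ after regrouping $z$-factors (they are central in $\Oqsl \ot \C[z]$, so there are no signs or $q$-powers to track). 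The compatibility for $\epsilon$ is similar.

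The proof is essentially bookkeeping, so there is no serious obstacle; the only point that requires care is ensuring that the filtration statement $\Delta_{\SL_2}(\O_q(\SL_2)_{\leq n}) \subseteq \O_q(\SL_2)_{\leq n} \ot \O_q(\SL_2)_{\leq n}$ is used correctly, which in turn relies on the description of the filtration via matrix coefficients together with Lemma \ref{lem:matrixcoeff}.
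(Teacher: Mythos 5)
Your proof is correct, but the route differs from the paper's. The paper's proof is very brief: it observes that $\VinbqSL$ is generated as an algebra by $\O_q(\SL_2)_{\leq 0} = \C\cdot 1$ and $\O_q(\SL_2)_{\leq 1}z = \text{span}\{az, bz, cz, dz\}$, writes the coproduct explicitly on these generators (e.g.\ $\Delta(az) = az\otimes az + bz\otimes cz$), and declares the remaining verification routine. You instead work directly with the global formula $\Delta(fz^n) = \Delta_{\SL_2}(f)\cdot(z^n\otimes z^n)$ on each filtered piece: you check it lands in the correct subspace using the filtration-preservation statement from the preceding lemma, then reduce the coalgebra axioms and bialgebra compatibility to the corresponding facts for $\O_q(\SL_2)$, with centrality of $z$ handling the regrouping. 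Your approach is essentially the quantum specialization of the argument in Lemma~\ref{lem:Rees}; what it buys is a verification that applies uniformly to any $G$ (and indeed has already been recorded in Proposition~\ref{prop:qvinberg}), while the paper's generator-based approach is more concrete and makes the coproduct formulas immediately visible. One small mislabel: you cite Lemma~\ref{lem:Rees} (the classical case) as the precedent; Proposition~\ref{prop:qvinberg} is the quantum statement that most directly subsumes this, though your argument does not actually rely on the classical lemma, so this does not affect correctness.
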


\begin{proof} The proof is a routine computation. Note that $\VinbqSL$ is generated as an algebra by $\O_q(\SL_2)_{\leq 0} = \C\cdot 1$ and $\O_q(\SL_2)_{\leq 1} z = \{ az, bz, cz, dz\}$. We set $\Delta(az) = az \otimes az + bz \otimes cz$, $\Delta(bz) = az \otimes bz + bz \otimes dz$, etc.  \end{proof}

\begin{rmk} The algebra $\VinbqSL$ does not have an antipode. If it did, the antipode $S(z)$ of $z$ would satisfy $1 = \epsilon(z) = zS(z)$, but $z$ is not invertible in $\VinbqSL$. \end{rmk}

\begin{prop}\label{prop:vinb=matrix} There is an isomorphism of bialgebras $\O_q(\VinbSL) \simeq \O_q(\Mat_2)$.\end{prop}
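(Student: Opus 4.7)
The natural candidate is the algebra map $\Phi : \O_q(\Mat_2) \to \O_q(\VinbSL)$ determined on generators by $a \mapsto az$, $b \mapsto bz$, $c \mapsto cz$, $d \mapsto dz$. My first step is to verify that $\Phi$ is well-defined by checking that the elements $az, bz, cz, dz \in \O_q(\SL_2)[z]$ satisfy the defining relations of $\O_q(\Mat_2)$. Since $z$ is a central formal variable, each relation follows from the corresponding relation in $\O_q(\SL_2)$ multiplied through by $z^2$; for instance $(az)(bz) = ab\,z^2 = qba\,z^2 = q(bz)(az)$, and the $ad-da$ relation is handled similarly. Under $\Phi$, the quantum determinant $D_q = ad-qbc$ is sent to $z^2$, an element of $\O_q(\VinbSL)$.

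Next, I check that $\Phi$ respects the coalgebra structure by comparing coproducts and counits on the four generators. This is immediate from the explicit formula for $\Delta$ on $\O_q(\VinbSL)$ given in the preceding proposition: for example, $(\Phi \otimes \Phi)(\Delta_{\O_q(\Mat_2)}(a)) = az \otimes az + bz \otimes cz = \Delta_{\VinbqSL}(az)$, and similarly for the other three generators. The counit identities are even more immediate, so $\Phi$ is a bialgebra homomorphism.

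For surjectivity, the $\SL_2$-version of Lemma \ref{lem:qleqn} says that $\O_q(\SL_2)_{\leq n}z^n$ is spanned by elements of the form $w \cdot z^n$ with $w$ a word of length $k$ in $a,b,c,d$ satisfying $k \leq n$ and $k \equiv n \pmod{2}$. Writing $w \cdot z^n = (w \cdot z^k)(z^{n-k})$ and using that $z^2 = (az)(dz) - q(bz)(cz)$ already lies in the image of $\Phi$ exhibits each such spanning element inside $\Phi(\O_q(\Mat_2))$.

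Injectivity is the delicate step, and I expect it to be the main obstacle. Give $\O_q(\Mat_2)$ its standard grading (each of $a,b,c,d$ in degree $1$) and $\O_q(\VinbSL)$ the Rees grading, so that $\Phi$ becomes graded. The iterated Ore extension structure on $\O_q(\Mat_2)$ yields a PBW basis and hence $\dim \O_q(\Mat_2)_n = \binom{n+3}{3}$; on the other hand, the Peter--Weyl decomposition gives
\[
\dim \O_q(\SL_2)_{\leq n} = \sum_{\substack{0 \leq m \leq n \\ m \equiv n \pmod{2}}} (m+1)^2,
\]
and a short elementary calculation confirms that this sum also equals $\binom{n+3}{3}$. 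Combined with the graded surjectivity established above, this forces $\Phi_n$ to be an isomorphism for every $n$. An alternative argument avoids the combinatorial identity entirely: a nonzero homogeneous element of $\ker \Phi$ of degree $n$ would have to lie in the two-sided ideal $(D_q - 1) \subseteq \O_q(\Mat_2)$, and a direct degree analysis using that $D_q$ is a central non-zero-divisor of degree $2$ shows that $(D_q-1)$ contains no nonzero homogeneous elements.
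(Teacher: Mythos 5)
Your proposal is correct and follows essentially the same approach as the paper: define $\Phi$ on the generators $a,b,c,d$, verify the $\O_q(\Mat_2)$ relations hold for $az,bz,cz,dz$, use the word-length description of $\O_q(\SL_2)_{\leq n}$ (Lemma~5.8, part 3) together with $z^2 = (az)(dz)-q(bz)(cz)$ to get surjectivity, and match coproducts on generators. The one place you go beyond the paper is injectivity: the paper simply asserts that ``the relations and coproduct on $\O_q(\VinbSL)$ coincide with those of $\O_q(\Mat_2)$,'' whereas you actually prove it, both via the graded dimension count $\sum_{m\leq n,\ m\equiv n\ (2)}(m+1)^2 = \binom{n+3}{3}$ and via the cleaner observation that the ideal $(D_q-1)$ contains no nonzero homogeneous elements (using that $D_q$ is a central non-zero-divisor of degree $2$). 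Either of these closes a gap the paper leaves implicit, and the second argument in particular is worth keeping, since it avoids the PBW basis for $\O_q(\Mat_2)$ and generalizes more readily.
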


\begin{proof} The proof is analogous to the proof in the classical case. By Lemma \ref{lem:qleqn}, an element of $\O_q(SL_2)_{\leq n}$ can be represented by a word in $a,b,c,d$ of length $k$ where $k \leq n$ (in the usual partial order on $\Z$) and $k \equiv n \mod 2$. Similarly, an element of $\O_q(SL_2)_{\leq n}z^n$ can be represented by such a word together with a factor of $z^n$. Using the commutation relations between the generators of $\Oqsl$ and the relation relation $z^2 = (az)(dz) -q(bz)(cz),$ one sees that such a word lies in the span  of the words $(az)^{k_1^\prime} (bz)^{k_2^\prime} (cz)^{k_3^\prime} (dz)^{k_4^\prime} $ with  $k_1^\prime+k_2^\prime + k_3^\prime + k_4^\prime  \leq n$ (in the usual partial order on $\Z$). Hence, $\O_q(\VinbSL)$ is generated by the elements $az$, $bz$, $cz$, $dz$. The relations and coproduct on $\O_q(\VinbSL)$ coincide with those of $\O_q(\Mat_2)$. \end{proof}

Observe that $\VinbqSL$ contains $z^2$, but does not contain $z$. The following corollary can be compared to the decomposition in Equation \ref{eq:SL2orbits} of Section \ref{subsec:sl2}. 
 
\begin{cor} There are isomorphisms of algebras  $$(\O_q(\VinbSL)[(z^2)\inv])^{\C^\times} \simeq \Oqsl \qquad \text{and} \qquad  \O_q(\VinbSL)/ (z^2) \simeq \P^1_q \times \P^1_q.$$
\end{cor}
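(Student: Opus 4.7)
Both isomorphisms will be obtained from computations inside $\O_q(\Mat_2)$ via the identification $\O_q(\VinbSL) \simeq \O_q(\Mat_2)$ of Proposition \ref{prop:vinb=matrix}. Under this identification, $az, bz, cz, dz$ correspond to $a, b, c, d$, and the element $z^2$ is identified with the central quantum determinant $D_q = ad - qbc$, since $(az)(dz) - q(bz)(cz) = (ad - qbc)\, z^2 = z^2$ using the relation $ad - qbc = 1$ in $\O_q(\SL_2)$.

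The second isomorphism then follows immediately. Quotienting by $(z^2)$ gives $\O_q(\VinbSL)/(z^2) \simeq \O_q(\Mat_2)/(D_q) = \O_q(\Mat_2)/(ad-qbc)$, and the proof of Proposition \ref{prop:qassgrad} already identifies this algebra with $\bigoplus_k \Sym_q^k(\C^2) \ot \Sym_q^k(\C^2) = \P^1_q \times \P^1_q$ via $a \mapsto x \ot u$, $b \mapsto x \ot w$, $c \mapsto y \ot u$, $d \mapsto y \ot w$. The only remaining task is to check that the Rees $\Z$-grading on $\O_q(\VinbSL)/(z^2)$ matches the total-degree grading on $\P^1_q \times \P^1_q$, which is clear because the four generators all lie in Rees degree $1$.

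For the first isomorphism, combining the identification above with Definition \ref{def:oqsl2} yields $\O_q(\VinbSL)[(z^2)^{-1}] \simeq \O_q(\Mat_2)[D_q^{-1}] = \O_q(\GL_2)$. The $\C^\times$-action induced by the Rees $\Z$-grading on $\O_q(\VinbSL)$ corresponds to the grading on $\O_q(\GL_2)$ in which $a, b, c, d$ each have degree $1$ and $D_q^{-1}$ has degree $-2$; the invariants are thus the degree-zero subalgebra. I would obtain the isomorphism with $\O_q(\SL_2)$ by restricting the canonical surjection $\O_q(\GL_2) \twoheadrightarrow \O_q(\GL_2)/(D_q - 1) = \O_q(\SL_2)$ to this invariant subalgebra, using centrality of $D_q$ to normalize lifts.

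The hard part will be verifying that this restricted map is an isomorphism. Injectivity amounts to checking that no nonzero homogeneous element of $\O_q(\GL_2)$ lies in $(D_q - 1)$, which follows from the inhomogeneity of $D_q - 1$. Surjectivity is more delicate: one must produce, for each element of $\O_q(\SL_2)$, a degree-zero lift in $\O_q(\GL_2)$. Monomials of even total length in $a, b, c, d$ lift by multiplication by an appropriate power of $D_q^{-1}$; monomials of odd length require using the relation $ad - qbc = 1$ in $\O_q(\SL_2)$ to re-express them so that a compatible degree-zero representative exists, and verifying that this normalization is independent of the chosen rewriting modulo the ideal $(D_q - 1)$.
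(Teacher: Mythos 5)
Your treatment of the second isomorphism is correct and essentially the paper's: both pass through the identification $\O_q(\VinbSL)\simeq\O_q(\Mat_2)$, $z^2\leftrightarrow D_q$, and then invoke the computation $\O_q(\Mat_2)/(D_q)\simeq\P^1_q\times\P^1_q$ already carried out in Proposition~\ref{prop:qassgrad}.

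For the first isomorphism, the injectivity step you sketch does go through, but the surjectivity step---the one you rightly call the hard part---genuinely fails, and no choice of ``rewriting'' can repair it. The degree-zero subalgebra of $\O_q(\GL_2)$ is spanned by elements $p\,D_q^{-m}$ with $p$ homogeneous of length $2m$ in $a,b,c,d$; under $D_q\mapsto 1$ its image is the union $\bigcup_m\O_q(\SL_2)_{\leq 2m}$ of the \emph{even} filtered pieces, i.e.\ the Hopf subalgebra spanned by matrix coefficients of the odd-dimensional modules $\cV_{2m}$. An odd-length element such as $a$ is \emph{not} in that image: the relation $D_q-1 = ad-qbc-1$ changes the length of any monomial by $0$ or $\pm2$, so parity of length is preserved, and the same graded-piece bookkeeping you use for injectivity, run in reverse, shows that $a\equiv p\pmod{(D_q-1)}$ with $p$ homogeneous of even degree would force the cofactor $g$ in $a-p=g(D_q-1)$ to have infinitely many nonzero graded components. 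So the map you construct is an isomorphism onto a \emph{proper} subalgebra of $\O_q(\SL_2)$.

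This is in fact consistent with the paper's own classical computation: Section~\ref{subsec:sl2} identifies $\Spec\bigl(\O(\Vinb_{\SL_2})[(z^2)^{-1}]^{\C^\times}\bigr)=\GL_2/\C^\times=\PSL_2$, so the classical invariant ring is $\O(\PSL_2)$, not $\O(\SL_2)$. The one-line step ``$\O_q(\GL_2)^{\C^\times}=\O_q(\SL_2)$'' in the paper's proof glosses over exactly the point at which your argument breaks, and the first isomorphism in the corollary appears to be misstated: the target should be the even Hopf subalgebra $\bigcup_m\O_q(\SL_2)_{\leq 2m}$ (the natural candidate for $\O_q(\PSL_2)$), not $\O_q(\SL_2)$ itself. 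In short, the gap you sensed in your final paragraph is real, but it is a defect of the claim rather than of your method; with the corrected target your strategy works verbatim.
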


\begin{proof} Under the isomorphism of Proposition \ref{prop:vinb=matrix}, the element $z^2 \in \O_q(\VinbSL)$ corresponds to the element $D_q \in \O_q(\Mat_2)$. Thus,
$$(\O_q(\VinbSL)[(z^2)\inv])^{\C^\times} = (\O_q(\Mat_2)[D_q\inv])^{\C^\times} = \O_q(\GL_2)^{\C^\times} = \Oqsl.$$ It is straightforward to show that $\VinbqSL/(z^2) = \gr(\Oqsl),$ and hence the second isomorphism is a consequence of Proposition \ref{prop:qassgrad}. \end{proof}

\bibliographystyle{plain}

\begin{thebibliography}{bib}


\bibitem[AZ]{ArtinZhang}
M.~Artin, J.~Zhang.
\newblock Noncommutative projective schemes. 
\newblock {\em Advances in Mathematics}, 109(2):228-287, 1994.

\bibitem[BaKr]{BaKr}
E.~Backelin, K.~Kremnitzer.
\newblock Quantum flag varieties, equivariant quantum ${D}$-modules, and
  localization of quantum groups.
\newblock {\em Advances in Mathematics}, 203(2):408--429, 2006.

\bibitem[BBJ]{BBJ} D.~Ben-Zvi, A.~Brochier, D.~Jordan. Integrating Quantum Groups over Surfaces: Quantum Character Varieties and Topological Field Theory. Preprint: arXiv:1501.04652.

\bibitem[BN1]{BZNHarishChandra} 
D.~Ben-Zvi, D.~Nadler. 
\newblock Beilinson-Bernstein localization over the Harish-Chandra center. Preprint: arXiv:1209.0188. 

\bibitem[BN2]{BZNCharacter} D.~Ben-Zvi and D.~Nadler. The character theory of a complex group. Preprint: arXiv:0904.1247.

\bibitem[BeKa]{BezKazhdan}
R.~Bezrukavnikov, D.~Kazhdan.
\newblock Geometry of second adjointness for p-adic groups.
\newblock {\em Representation Theory}, 19, 299--332, 2015.

\bibitem[BFO]{BFO}
R.~Bezrukavnikov, M.~Finkelberg, and V.~Ostrik.
\newblock Character {$D$}-modules via {D}rinfeld center of {Harish-Chandra}
  bimodules.
\newblock {\em Inventiones Mathematicae}, 188(3):589--620, 2012.

\bibitem[Bo]{Bouthier}
A.~Bouthier. Dimension des fibres de Springer affines pour les groupes. 
\newblock {\em Transformation Groups}, 20(3):615--663, 2015. 

\bibitem[Br]{Brion} M.~Brion. \newblock The total coordinate ring of a wonderful variety. \newblock {\em Journal of Algebra}, 313(1):61--99, 2007.

\bibitem[BG]{BrownGoodearl} K.A.~Brown, K.R.~Goodearl. \newblock Lectures on Algebraic Quantum Groups. \newblock{\em Springer Verlag}, 2002.


\bibitem[CP]{ChariPressley} Chari and Pressley. \newblock A guide to quantum groups. {\em Cambridge University Press}, 1995.

\bibitem[DCP]{DCP}
C.~De~Concini and C.~Procesi.
\newblock Complete symmetric varieties.
\newblock In {\em Invariant theory (Montecatini, 1982)}, volume 996 of {\em  Lecture Notes in Mathematics}, pages 1--44. Springer, 1983.

\bibitem[DCS]{DCS} C.~De~Concini and T.~A.~Springer. \newblock Compactification of symmetric varieties. \newblock {\em Transformation Groups} 4, (2-3):273--300, 1999.

\bibitem[DG]{DrinfeldGaitsgory}
V.~Drinfeld and D.~Gaitsgory.
\newblock Geometric constant term functor(s), arxiv:1311.2071v2, 2015.

\bibitem[ENV]{ENV}
M.~Emerton, D.~Nadler, and K.~Vilonen. 
\newblock A geometric {J}acquet functor.
\newblock {\em Duke Math. J.}, 125(2):267--278, 2004.

\bibitem[EJ]{EvensJones} S.~Evens and B.~Jones.  \newblock On the wonderful compactification.\newblock arXiv:0801.0456 

\bibitem[GZ]{GZ} A.~Giaquinto, J.~Zhang. \newblock Quantum Weyl algebras. \newblock {\em J. Algebra}, 176 (1995), 861--881.

\bibitem[Gi1]{GinzburgAMSS} V.~Ginzburg. \newblock Admissible modules on a symmetric space, in {\em Orbites unipotentes et reprsentations III,} volume 173-174 of Astrisque, pages 199--205, 1989.

\bibitem[Gi2]{Ginzburg}
V.~Ginzburg. 
\newblock Harish-Chandra bimodules for quantized Slodowy slices.
\newblock {Representation Theory}, 13, 236-271, 2009.

\bibitem[Ha]{Hartshorne} R.~Hartshorne. \newblock {\em Algebraic Geometry}. \newblock Springer, 1977.

\bibitem[He]{He} X.~He. \newblock The character sheaves on the group compactification. \newblock {\em Advances in Mathematics},  207 (2), 805--827, 2006.

\bibitem[H]{Humphreys:1975cr} J.~Humphreys. \newblock {\em Linear Algebraic Groups}. \newblock Springer, 1975.

\bibitem[Jo1]{Jordanelliptic} D.~Jordan. \newblock Quantum D-modules, elliptic braid groups and double affine Hecke algebras. \newblock {\em Int. Math. Res. Not.}, 11,  2081--2105, 2009.

\bibitem[K]{Kassel:1995uq} C.~Kassel. \newblock {\em Quantum Groups}. \newblock Springer-Verlag, 1995.

\bibitem[KS]{KlimykSchmudgen} A.~Klimyk and K.~Schm\"udgen \newblock {\em Quantum Groups and Their Representations}. \newblock Springer-Verlag, 1997.

\bibitem[Lun]{Luna} D. Luna, Toute vari\'et\'e magnifique est sph\'erique, {\em Transform. Groups}, 1, 245--258, 1996.

\bibitem[LY1]{LY1} J.~Lu and M.~Yakimov. \newblock On a class of double cosets in reductive algebraic groups. {\em Int. Math. Res. Notices}, 13, 761--797, 2005.

\bibitem[LY2]{LY2} J.~Lu and M.~Yakimov. \newblock Partitions of the wonderful group compactification. \newblock {\em Transformation Groups},  12(4), 695--723, 2007.

\bibitem[LR]{LR} V.~Lunts, A.~Rosenberg. \newblock Localization for quantum groups. {\em Selecta Mathematica}, 5 (1), 123--159, 1999.

\bibitem[Lu]{Lusztig:2006} \newblock G.~Lusztig. \newblock Character sheaves and generalizations, in {\em The unity of mathematics}, volume 244 of {\em Progr. Math.}, pages 443--455, Birkh\"auser, 2006.


\bibitem[MT]{MartensThaddeus}
J.~Martens and M.~Thaddeus.
\newblock Compactifications of reductive groups as moduli stacks of bundles. {\em Compositio Mathematica}, 152 (1), 62--98, 2016.


\bibitem[Pol]{Polishchuk}
A.~Polishchuk, Noncommutative Proj and coherent algebras. {\em Mathematical Research Letters}, 12, 63--74, 2005. 

\bibitem[Pop]{Popescu}
N.~Popescu, Abelian Categories with Applications to Rings and Modules, L.M.S. Monographs, Academic Press, 1973.
   
\bibitem[P]{popov} V.~L.~Popov. 
\newblock Contraction of the actions of reductive algebraic groups. {\em Mat. Sb. (N.S.)}, 130(172):3(7). 

\bibitem[SV]{SekVenkatesh}
Y.~Sekallaridis and A.~Venkatesh.
\newblock Periods and harmonic analysis on spherical varieties. 
\newblock Preprint: arXiv:1203.0039.

\bibitem[Se]{Serre}
J.~P.~Serre. Faisceaux Alg\'ebriques Coh\'erents. The Annals of Mathematics, 2nd Ser., Vol. 61, No. 2. (Mar., 1955), pp. 197-278.

\bibitem[Sp]{Springer}
T.~Springer. Some results on compactifications of semisimple groups, in International Congress of Mathematicians,
Vol. II, Progr. Math., pages 1337--1348, European Math Soc., 2006.

\bibitem[T]{Tanisaki}
T.~Tanisaki
\newblock The Beilinson-Bernstein correspondence for quantized enveloping algebras.
\newblock {\em Mathematische Zeitschrift}, 250(2): 299--361, 2005. 


\bibitem[Vi]{Vinberg} E.~B. Vinberg. \newblock On reductive algebraic semigroups. \newblock In S.~G. Gindikin and E.~B. Vinberg, editors, {\em Lie groups and Lie   algebras: E.B. Dynkin's seminar}. American Mathematical Society, 1995.

\bibitem[Vi2]{VinbergAsym}   E.~B.~Vinberg.  \newblock The asymptotic semigroup of a semisimple Lie group. In {\em Semigroups in Algebra, Geometry and Analysis}. Walter de Gruyter, 1995.

\end{thebibliography}

\end{document}